\title{Stable left and right Bousfield localisations}
\author{David Barnes}
\address{David Barnes \\ The University of Sheffield \\ School of Mathematics and Statistics \\ Hicks Building \\  Sheffield \\ S3 7RH \\ UK
                }
\email{D.J.Barnes@sheffield.ac.uk}
\thanks{The first author was supported by EPSRC grant EP/H026681/1} 
\author{Constanze Roitzheim}
\address{Constanze Roitzheim \\ University of Glasgow \\ Department of Mathematics \\ 15 University Gardens \\ Glasgow \\ G12 8QW \\ UK
        }
\email{constanze.roitzheim@glasgow.ac.uk}
\thanks{The second author was supported by EPSRC grant EP/G051348/1.}
\date{$24^\text{th}$ April 2012}
\DeclareMathOperator{\id}{Id}
\DeclareMathOperator{\Ho}{Ho}
\DeclareMathOperator{\End}{End}
\DeclareMathOperator{\Hom}{Hom}
\DeclareMathOperator{\Map}{Map}
\DeclareMathOperator{\SSet}{sSet}
\DeclareMathOperator{\sing}{sing}
\DeclareMathOperator{\rightmod}{mod--}
\DeclareMathOperator{\cell}{Cell}
\DeclareMathOperator{\ch}{Ch}
\newcommand{\Sp}{\mathcal{S}}
\newcommand{\MSp}{\mathcal{MS}}
\newcommand{\C}{\mathcal{C}}
\newcommand{\D}{\mathcal{D}}
\newcommand{\co}{\colon \!}
\newcommand{\lradjunction}{\,\,\raisebox{-0.1\height}{$\overrightarrow{\longleftarrow}$}\,\,}
\newcommand{\rladjunction}{\,\,\raisebox{-0.1\height}{$\overleftarrow{\longrightarrow}$}\,\,}
\newcommand{\smashprod}{\wedge}
\newcommand{\sphspec}{{ \mathbb{S} }}
\newtheorem{theorem}{Theorem}[section]
\newtheorem{proposition}[theorem]{Proposition}
\newtheorem{corollary}[theorem]{Corollary}
\newtheorem{lemma}[theorem]{Lemma}
\newtheorem{definition}[theorem]{Definition}
\newtheorem{ex}[theorem]{Example}
\newtheorem{rmk}[theorem]{Remark}
\begin{document}

\begin{abstract}
\noindent 
We study left and right Bousfield localisations of stable
model categories which preserve stability. This follows the lead of
the two key examples:
localisations of spectra with respect to a homology theory and  $A$-torsion
modules over a ring $R$ with $A$ a perfect $R$-algebra. We exploit stability
to see that the
resulting model structures are technically far better behaved than the
general case.
We can give explicit sets of generating cofibrations, show that these
localisations preserve properness and give a complete characterisation
of when they preserve monoidal structures. We apply these results to
obtain convenient assumptions under which a stable model category is
spectral. We then use Morita theory to gain an insight into the nature of right localisation and its homotopy category. We finish with a  correspondence between left and
right localisation.

\end{abstract}

\maketitle 

\section*{Introduction}
Localisations of homotopy theories are one of the most useful techniques in
the tool-kit of an algebraic topologist. Bousfield introduced this concept
by studying topological spaces up to $E_*$-equivalence for $E_*$ a homology theory. This became known as \emph{left Bousfield localisation}. 
Later, the dual concept known as \emph{cellularisation}, or \emph{right Bousfield localisation}, was developed. 
Dwyer and Greenlees discuss this in the context of an algebraic case in \cite{DwyGre02}, where they consider $A$-torsion modules in the case of $A$ a perfect algebra over the ring $R$.

As these two notions of localisation and cellularisation were studied, it became clear that they were best phrased in the language of model categories. 
It was therefore natural to ask if localisation or cellularisation can be performed in a general model category.
A good answer to this was given by Hirschhorn in the book \cite{Hir03}, which discusses general existence questions as well as studying technical properties of left and right localisations.
Left localisation and right localisation are dual notions, 
but the main results of the book are not dual. 
This creates some very interesting differences in the behaviour of
left and right localisations. 

In this paper we restrict our view to stable model categories. Both main examples of $E_*$-localisation of spectra and $A$-torsion $R$-modules are cases of localisations of stable model categories where the result is still stable. However, in general stability is a condition not always preserved by left or right localisation. We begin our work by finding simple necessary and sufficient conditions for the preservation of stability, thus obtaining the notion of \emph{stable left and right localisations} of stable model categories.
In this setting we show that left and 
right localisations are much more tractable than in the general case. 
We are able to see that a right localisation is still cellular
and obtain small and explicit sets of generating cofibrations
and generating acyclic cofibrations
for both left and right localisations. 
Furthermore we see that properness is preserved by left and right localisations.
This is a considerable advantage over the non-stable case as generally, such convenient sets of generating cofibrations and acyclic cofibrations cannot be found. Furthermore, our approach can be viewed as an improvement on the existence results of left and right localisation as the stable case requires fewer technical assumptions on the original model category.

We then exploit our new description of the generating sets
to see that monoidal structures interact very well with localisations
of stable model categories. We then return to the motivating example of
spectra and see that left localisations behave extremely well and 
are easily made stable and monoidal. We also obtain an even simpler set of generating cofibrations and acyclic cofibrations. Dually, we can use our tools to deduce that the category of $A$-torsion modules is a monoidal model category.

One further interesting application of the stable setting is that we are now able to prove that any stable, proper and cellular model category is Quillen equivalent to a spectral model category. Since we now know that stable left localisation preserves properness we are able to combine existing results to obtain a sleeker and more tractable answer than previous results along these lines.

We continue by using Morita theory to show that for a set of homotopically compact objects $K$, right localisation with respect to $K$ is Quillen equivalent to modules over the endomorphism ringoid spectrum of $K$. This shows that the $K$-colocal homotopy category $\Ho(R_K\C)$ of $\C$ is the smallest localising subcategory of $\Ho(\C)$ containing $K$ and provides an explicit description of cellularisation. 

We further show that for any left localisation there is a corresponding right localisation governing the acyclics of this left localisation and vice versa. This allows us to restate the Telescope Conjecture in chromatic homotopy theory in terms of right localisations.

\bigskip
Our results regarding properness, existence and monoidality of left and right localisations as well as their applications show that stable localisations of stable model categories have vast advantages over the general case. Furthermore we have shown that right localisations are not to be dreaded and 
hope that our work will encourage others to use this powerful technique. 

\bigskip

This paper is organised as follows. In Section \ref{sec:examples} we establish the notions of left and right Bousfield localisations of model categories. We then discuss some standard examples, namely localisation of spectra with respect to a homology theory and $A$-torsion $R$-modules where $A$ is a perfect algebra over the commutative ring $R$.

In Section \ref{sec:modelcats} we recall some definitions in the context of model categories, namely stability, framings, properness and cofibrant generation. These technical definitions will play a crucial role to our work.

Section \ref{sec:stableleftloc} contains the first key results concerning left Bousfield localisation $L_S\C$. We define what it means for a set of maps $S$ to be stable and then show that under the assumption of stability of $S$, localisation preserves stability and properness. Further, we give a simple set of generating cofibrations and acyclic cofibrations for $L_S\C$.
Section \ref{sec:stablerightloc} deals with analogous results for the dual case of right Bousfield localisation $R_K\C$, where $K$ is a set of objects of $\C$. 

The following pair of sections, \ref{sec:monoidalleft} and \ref{sec:monoidalright}, examine the interaction of  left and right localisations with monoidal structures. More specifically, for a monoidal model 
category $\C$ we give necessary and sufficient conditions on $S$ and $K$ so that $L_S\C$ and $R_K\C$ are again monoidal model categories and prove some universal properties. We also apply our results to the leading examples of spectra and $A$-torsion $R$-modules. 

Section \ref{sec:spectral} uses the fact that stable left localisations preserve properness to obtain convenient conditions under which a stable model category is Quillen equivalent to a spectral one.

In Section \ref{sec:morita}, we use the Morita theory of Schwede and Shipley to gain further insight into right localisations when the object set $K$ consists of homotopically compact objects. 
In particular we are able to generalise the results of \cite{DwyGre02}
and obtain conditions under which a right localisation at a set of objects $K$ 
is Quillen equivalent to a left localisation at a set of maps $S$. 

Finally, in section \ref{sec:correspondence} we update the important correspondence between 
cellularisations and acyclicisations to the language of left and right localisation 
by comparing colocal objects to acyclic objects, leading to an alternative description of the Telescope Conjecture.

\bigskip
We would like to thank Denis-Charles Cisinski and John Greenlees for motivating conversations.

\section{Examples of left and right Bousfield localisation}\label{sec:examples}

Let $E_*$ be a generalised homology theory. In the 1970s Bousfield considered the resulting homotopy categories of spaces and spectra after inverting $E_*$-isomorphisms rather than $\pi_*$-isomorphisms. Those homotopy categories are especially sensitive with respect to phenomena related to $E_*$.  To talk about these constructions in a set-theoretically rigid manner, they were increasingly placed in a model category context in the subsequent decades. We are going to recall some definitions and results in this section.

\begin{definition}
A map $f \co X \to Y$ of simplicial sets or spectra is an \textbf{$E$-equivalence} if $E_*(f)$ is an isomorphism.
A simplicial set or a spectrum $Z$ is \textbf{$E$-local} if $$f^* \co [Y,Z] \to [X,Z]$$ is an isomorphism for
all $E$-equivalences $f \co X \to Y$. A simplicial set of spectrum $A$ is \textbf{$E$-acyclic} if
$[A,Z]$ consists of only the trivial map, for all $E$-acyclic $Z$.
An $E$-equivalence from $X$ to an $E$-local object $Z$ is called an \textbf{$E$-localisation}.
\end{definition}

This then gives rise to the following, see \cite{Bou75} and \cite{Bou79}.

\begin{theorem}\label{thm:eloc}
Let $E$ be a homology theory and $\C$ be the category of simplicial sets
or spectra. Then there is a model structure $L_E\C$ on $\C$ such that
\begin{itemize}
\item the weak equivalences are the $E_*$-isomorphisms,
\item the cofibrations are the cofibrations $\C$,
\item the fibrations are those maps with the right lifting property with respect to 
cofibrations that are also $E_*$-isomorphisms.
\end{itemize}
\end{theorem}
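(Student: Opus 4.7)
The plan is to realise this model structure as a left Bousfield localisation of the standard (Kan--Quillen or Bousfield--Friedlander) model structure on $\C$ at a suitable set of maps $S$. Both flavours of $\C$ are proper and cellular, so Hirschhorn's general existence theorem for left Bousfield localisations applies and produces, for any set $S$, a new model structure $L_S\C$ whose cofibrations agree with those of $\C$ and whose fibrations are characterised via the right lifting property as in the statement. The content of the theorem is therefore that one can choose such an $S$ for which the class of $S$-local equivalences coincides with the class of $E_*$-equivalences.

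The main obstacle, and the heart of the argument, is that $E_*$-equivalences form a proper class rather than a set. This is resolved by Bousfield's cardinality argument. Since $E_*$ commutes with filtered colimits, there is a regular cardinal $\kappa$ (depending on $E$) such that every $E$-acyclic object is a filtered colimit of its $\kappa$-small $E$-acyclic sub-objects, and analogously every $E_*$-equivalence is a suitable filtered colimit of $E_*$-equivalences between $\kappa$-small cofibrant objects. One then defines $S$ to be a set of representatives for the isomorphism classes of $E_*$-equivalences between $\kappa$-small cofibrant objects.

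With $S$ fixed, one checks the two necessary comparisons. Every $E_*$-equivalence is manifestly $S$-local, because an $E$-local object $Z$ sees any $E_*$-equivalence as an isomorphism on mapping spaces. Conversely, an $S$-local object is $E$-local: any $E_*$-equivalence $f$ is a filtered colimit of maps in $S$, so $f^*$ is a filtered limit of isomorphisms on $[-,Z]$ and hence an isomorphism, whence $S$-local equivalences between cofibrant objects are precisely $E_*$-equivalences.

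Applying Hirschhorn's existence theorem to this $S$ then yields the model structure $L_S\C$ with the three classes as described, and the preceding identification shows $L_S\C = L_E\C$. The only genuinely hard step is the cardinality argument; the remainder consists of assembling it with standard left Bousfield localisation machinery.
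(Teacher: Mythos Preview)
The paper does not actually prove this theorem; it simply attributes it to Bousfield via the citations \cite{Bou75} and \cite{Bou79}. Immediately afterwards the paper remarks that the result can be viewed as an instance of Hirschhorn's general left Bousfield localisation theorem once one produces a \emph{set} $S$ whose $S$-equivalences are exactly the $E_*$-isomorphisms, and points to \cite[VIII.1]{EKMM} for how this is done for $\mathbb{S}$-modules. Your outline is therefore precisely the modern reformulation the paper alludes to, so in spirit you are aligned with the paper's treatment.

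That said, the argument as you have written it has some wobbles worth fixing. The sentence ``Every $E_*$-equivalence is manifestly $S$-local'' is a type error---presumably you mean $S$-equivalence---and even then the justification you give (that an $E$-local $Z$ sees $E_*$-equivalences as isomorphisms) only works once you already know that every $S$-local object is $E$-local, which is the hard direction you establish in the \emph{next} sentence; the logic is inverted. More substantively, Bousfield's cardinality argument does not show that an arbitrary $E_*$-equivalence is a filtered colimit of maps in $S$; it shows that every $E_*$-acyclic \emph{cofibration} is a transfinite composition of pushouts of elements of $S$. This matters because mapping spaces $\Map(-,Z)$ do not convert filtered colimits in the source into filtered limits of simplicial sets, so the step ``$f^*$ is a filtered limit of isomorphisms on $[-,Z]$'' does not go through as stated. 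Finally, even granting that $S$-local $=$ $E$-local, the implication $S$-equivalence $\Rightarrow$ $E_*$-equivalence is not automatic: it requires knowing that the $S$-fibrant replacement $X \to L_S X$ is itself an $E_*$-equivalence, which one argues by observing that it is built via the small object argument from $J \cup \Lambda S$, all of whose elements are $E_*$-equivalences, and then invoking 2-out-of-3. None of these are fatal, but they are exactly the places where the Bousfield--Smith argument does real work, and your sketch papers over them.
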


This can be seen as a special case of a more general result by Hirschhorn.
For $X, Y \in \C$, we let $\Map_\C(X,Y)$ denote the homotopy function object, 
which is a simplicial set, see \cite[Chapter 17]{Hir03} and Section \ref{sec:modelcats}.

\begin{definition}\label{def:local}
Let $S$ be a set of maps in $\C$. Then an object $Z \in \C$ is \textbf{$S$-local} if
\[
\Map_\C(s,Z): \Map_\C(B,Z) \longrightarrow \Map_\C(A,Z)
\]
is a weak equivalence in simplicial sets for any
$s: A \longrightarrow B$ in $S$.
A map $f:X \longrightarrow Y \in \C$ is an \textbf{$S$-equivalence} if
\[
\Map_\C(f,Z): \Map_\C(Y,Z) \longrightarrow \Map_\C(X,Z)
\]
is a weak equivalence for any $S$-local $Z \in \C$.
An object $W \in \C$ is \textbf{$S$-acyclic} if
$$
\Map_\C(W,Z) \simeq *
$$
for all $S$-local $Z \in \C$. 
\end{definition}

A \textbf{left Bousfield localisation} of a model category $\C$ with respect to a class of maps $S$ is a new model structure $L_S\C$ on $\C$ such that
\begin{itemize}
\item the weak equivalences of $L_S\C$ are the $S$-equivalences, 
\item the cofibrations of $L_S \C$ are the cofibrations of $\C$,
\item the fibrations of $L_S\C$ are those maps that have the right lifting property with respect to cofibrations that are also $S$-equivalences. 
\end{itemize}

Hirschhorn proves that with some minor assumptions on $\C$, $L_S\C$ exists if $S$ is a set. In the case of homological localisation as in Theorem \ref{thm:eloc} the class $S$ is initially the class of $E_*$-isomorphisms, which is not a set. Hence, the key to proving the existence of homological localisations is to show that there is a \emph{set} $S$ whose $S$-equivalences are exactly the $E_*$-isomorphisms. 

For example, this has been done for spectra, specifically for $\mathbb{S}$-modules in the sense of \cite{EKMM}. In Section VIII.1 they show that there is a set $\mathcal{J}_E$ of generating $E$-acyclic cofibrations, 
that is, a morphism of spectra is an $E$-acyclic fibration if and only if it has the right lifting property with respect to all elements of $\mathcal{J}_E$. This implies that $L_E = L_{\mathcal{J}_E}$ as both localisations then possess the same fibrant objects and in particular the same local objects. 
Similar results exist for symmetric spectra, sequential spectra and orthogonal spectra and their 
equivariant counterparts. 

\bigskip

We are now going to turn to a dual notion, namely right Bousfield localisation.
We should be talking about right localisation with respect to a class of maps, 
however the existence theorem for right localisations, \cite[Theorem 5.1.1]{Hir03},
is not dual. That theorem, which we give in Section \ref{sec:stablerightloc}, starts with a set of objects 
rather than a set of maps. Thus we always word right localisations
in terms of a  set (or class) of objects. 

\begin{definition}
Let $\C$ be a model category and $K$ a class of objects of $\C$.  We say that a map $f: A \longrightarrow B$ of $\C$ is a \textbf{$K$-coequivalence} if
\[
\Map_\C(X,f): \Map_\C(X,A) \longrightarrow \Map_\C(X,B)
\]
is a weak equivalence of simplicial sets for each $X \in K$. An object $Z \in \C$ is \textbf{$K$-colocal} if
\[
\Map_\C(Z,f): \Map_\C(Z,A) \longrightarrow \Map_\C(Z,B)
\]
is a weak equivalence for any $K$-coequivalence $f$. An object $A\in\C$ is \textbf{$K$-coacyclic} if $\Map_\C(W,A) \simeq *$ for any $K$-colocal $W$.
\end{definition}

There are many other similar names for these terms, in particular 
\cite[Definition 5.1.3]{Hir03} uses the term $K$-colocal 
equivalences for $K$-coequivalences.

A \textbf{right Bousfield localisation} of $\C$ with respect to $K$ is a model structure $R_K\C$ on $\C$ such that
\begin{itemize}
\item the weak equivalences are $K$-coequivalences
\item the fibrations in $R_K\C$ are the fibrations in $\C$
\item the cofibrations in $R_K\C$ are those morphisms that have the left lifting property with respect to fibrations that are $K$-coequivalences.
\end{itemize}

When $K$ is a set rather than an arbitrary class, Hirschhorn showed in \cite[Theorem 5.1.1]{Hir03} that, under some assumptions on $\C$, $R_K \C$ exists. This is going to be discussed in more detail later in 
Section \ref{sec:stablerightloc}. 

\bigskip
An example of right Bousfield localisation of modules over a ring $R$ was discussed by Dwyer and Greenlees in \cite{DwyGre02}. A \textbf{perfect} $R$-module $A$ is isomorphic to a differential graded $R$-module of finite length which is finitely generated projective in every degree. This is equivalent to $A$ being small, meaning that $R\Hom_R(A,-)$, the derived functor of $\Hom_R(A,-)$, commutes with arbitrary coproducts.

Dwyer and Greenlees consider right localisation of the category of $R$-modules with respect to $K = \{ A \}$ where $A$ is perfect. In their paper, they call the thus arising $\{A\}$-coequivalences ``$E$-equivalences'', referring to the functor $E(-)=R\Hom_R(A,-)$. The $\{A\}$-colocal objects are referred to as ``$A$-torsion modules''. For example, in the case of $R=\mathbb{Z}$ and $A=( \mathbb{Z} \xrightarrow{\cdot p} \mathbb{Z}) \simeq \mathbb{Z}/p$, an $R$-module $X$ is $p$-torsion if and only if it has $p$-primary torsion homology groups.

\bigskip
In \cite{DwyGre02}, Dwyer and Greenlees also compare this version of right localisation with a dual notion of left localisation. In the same set-up they consider left localisation with respect to the class $S$ of $R\Hom_R(A,-)$-isomorphisms. They call the resulting $S$-local $R$-modules ``$A$-complete''. In their Theorem 2.1 they show that the derived categories of $A$-torsion and $A$-complete modules are equivalent. 
We will provide a generalisation of this type of result in Section \ref{sec:morita}.

\section{Some model category techniques}\label{sec:modelcats}

We will recall some technical facts about stable model categories. The homotopy category of any pointed model category can be equipped with an adjoint functor pair 
\[
\Sigma:  \Ho(\C) \lradjunction \Ho(\C) : \Omega
\]
where $\Sigma$ is called the \textbf{suspension functor} and $\Omega$ the \textbf{loop functor}. 
Let $X \in \Ho(\C)$ be fibrant and cofibrant in $\C$. We factor the map
\[
X \longrightarrow *
\]
into a cofibration and a weak equivalence
\[
X \rightarrowtail X \stackrel{\sim}{\longrightarrow} *.
\]
The suspension $\Sigma X$ of $X$ is defined as the pushout of the diagram
\[
CX \leftarrowtail X \rightarrowtail CX.
\]
Dually the loops on $X$ are defined as the pullback of
\[
PX \twoheadrightarrow X \twoheadleftarrow PX
\]
where 
\[
* \stackrel{\sim}{\longrightarrow} PX \twoheadrightarrow X
\]
is a factorisation of $* \longrightarrow X$ into a weak equivalence and a fibration. For example, in the case of topological spaces this gives the usual loop and suspension functors. For chain complexes 
of $R$-modules, denoted $\ch(R)$, the suspension and loop functors are degree shifts of chain complexes.

\begin{definition}
A model category $\C$ is \textbf{stable} if $\Sigma$ and $\Omega$ are inverse equivalences of categories.
\end{definition}

Thus, topological spaces are not stable whereas $\ch(R)$ is. 

\bigskip

An alternative description of $\Sigma$ and $\Omega$ uses the technique of \textbf{framings} which is a generalisation of the notion of a simplicial model category. Recall that a simplicial model category is a model category that is enriched, tensored and cotensored over the model category of simplicial sets satisfying some adjunction properties \cite[Definition II.2.1]{GJ98}. Further, these functors are supposed to be compatible with the respective model structures on the model category $\C$ and simplicial sets $\SSet_*$. Not every model category can be given the structure of a simplicial model category, but framings at least give a similar structure up to homotopy.
For details, see e.g. \cite[Chapter 5]{Hov99}, \cite[Chapter 16]{Hir03} or \cite[Section 3]{BarRoi11b}. 

Let $\C$ be a pointed model category and $A \in \C$ a fixed object. Framings give adjoint Quillen functor pairs
\[
\begin{array}{rrl}
A \otimes (-)  : & \SSet_* \lradjunction \C & : \Map_l(A,-) \\
A^{(-)} : & \SSet^{op}_* \rladjunction \C & : \Map_r(-,A).
\end{array}
\]
Unfortunately the construction is not rigid enough to equip any model category with the structure of a simplicial model category. The reason for this is that for two fixed objects $A$ and $B$ the above defined ``left mapping space' $\Map_l(A,B)$ and ``right mapping space'' $\Map_r(A,B)$ only agree up to a zig-zag of weak equivalences. However, the above functors possess total derived functors, giving rise to an adjunction of two variables
\[
\begin{array}{r@{\ : \ }ll}
- \otimes^L - & \Ho(\C) \times \Ho(\SSet) \longrightarrow \Ho(\C),  \\
R\Map(-,-) & \Ho(\C)^{op} \times \Ho(\C) \longrightarrow  \Ho(\SSet_*), \\
R(-)^{(-)} &  \Ho(\SSet)^{op} \times \Ho(\C) \longrightarrow \Ho(\C).
\end{array}
\]

\begin{theorem}[Hovey]
Let $\C$ be a pointed model category. Then its homotopy category $\Ho(\C)$ is a $\Ho(\SSet_*)$-module category.
\end{theorem}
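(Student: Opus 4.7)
The plan is to lift the framing construction recalled above to a coherent action on the homotopy category. First I would fix a functorial cosimplicial frame on $\C$, so that each cofibrant $A \in \C$ receives a Reedy cofibrant cosimplicial resolution $A^\bullet$ with $A^0 = A$ whose coface and codegeneracy maps are weak equivalences. Tensoring with a pointed simplicial set $K$ via the coend $A \otimes K := A^\bullet \otimes_\Delta K$ is then a left Quillen bifunctor, and deriving it yields the candidate action
\[
- \otimes^L - \co \Ho(\C) \times \Ho(\SSet_*) \longrightarrow \Ho(\C).
\]

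The unit axiom $X \otimes^L S^0 \cong X$ is immediate on cofibrant objects since $A \otimes \Delta[0]_+ = A^0 = A$, naturally in $A$. The heart of the argument is the associativity isomorphism $X \otimes^L (K \smashprod L) \cong (X \otimes^L K) \otimes^L L$, which must satisfy a pentagon. On the point-set level this fails because $(A \otimes K) \otimes L$ requires a new frame on $A \otimes K$ for which no canonical choice is given. The standard remedy is to introduce bi-cosimplicial (or doubly-framed) resolutions $A^{\bullet,\bullet}$: both iterated tensor products receive natural comparison maps from the diagonal contraction of $A^{\bullet,\bullet}$, and a Reedy argument shows these maps are weak equivalences. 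Their composite, read in the homotopy category, provides the associativity isomorphism.

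The main obstacle is coherence: one must show that all such comparison zig-zags fit together so that the pentagon (and the triangle identity for the unit) commute in $\Ho(\C)$. This reduces to the fact that the category of cosimplicial frames on a fixed cofibrant object is connected modulo weak equivalences, and more generally that bi-frames are essentially unique; both statements are proved by a Reedy induction exploiting that the latching and matching maps of a frame are (acyclic) cofibrations. Given this rigidity at the level of frames, the descent to $\Ho(\C)$ is formal, and naturality of $\otimes^L$ in both variables—inherent in the coend construction—completes the verification that $\Ho(\C)$ is a module category over $\Ho(\SSet_*)$ equipped with its smash-product monoidal structure.
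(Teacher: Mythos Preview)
The paper does not give its own proof of this theorem. It is stated in Section~\ref{sec:modelcats} as background, attributed to Hovey, with the surrounding discussion pointing to \cite[Chapter~5]{Hov99} for the construction of framings and the derived bifunctors $\otimes^L$, $R\Map$, and $R(-)^{(-)}$. So there is no argument in the paper to compare your proposal against.

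That said, your outline is a fair summary of how Hovey actually establishes the result in his book: one fixes functorial (co)simplicial frames, defines the action via the coend, and the substantive work lies in producing the associativity isomorphism using bicosimplicial resolutions together with the essential uniqueness of frames to force coherence. One small caution on wording: calling the framing action a ``left Quillen bifunctor'' overstates what is available in a non-simplicial model category; the full pushout--product axiom in both variables need not hold on the nose. What one has, and what suffices, is that for each cofibrant $A$ the functor $A \otimes (-) \co \SSet_* \to \C$ is left Quillen, and that for each simplicial set $K$ the functor $(-) \otimes K$ preserves enough weak equivalences between cofibrant objects to derive. This is exactly the gap between a framing and a genuine simplicial structure that the paper alludes to when it says the left and right mapping spaces agree only up to a zig-zag of weak equivalences.
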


In particular the homotopy function complex $\Map_\C$ is weakly equivalent to 
$R \Map$. Hence we will abuse notation and only write $\Map$ instead of $R\Map$ or $\Map_\C$.
The suspension and loop functors can also be described using framings, see \cite[Chapter 6]{Hov99}. 

\begin{lemma}
Let $\mathbb{S}^1 \in SSet_*$ denote the simplicial circle. Then
\[
\Sigma X \cong X \otimes^L \mathbb{S}^1 \,\,\mbox{and}\,\,\, \Omega X \cong (RX)^{\mathbb{S}^1}.
\] \qed
\end{lemma}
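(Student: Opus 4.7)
The plan is to transfer the pushout/pullback definitions of $\Sigma$ and $\Omega$ across the framing adjunctions by means of a standard pushout presentation of $\mathbb{S}^1$ in $\SSet_*$. First I would fix a cofibrant replacement $QX$ of $X$ equipped with a cosimplicial frame and a fibrant replacement $RX$ with a simplicial frame, so that $QX \otimes - \co \SSet_* \to \C$ and $(RX)^- \co \SSet_*^{op} \to \C$ become honest left and right Quillen functors, representing $X \otimes^L -$ and $R(-)^{(-)}$ respectively after passing to homotopy categories.

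The key input is the presentation $\mathbb{S}^1 \cong D^1 \cup_{S^0} D^1$ in $\SSet_*$, where $D^1$ denotes $\Delta^1$ pointed at one vertex and $S^0 \hookrightarrow D^1$ is the cofibrant inclusion of the two vertices. Since $QX \otimes -$ is a left adjoint it preserves this pushout, and the framing axioms give $QX \otimes S^0 \cong QX$, while $QX \otimes D^1$ is cofibrant and weakly trivial (since $D^1 \simeq *$ in $\SSet_*$ and left Quillen functors preserve weak equivalences between cofibrant objects). This yields a pushout
\[
QX \otimes \mathbb{S}^1 \cong (QX \otimes D^1) \cup_{QX} (QX \otimes D^1)
\]
in $\C$ that matches the excerpt's definition of $\Sigma(QX) = \Sigma X$, with $QX \otimes D^1$ serving as $CQX$; since all objects are cofibrant and $QX \to QX \otimes D^1$ is a cofibration, this is a genuine homotopy pushout and so represents $X \otimes^L \mathbb{S}^1$ in $\Ho(\C)$. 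Dually, applying the right Quillen functor $(RX)^-$ to the same presentation of $\mathbb{S}^1$ converts the pushout into a pullback $(RX)^{D^1} \times_{RX} (RX)^{D^1}$ in $\C$, which is exactly the pullback defining $\Omega(RX) = \Omega X$.

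The main obstacle is not the formal adjunction manipulation but the initial identifications $QX \otimes S^0 \cong QX$ and $(RX)^{S^0} \cong RX$, together with the verification that $QX \otimes D^1$ and $(RX)^{D^1}$ genuinely play the roles of cones and path objects. These facts are standard consequences of the axiomatic setup of pointed (co)simplicial frames treated in \cite[Chapter 5]{Hov99}, but they constitute the real content of the lemma; once they are in hand, the rest follows formally from the fact that left adjoints preserve colimits and right adjoints preserve limits.
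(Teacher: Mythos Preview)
The paper does not actually prove this lemma: the statement ends with a \qed\ symbol and is preceded only by the sentence ``The suspension and loop functors can also be described using framings, see \cite[Chapter 6]{Hov99}.'' So there is no argument in the paper to compare against; the authors are simply recording a known fact and pointing to Hovey.

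Your sketch is a correct and standard way of unpacking that citation. Presenting $\mathbb{S}^1$ as a pushout $D^1 \cup_{S^0} D^1$ of cofibrant pointed simplicial sets, pushing it through the left Quillen functor $QX \otimes -$, and identifying the result with the cone pushout defining $\Sigma X$ is exactly how one verifies this in practice; the dual argument for $\Omega$ is likewise fine. Your caveat at the end is well placed: the identifications $QX \otimes S^0 \cong QX$, $(RX)^{S^0} \cong RX$, and the fact that $QX \otimes D^1$ and $(RX)^{D^1}$ are genuine cone and path objects are the substantive content, and they are established in \cite[Chapter 5--6]{Hov99}. One small remark: the model of $\mathbb{S}^1$ you build has two non-degenerate $1$-simplices rather than one, so it is not the minimal simplicial circle, but since you are computing a left derived functor any cofibrant model of $\mathbb{S}^1$ will do, and this does not affect the argument.
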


\bigskip
Another model category notion relevant to this paper is properness. This definitions does not seem important at first sight but is crucial to many of the results about the existence of a localisation.

\begin{definition}A model category is \textbf{left proper} if every pushout of a weak equivalence along a cofibration is again a weak equivalence. Dually, it is said to be \textbf{right proper} if every pullback of a weak equivalence along a fibration is again a weak equivalence. It is \textbf{proper} if it is both left and right proper.
\end{definition}

\bigskip
Recall that a model category $\C$ is said to be cofibrantly generated if there are \emph{sets} of maps (rather than classes) that generate the cofibrations and acyclic cofibrations of $\C$. More precisely,

\begin{definition}
A model category $\C$ is \textbf{cofibrantly generated} if there exist sets of maps $I$ and $J$ such that
\begin{itemize}
\item a morphism in $\C$ is a fibration if and only if it has the right lifting property with respect to all elements in $I$,
\item a morphism in $\C$ is an acyclic fibration if and only if it has the right lifting property with respect to all elements in $J$.
\end{itemize}
Further, $I$ and $J$ have to satisfy the \textbf{small object argument}, that is, the domains of the elements of $I$ (and $J$) are small relative to $I$ (respectively $J$).
\end{definition}

For details of smallness and the small object argument see \cite[Section 10.5.14]{Hir03}. The concept of cofibrant generation is crucial to some statements about model categories and in general allows many proofs to be greatly simplified. 

A \textbf{cellular model category} is a cofibrantly generated model category where the generating cofibrations and acyclic cofibrations satisfy some more restrictive properties regarding smallness, see \cite[Definition 12.1.1]{Hir03}. Not every cofibrantly generated model category is cellular, but many naturally occurring model categories are. Examples include simplicial sets, topological spaces, chain complexes of $R$-modules, sequential spectra, symmetric spectra, orthogonal spectra and EKMM $\mathbb{S}$-modules.

\section{Stable left localisation}\label{sec:stableleftloc}

In this section we introduce the notion of left Bousfield localisation with respect to a {``stable''} class of morphisms. We then show that in this framework, the left Bousfield localisation of a stable model category remains stable. We will see that if $\C$ is a stable model category and $S$ is a stable class of maps, then $L_S\C$ (provided it exists) is right proper whenever $\C$ is. Furthermore, if $\C$ is cellular and proper, we can specify a very convenient set of generating cofibrations and acyclic cofibrations for $L_S \C$.

\bigskip
In Section \ref{sec:examples} we defined the notion of $S$-local objects and $S$-equivalences for a class of maps $S \subset \C$.
Note that elements $s \in S$ are automatically $S$-equivalences, although the converse does not have to be true. For example, any weak equivalence in $\C$ is an $S$-equivalence.

By \cite[Theorem 4.1.1]{Hir03}, in nice cases the $S$-local model structure on $\C$ exists. In particular, this result requires $S$ to be a set. 

\begin{theorem}[Hirschhorn]
Let $\C$ be a left proper, cellular model category. Let $S$ be a set of maps in $\C$. Then there is a model structure $L_S\C$ on the underlying category $\C$ such that
\begin{itemize}
\item weak equivalences in $L_S \C$ are $S$-equivalences,
\item the cofibrations in $L_S \C$ are the cofibrations in $\C$.
\end{itemize}
The fibrations in this model structure are called $S$-fibrations.
\end{theorem}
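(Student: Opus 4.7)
The plan is to build the desired model structure by producing explicit sets of generating (acyclic) cofibrations, running the small object argument to obtain factorisations, and then verifying the remaining axioms. Since the cofibrations of $L_S\C$ coincide with those of $\C$, one takes as generating cofibrations the existing set $I$ of generating cofibrations of $\C$. The real content lies in constructing a set $J_S$ of generating $S$-acyclic cofibrations. I would enlarge the generating acyclic cofibrations $J$ of $\C$ by a set $\Lambda(S)$ of \emph{horns on $S$}: for each $s \co A \to B$ in $S$ (replaced if necessary by a cofibration between cofibrant objects), the pushout-product maps
\[
A \otimes \Delta[n] \cup_{A \otimes \partial\Delta[n]} B \otimes \partial\Delta[n] \longrightarrow B \otimes \Delta[n]
\]
computed in a cosimplicial frame on $\C$, as $n$ ranges over $\mathbb{N}$.

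First I would verify that a fibrant $Z \in \C$ is $S$-local if and only if $Z \to \ast$ has the right lifting property against every element of $\Lambda(S)$; this is a direct translation via the framing adjunctions of the requirement that $\Map_\C(s,Z)$ be a weak equivalence of simplicial sets. Applying the small object argument to $J \cup \Lambda(S)$ then produces a fibrant replacement functor landing in $S$-local fibrant objects, where cellularity of $\C$ is used to ensure that the domains of the horns are small enough relative to the augmented set for the transfinite construction to terminate. One then \emph{defines} the $S$-fibrations as the maps with right lifting property against all cofibrations that are $S$-equivalences, and checks that an object is $S$-fibrant precisely when it is $S$-local and fibrant in $\C$.

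The main obstacle, and the bulk of the work, is showing that the class of $S$-acyclic cofibrations is generated by a \emph{set}, so that the (acyclic cofibration, fibration) factorisation axiom follows from the small object argument applied to $J \cup \Lambda(S)$. This is the Bousfield--Smith cardinality argument: one exhibits a cardinal $\kappa$, depending on $I$ and $S$, with the property that every $S$-acyclic cofibration is a transfinite composition of pushouts of $S$-acyclic cofibrations with $\kappa$-presentable domain and codomain, so that a set of representatives suffices. Cellularity is indispensable here for controlling the presentation rank of cell complexes, while left properness is used to prove that $S$-equivalences are preserved under pushout along cofibrations and under transfinite composition, guaranteeing in particular that pushouts of elements of $J \cup \Lambda(S)$ remain $S$-equivalences.

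The remaining axioms follow with little additional work. Two-out-of-three for $S$-equivalences reduces to two-out-of-three for weak equivalences in $\SSet$ applied to $\Map_\C(-,Z)$ for $S$-local $Z$; closure of $S$-equivalences under retracts is formal; and the two halves of the lifting axiom fall out of the standard small object argument factorisations produced above, together with the characterisation of $S$-fibrations by lifting. The overall structure of the argument is thus a classical one, and the technical heart is the cardinality bound combined with left properness, which together force the $S$-acyclic cofibrations to admit a small generating set.
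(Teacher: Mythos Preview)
The paper does not actually prove this theorem: it is stated with attribution to Hirschhorn and cited as \cite[Theorem 4.1.1]{Hir03}, with no proof given. So there is no ``paper's own proof'' to compare against; the authors simply import the result as background.

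That said, your sketch is broadly faithful to Hirschhorn's argument, but there is a genuine conflation in it that the paper itself goes out of its way to warn against. You write that the Bousfield--Smith cardinality argument shows the $S$-acyclic cofibrations are generated by a set, ``so that the (acyclic cofibration, fibration) factorisation axiom follows from the small object argument applied to $J \cup \Lambda(S)$.'' This is not how it works in general: $J \cup \Lambda(S)$ correctly detects the $S$-local fibrant \emph{objects}, but it need not generate the $S$-acyclic cofibrations, and factorising via $J \cup \Lambda(S)$ does not in general produce $S$-fibrations. The Bousfield--Smith argument instead manufactures a different, typically much larger set $\widetilde{J}$ of $S$-acyclic cofibrations bounded by some cardinal $\kappa$, and it is \emph{that} set to which one applies the small object argument. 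The paper makes exactly this point after Theorem~\ref{thm:stableleftlocal} and illustrates it with Hirschhorn's example in $L_{\{f\}}$ of topological spaces where $\ast \to S^n$ is an $S$-acyclic cofibration not in $(J \cup \Lambda\{f\})$-cof. Indeed, the central contribution of Section~\ref{sec:stableleftloc} is precisely that under the additional hypothesis of stability one \emph{can} take $J \cup \Lambda S$ as the generating acyclic cofibrations, bypassing the opaque cardinality argument; your sketch blurs the distinction between the general case and this stable refinement.
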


Note that fibrant replacement $U_S$ in $L_S \C$ is a localisation, that is, an $S$-equivalence
\[
X \longrightarrow U_S(X)
\]
where $U_S(X)$ is $S$-local. It is important to distinguish between fibrant in $\C$, $S$-fibrant and $S$-local. The first two are model category conditions, the third is a condition on the homotopy type of an object. Note that an object is $S$-fibrant if and only if it is $S$-local and fibrant in $\C$. 

The functors $\Sigma$ and $\Omega$ interact well with 
homotopy function complexes since all three can be defined
via framings. In particular we the weak equivalences below. 
\[
\Map_\C(\Sigma X, Y) \simeq \Map_\C(X, \Omega Y) \simeq \Omega\Map(X,Y)
\]
Combining this adjunction with Definition \ref{def:local} we obtain the following pair of facts.
\begin{itemize}
\item The class of $S$-equivalences is closed under $\Sigma$.
\item The class of $S$-local objects is closed under $\Omega$.
\end{itemize}

\begin{definition}
Let $S$ be a class of maps in $\C$. We say that $S$ is \textbf{stable} if the collection of $S$-local objects is closed under $\Sigma$.
\end{definition}

\begin{ex}
Let $\C$ be either the category of pointed simplicial sets or the category of spectra. 
Let $S$ be the class of $E_*$-isomorphisms for a generalised homology theory $E$. 
Then $S$ is not a set in either of these two cases, but it is stable and $L_S \C$ exists.
\end{ex}

A simple adjunction argument shows the following.

\begin{lemma}\label{lem:stableconditions}
If $\C$ is a stable model category, 
then a class of maps $S$ is stable if and only if
the collection of $S$-equivalences is closed under $\Omega$.
In particular, if the set $S$ is closed under $\Omega$, then the set $S$ is stable. \qed
\end{lemma}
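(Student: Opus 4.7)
The plan is to leverage the fact that in a stable model category, $\Sigma$ and $\Omega$ are mutually inverse equivalences of $\Ho(\C)$, so in addition to the usual adjunction $\Sigma \dashv \Omega$ we also have $\Omega \dashv \Sigma$. Enriching both adjunctions over simplicial sets (using that $\Map$ plays well with framings, as noted just before Definition~\ref{def:local} in the stable set-up), we obtain natural weak equivalences
\[
\Map_\C(\Sigma X, Y) \simeq \Map_\C(X, \Omega Y) \quad \text{and} \quad \Map_\C(\Omega X, Y) \simeq \Map_\C(X, \Sigma Y).
\]
These two weak equivalences are essentially the only tool needed; everything else is a routine chase through the definitions.

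For the forward implication, assume $S$ is stable, i.e.\ the $S$-local objects are closed under $\Sigma$. Let $f \co X \to Y$ be an $S$-equivalence and $Z$ be $S$-local. Then $\Sigma Z$ is also $S$-local by hypothesis, so the map
\[
\Map_\C(\Omega f, Z) \simeq \Map_\C(f, \Sigma Z)
\]
is a weak equivalence, showing $\Omega f$ is an $S$-equivalence. For the reverse implication, assume the $S$-equivalences are closed under $\Omega$. Let $Z$ be $S$-local and $s \co A \to B$ lie in $S$. Since $s$ is (trivially) an $S$-equivalence, so is $\Omega s$, hence
\[
\Map_\C(s, \Sigma Z) \simeq \Map_\C(\Omega s, Z)
\]
is a weak equivalence, which shows $\Sigma Z$ is $S$-local and completes the equivalence.

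For the concluding \textbf{in particular} statement, suppose $S$ itself is closed under $\Omega$. Then for any $s \in S$ and any $S$-local $Z$, the element $\Omega s$ still lies in $S$, so $\Map_\C(\Omega s, Z)$ is a weak equivalence directly by $S$-locality of $Z$, and the adjunction above again gives $\Map_\C(s, \Sigma Z)$ as a weak equivalence. Thus $\Sigma Z$ is $S$-local and $S$ is stable.

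There is no real obstacle here: the only thing one has to be slightly careful about is that the second adjunction $\Omega \dashv \Sigma$ is genuinely available (a stable-category-specific feature) and that the framing-level mapping space equivalences descend to the derived level, both of which are already recorded in Section~\ref{sec:modelcats}.
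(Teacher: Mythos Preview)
Your argument is correct and is precisely the ``simple adjunction argument'' the paper alludes to but does not spell out; the paper records only a \qed\ for this lemma, and your use of the two derived adjunctions $\Map_\C(\Sigma X,Y)\simeq\Map_\C(X,\Omega Y)$ and $\Map_\C(\Omega X,Y)\simeq\Map_\C(X,\Sigma Y)$, the second being available by stability, is exactly what is intended.
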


\begin{rmk}\label{rmk:usehomotopyclasses}
The definitions of $S$-equivalences and $S$-local objects are 
given in terms of homotopy function complexes, denoted $\Map(-,-)$.
However since we work in a stable context we can rewrite these definitions into 
more familiar forms involving $[-,-]_*^{\C}$, the graded set of maps in the
homotopy category of $\C$. 

By \cite[Theorem 17.7.2]{Hir03}, there is a natural isomorphism
$$\pi_0 \Map_\C(X,Y) \cong [X,Y]^{\C}.$$ 
It follows that $$\pi_n \Map_\C(X,Y) \cong [X,Y]^{\C}_n \,\,\,\mbox{for}\,\,\, n \geqslant 0.$$
Similarly, $$\pi_n \Map_\C(\Omega^{k} X,Y) \cong [X,Y]^{\C}_{n-k} \,\,\,\mbox{for}\,\,\, n,k \geqslant 0.$$

It follows that $f \co X \to Y$ is an $S$-equivalence if and only if
the map 
\[
[f,Z]_*^{\C} \co [Y,Z]_*^{\C} \longrightarrow [X,Z]_*^{\C}
\] 
is an isomorphism of graded abelian groups. 
\end{rmk}

\begin{proposition}\label{prop:leftstable}
Let $\C$ be a stable model category, let $S$ be a class of maps and
assume that $L_S\C$ exists.
Then $L_S\C$ is a stable model category if and only if
$S$ is a stable class of maps.
\end{proposition}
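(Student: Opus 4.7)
The plan is to identify $\Ho(L_S\C)$ with the reflective subcategory of $\Ho(\C)$ spanned by the $S$-local objects and compare the suspension--loops adjunction on $L_S\C$ with that of $\C$ through this identification. Write $L_S$ for fibrant replacement in $L_S\C$, which under the identification becomes the left adjoint to the inclusion of $S$-local objects into $\Ho(\C)$.

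First I would record two bookkeeping facts. Since the cofibrations of $L_S\C$ coincide with those of $\C$, the suspension in $L_S\C$ of a cofibrant object is computed by literally the same pushout as in $\C$, so $\Sigma_S$ corresponds under the identification to $L_S \circ \Sigma_\C$. On the other hand, the framing identification $\Map_\C(A, \Omega Z) \simeq \Omega \Map_\C(A, Z)$ (recorded in this section just before Definition of stable class) shows that $\Omega_\C$ always sends $S$-local objects to $S$-local objects; combined with the standard fact that an $S$-equivalence between two $S$-local objects is a weak equivalence in $\C$, this shows that on the subcategory of $S$-local objects the loop functor $\Omega_S$ of $L_S\C$ coincides with $\Omega_\C$.

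Next I would analyse the adjunction $(\Sigma_S \dashv \Omega_S)$ on $\Ho(L_S\C)$. For any $S$-local $Y$ the counit evaluates to
\[
\Sigma_S \Omega_S Y \;=\; L_S \Sigma_\C \Omega_\C Y \;\cong\; L_S Y \;=\; Y,
\]
using stability of $\C$ to cancel $\Sigma_\C \Omega_\C$ and the fact that $L_S$ fixes $S$-local objects. So the counit is always an isomorphism, and $L_S\C$ is stable if and only if the unit $Y \to \Omega_\C L_S \Sigma_\C Y$ is an isomorphism for every $S$-local $Y$. Applying the equivalence $\Sigma_\C$ on $\Ho(\C)$, this unit is identified with the $S$-localisation map $\Sigma_\C Y \to L_S \Sigma_\C Y$.

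The clinching observation is that the $S$-localisation map is always an $S$-equivalence, and it is a weak equivalence in $\C$ precisely when its source $\Sigma_\C Y$ is itself $S$-local: the forward direction uses the standard fact that an $S$-equivalence between two $S$-local objects is a weak equivalence in $\C$, and the reverse direction uses that the source of a weak equivalence in $\C$ with $S$-local target is $S$-local. Hence the unit is an isomorphism for every $S$-local $Y$ exactly when $\Sigma_\C$ sends $S$-local objects to $S$-local objects, which is the definition of $S$ being a stable class. The main technical step I expect to labour over is the identification of $\Sigma_S$ and $\Omega_S$ with $L_S \Sigma_\C$ and $\Omega_\C$ on the reflective subcategory of $S$-local objects; once that identification is in place the conclusion is a short manipulation of the unit and counit of $\Sigma_S \dashv \Omega_S$.
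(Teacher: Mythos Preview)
Your proposal is correct and follows essentially the same approach as the paper: both identify $\Omega_S$ with the restriction of $\Omega_\C$ to the full subcategory of $S$-local objects and then argue from there. The paper phrases the remaining step as showing $\Omega_S$ is fully faithful (automatic, as the restriction of an equivalence) and essentially surjective, whereas you analyse the unit and counit of $(\Sigma_S,\Omega_S)$ directly via the identification $\Sigma_S \cong L_S\Sigma_\C$; these are interchangeable formulations of the same argument.
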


\begin{proof}
The homotopy category of $L_S\C$ is equivalent to the full subcategory of
$\Ho(\C)$ with object class given by the $S$-local objects.
In Section \ref{sec:modelcats} we defined the functor $\Omega$ in
terms of framings.
In particular the restriction of the functor $$\Omega \co \Ho(\C) \to \Ho(C)$$
to $\Ho(L_S \C)$ is naturally isomorphic to the the desuspension functor
on $\Ho(L_S \C)$ coming from framings on the model category $L_S \C$.
We thus see that $$\Omega \co \Ho(L_S \C) \to \Ho(L_S \C)$$
is a fully faithful functor as it is the restriction of an
equivalence to a full subcategory.
We must show that it is essentially surjective.
Consider some $S$-local $X$, then the suspension $\Sigma X$ of
$X$ is also $S$-local as $S$ is a stable
class of maps.
Hence $\Sigma X$ is in $\Ho(L_S \C)$ and the unit of the adjunction
$(\Sigma, \Omega)$ on $\Ho(\C)$ gives an isomorphism
$$X \to \Omega \Sigma X$$ in $\Ho(\C)$ and hence in $\Ho(L_S \C)$.

\medskip
For the converse, assume that $L_S \C$ is stable,
and consider some $S$-local $X$.
Then $$\Omega \co  \Ho(L_S \C) \to \Ho(L_S \C)$$
is an essentially surjective functor. Hence there is some
$S$-local $Y$ such that $\Omega Y$ is isomorphic to $X$ in
$\Ho(L_S \C)$. It follows that $\Omega Y$ is isomorphic
to $X$ in $\Ho(\C)$. Then by stability of $\C$,
$\Sigma \Omega Y \cong Y$ is isomorphic to
$\Sigma X$ in $\Ho(\C)$. Since $Y$ is $S$-local,
it follows that $\Sigma X$ must also be $S$-local,
hence $S$ is a stable class of maps.
\end{proof}

So for a stable class $S$, the homotopy category of $\Ho(L_S\C)$ is
triangulated, which is the vital ingredient of the next proposition.
By \cite[Proposition 3.4.4]{Hir03} we know that $L_S\C$ is left proper
if $\C$ is left proper. But we now also have the following.

\begin{proposition}\label{prop:rightproper}
Let $\C$ be a stable, right proper model category and $S$ a stable class of maps. If $L_S\C$ exists, then it is right proper.
\end{proposition}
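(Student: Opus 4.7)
The plan is to exploit the triangulated structure on $\Ho(\C)$ (coming from stability of $\C$) together with the reformulation of $S$-equivalences via graded morphism groups into $S$-local objects provided by Remark \ref{rmk:usehomotopyclasses}. This will reduce right properness of $L_S\C$ to a five-lemma argument on a map of long exact sequences arising from a pair of fibre sequences.

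Concretely, I would start with an arbitrary pullback square
\[
\xymatrix{
P \ar[r]^{p} \ar[d]_{g} & X \ar[d]^{f} \\
A \ar[r]_{h} & B
}
\]
in which $f$ is an $S$-fibration and $h$ is an $S$-equivalence, and aim to show that $p$ is an $S$-equivalence. First I would note that every $S$-fibration is automatically a fibration in $\C$, since the $S$-acyclic cofibrations contain all acyclic cofibrations of $\C$. Hence $g$ is also a fibration in $\C$, and the strict fibres of $f$ and $g$ coincide with a common object $F$ which is also their homotopy fibre.

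Stability of $\C$ now promotes the fibre sequences $F \to P \to A$ and $F \to X \to B$ to distinguished triangles in $\Ho(\C)$, and the pullback square produces a morphism of these triangles given by the identity on $F$ and $\Sigma F$, by $p$ on the middle term, and by $h$ on the third term. For any $S$-local object $Z$ in $\C$, applying $[-,Z]^{\C}_*$ yields a map of cohomological long exact sequences of graded abelian groups. By Remark \ref{rmk:usehomotopyclasses}, the hypothesis that $h$ is an $S$-equivalence is exactly the statement that $h^*$ is an isomorphism of graded groups, and the vertical maps induced by the identity on $F$ and $\Sigma F$ are obviously isomorphisms. The five lemma then forces $p^*$ to be an isomorphism for every $S$-local $Z$, and a second invocation of Remark \ref{rmk:usehomotopyclasses} gives that $p$ is an $S$-equivalence, as desired.

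I anticipate no serious obstacle: the only step deserving a moment's care is verifying that the two fibre sequences assemble into a map of distinguished triangles with identity on the common fibre, which is standard once one knows $f$ is a fibration in $\C$ so that strict and homotopy fibres agree. Right properness of $\C$ does not seem to be used directly in the five-lemma step; it enters as a background hypothesis that keeps the ambient homotopical constructions well behaved and aligns the result with Hirschhorn's framework.
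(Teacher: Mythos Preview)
Your approach is essentially the paper's: build a map of fibre sequences from the pullback square and conclude by a five-lemma argument. The paper packages this as a morphism of exact triangles in $\Ho(L_S\C)$ and invokes the triangulated five lemma there, while you apply $[-,Z]^{\C}_*$ for $S$-local $Z$ in $\Ho(\C)$ and use the ordinary five lemma via Remark~\ref{rmk:usehomotopyclasses}; these are equivalent formulations.

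There is one genuine misattribution, however. Right properness of $\C$ is not a background nicety: it is exactly what ensures that the strict fibre of your fibration $f$ (and hence of its pullback $g$) agrees with the homotopy fibre. The paper invokes \cite[Proposition~13.4.6]{Hir03} for precisely this step. Without right properness, when $B$ is not fibrant the strict pullback of $\ast \to B \leftarrow X$ need not compute the homotopy pullback, so your sequences $F \to P \to A$ and $F \to X \to B$ are not known to be distinguished triangles in $\Ho(\C)$, and the five-lemma argument has nothing to act on. Once you recognise that right properness is doing this specific work, your proof is complete and matches the paper's.
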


\begin{proof} We consider the following pullback square
\[
\xymatrix{ 
X' \ar[d]_{u} \ar[r]^{p'} & Y' \ar[d]^{v} \\
X \ar[r]_{p} & Y
}
\]
where $p$ is an $S$-fibration (and hence a fibration in $\C$) and $v$ is an $S$-equivalence. Our goal is to show that $u$ is also an $S$-equivalence.

The fibre of a map $p: X \longrightarrow Y$ is defined as the pullback of the diagram
\[
X \stackrel{p}{\longrightarrow} Y \longleftarrow *.
\]
Since $\C$ is right proper, \cite[Proposition 13.4.6]{Hir03} 
tells us that the fibre of $p$ is also the homotopy fibre of $p$, $Fp$. 
Similarly the fibre of $p'$ is also its homotopy fibre $Fp'$. 
The fibres are isomorphic since we started with a pullback square, 
hence the homotopy fibres are weakly equivalent. 
Now consider the comparison of exact triangles in $\Ho(\C)$
\[
\xymatrix{\Omega Y' \ar[r]\ar[d]_{\Omega v} & Fp' \ar[r]\ar[d]_{\cong} & X' \ar[r]\ar[d]_{u} & Y' \ar[d]_{v} \\
\Omega Y \ar[r] & Fp \ar[r] & X \ar[r] & Y.
}
\]
Since $S$ is stable, this is also a morphism of exact triangles in $\Ho(L_S\C)$. 
Furthermore, $\Omega v$ is an $S$-equivalence. 
Hence the five lemma for triangulated categories implies that $u$ is also an $S$-equivalence, which is what we wanted to show.
\end{proof}

We now need a pair of technical lemmas, the second of which gives a useful characterisation of 
$S$-fibrations.

\begin{lemma}\label{lem:triangle} Let $\C$ be a stable model category and $S$ a stable class of maps.
Assume that $L_S \C$ exists and that we have a commutative triangle in $\C$
\[
\xymatrix{ X \ar[rr]^{u} \ar[dr]_{p} & & Y \ar[dl]^{q} \\
 & B &
 }
\]
such that the homotopy fibres of $p$ and $q$ are $S$-local. Then $u$ is an $S$-equivalence if and only if it is a weak equivalence in $\C$. 
\end{lemma}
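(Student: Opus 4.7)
The plan is to reduce the converse direction to two applications of the triangulated $5$-lemma, one performed in $\Ho(L_S\C)$ and one in $\Ho(\C)$. The forward implication is immediate, since every weak equivalence in $\C$ is an $S$-equivalence, so I would assume $u$ is an $S$-equivalence and argue that it must then be a weak equivalence in $\C$.

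The commuting triangle induces a comparison of the homotopy fibre sequences of $p$ and $q$, giving a morphism of exact triangles in $\Ho(\C)$
\[
\xymatrix{\Omega B \ar[r]\ar[d]_{\id} & Fp \ar[r]\ar[d]_{f} & X \ar[r]^p\ar[d]_{u} & B \ar[d]^{\id} \\
\Omega B \ar[r] & Fq \ar[r] & Y \ar[r]_q & B}
\]
for an induced map $f \co Fp \to Fq$. Since $S$ is stable, Proposition \ref{prop:leftstable} ensures that $L_S\C$ is stable, and exactly as in the proof of Proposition \ref{prop:rightproper} the same diagram is a morphism of exact triangles in $\Ho(L_S\C)$. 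The maps $u$ and $\id_B$ are isomorphisms in $\Ho(L_S\C)$, so the triangulated $5$-lemma applied in $\Ho(L_S\C)$ forces $f$ to be an isomorphism there, i.e.\ an $S$-equivalence.

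At this point I use the hypothesis that $Fp$ and $Fq$ are $S$-local: an $S$-equivalence between $S$-local objects is a weak equivalence in $\C$, by \cite[Theorem 3.2.13]{Hir03}. Hence $f$ is a weak equivalence in $\C$. Returning now to the same morphism of triangles viewed in $\Ho(\C)$, the maps $f$ and $\id_B$ are both isomorphisms, so the triangulated $5$-lemma in $\Ho(\C)$ shows that $u$ is an isomorphism in $\Ho(\C)$, and hence a weak equivalence in $\C$. The main point to check along the way is that our diagram really is a morphism of exact triangles in $\Ho(L_S\C)$ as well as in $\Ho(\C)$; this is because the loop functor on $\Ho(L_S\C)$ is naturally isomorphic to the restriction of the one on $\Ho(\C)$, via framings, exactly as exploited in Proposition \ref{prop:rightproper}.
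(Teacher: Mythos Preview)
Your proof is correct and follows the same approach as the paper: set up the morphism of exact triangles in both $\Ho(\C)$ and $\Ho(L_S\C)$, use the $5$-lemma in $\Ho(L_S\C)$ to conclude the induced map on homotopy fibres is an $S$-equivalence, invoke the fact that $S$-equivalences between $S$-local objects are weak equivalences, and then apply the $5$-lemma in $\Ho(\C)$. The paper's version is simply more terse, compressing both $5$-lemma applications into the single sentence ``Since $S$-equivalences between $S$-local objects are weak equivalences, the result follows.''
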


\begin{proof} 
The above gives a distinguished triangle in $\Ho(\C)$ and hence in $\Ho(L_S \C)$
\[
\xymatrix{ \Omega B \ar[r] \ar@{=}[d] & Fp \ar[r] \ar[d]_{v} & X \ar[r]^{p} \ar[d]_{u} & B  \ar@{=}[d] \\
\Omega B \ar[r] & Fq \ar[r] & Y \ar[r]^{q} & B
}
\]
Since $S$-equivalences between $S$-local objects are weak equivalences, the result follows. 
\end{proof}

\begin{lemma}\label{lem:fibrant}
 Let $\C$ be a stable right proper model category such that $L_S \C$ exists. Consider a fibration $p: X \longrightarrow Y$ in $\C$. Then $p$ is an $S$-fibration if and only if the fibre of $p$ is $S$-fibrant.
\end{lemma}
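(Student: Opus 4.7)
The plan is to prove the two directions separately, leaning on right properness to equate strict fibres with homotopy fibres and on Lemma \ref{lem:triangle} to upgrade an $S$-equivalence to a weak equivalence in $\C$.

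The forward direction is essentially formal. If $p$ is an $S$-fibration, then $p$ is a fibration in the model structure $L_S \C$. Fibrations are closed under pullback in any model category, so pulling $p$ back along $* \to Y$ shows that $F \to *$ is an $S$-fibration. Hence $F$ is fibrant in $\C$ and $S$-local, that is, $S$-fibrant. I would dispatch this in a sentence or two.

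For the converse, I would factor and retract. Given a fibration $p \co X \to Y$ in $\C$ whose fibre $F$ is $S$-fibrant, factor $p$ in $L_S \C$ as
\[
X \xrightarrow{\,i\,} X' \xrightarrow{\,q\,} Y,
\]
where $i$ is a cofibration and $S$-equivalence and $q$ is an $S$-fibration; in particular, $q$ is still a fibration in $\C$. By the forward direction already proved, the fibre $F'$ of $q$ is $S$-fibrant, and in particular $S$-local. Since $p$ and $q$ are both fibrations in $\C$ and $\C$ is right proper, \cite[Proposition 13.4.6]{Hir03} identifies the strict fibres $F$ and $F'$ with the respective homotopy fibres. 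We are therefore in the situation of Lemma \ref{lem:triangle} applied to the triangle $p = q \circ i$: both homotopy fibres are $S$-local, so $i$ is an $S$-equivalence if and only if it is a weak equivalence in $\C$. Since $i$ is an $S$-equivalence by construction, $i$ must in fact be a trivial cofibration in $\C$.

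It remains to upgrade this to a retract argument. Because $i$ is now a trivial cofibration in $\C$ and $p$ is a fibration in $\C$, the square with $\mathrm{id}_X$ on top, $i$ down the left, $p$ down the right and $q$ across the bottom admits a lift $r \co X' \to X$ with $r i = \mathrm{id}_X$ and $p r = q$. This exhibits $p$ as a retract of $q$ over $Y$, and since $S$-fibrations are closed under retracts, $p$ is itself an $S$-fibration. The only step I expect to need care is checking that Lemma \ref{lem:triangle} genuinely applies after the factorisation, and this is precisely why the right properness hypothesis and the first direction had to be set up first; once those are in hand, the rest is a routine retract.
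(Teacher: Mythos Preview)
Your proposal is correct and follows essentially the same route as the paper: factor $p$ in $L_S\C$, use right properness to identify strict and homotopy fibres, invoke Lemma~\ref{lem:triangle} to see that the first factor is a genuine weak equivalence in $\C$, and then run the retract argument. The only cosmetic difference is that you explicitly appeal to the forward direction to conclude the fibre of $q$ is $S$-fibrant, whereas the paper states this in passing.
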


\begin{proof}
Since pullbacks of fibrations are fibrations, the fibre of an $S$-fibration is $S$-fibrant. 
Conversely, assume that the fibre $Fp$ is $S$-fibrant. Since $\C$ is assumed to be right proper, $Fp$ is also the homotopy fibre of $p$. We factor $p$ in $L_S\C$ as below.
\[
\xymatrix{ X \ar[dr]_{p} \ar@{>->}[rr]^{j}_{\sim} & & B \ar@{->>}[dl]^{q} \\
& Y &
}
\]
Since the homotopy fibres of $p$ and $q$ are both $S$-fibrant and hence $S$-local, $j$ is a weak equivalence in $\C$ by Lemma \ref{lem:triangle}. As $p$ is a fibration in $\C$, it has the right lifting property with respect to $j$
\[
\xymatrix{ X \ar@{=}[r]\ar@{>->}[d]_{\sim}^{j} & X \ar@{>>}[d]^{p} \\
B \ar[r]_{q} \ar@{.>}[ur]_{f} & Y.
}
\]
The commutative diagram
\[
\xymatrix{ X \ar[r]^{j}\ar[d]_{p}& B \ar[r]^{f}\ar[d]_{q}& X \ar[d]_{p}\\
Y \ar@{=}[r] & Y \ar@{=}[r] & Y
}
\]
shows that $p$ is a retract of the $S$-fibration $q$ and hence an $S$-fibration itself, which is what we wanted to show.
\end{proof}

We are now almost ready to prove our main theorem for this section which gives a very convenient description of the generating cofibrations and acyclic cofibrations of $L_S\C$ when $S$ is assumed to be stable. For technical reasons we want 
$S$ to consist of cofibrations between cofibrant objects. 
Any map is weakly equivalent to such a map and 
changing the maps in $S$ up to weak equivalence 
does not alter the weak equivalences of $L_S \C$, so this is no restriction. 

Before we give the theorem, we need an extra piece of terminology, see
\cite[Definition 3.3.8]{Hir03}. Recall that in Section \ref{sec:modelcats} we defined the action of simplicial sets on $\C$ via framings, which gives a bifunctor
\[
- \otimes - : \C \times \SSet_* \longrightarrow \C.
\]
In particular, if the model category $\C$ is simplicial, then this agrees with the given simplicial action on $\C$. 

\begin{definition}
Let $f \co A \to B$ be a map of $\C$ and let $i_n \co \partial \Delta[n]_+ \to \Delta[n]_+$
be the standard inclusion of pointed simplicial sets. 
Then we define a set of \textbf{horns} on a set of maps $S$ in $\C$ to be 
the set of maps of $\C$ below. 
\[
\Lambda S = 
\Bigl\{
{f} \square i_n \co {A} \otimes \Delta[n]_+ 
\underset{{A} \otimes \partial \Delta[n]_+}{\amalg}
{B} \otimes \partial \Delta[n]_+
\to 
{B} \otimes \Delta[n]_+
| \ (f \co A \to B) \in S, \ n \geqslant 0
\Bigr\}
\]
\end{definition}
In the above definition, one has to choose cosimplicial resolutions of $A$ and $B$ 
such that $f$ induces a Reedy cofibration between the resolutions. 
However the model structure $L_S \C$ is independent of these choices. 
Note that if $S$ consists of cofibrations between cofibrant objects, so does 
$\Lambda S$. 

\begin{theorem}\label{thm:stableleftlocal}
Let $\C$ be a stable, proper, cellular model category with generating cofibrations $I$ and generating acyclic cofibrations $J$. 
Let $S$ be a stable set of cofibrations between cofibrant objects. Then $L_S\C$ is cellular, with the generating cofibrations of $L_S\C$ given by $I$ and the generating acyclic cofibrations are given by $J \cup \Lambda S$.
\end{theorem}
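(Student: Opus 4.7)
The plan is to observe that since the cofibrations of $L_S\C$ agree with those of $\C$, the set $I$ continues to generate them and the real work reduces to identifying a generating set of acyclic cofibrations. I would verify two things: every element of $J \cup \Lambda S$ is an acyclic cofibration in $L_S\C$, and a map has the right lifting property with respect to $J \cup \Lambda S$ precisely when it is an $S$-fibration.

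The first point is straightforward. Elements of $J$ are acyclic cofibrations of $\C$, hence automatically $S$-acyclic cofibrations of $L_S\C$. For an element of $\Lambda S$, the pushout-product of a cofibration $f \co A \to B$ in $S$ with $i_n \co \partial\Delta[n]_+ \to \Delta[n]_+$ is a cofibration of $\C$, because $S$ consists of cofibrations between cofibrant objects and $i_n$ is a cofibration of pointed simplicial sets. To see it is an $S$-equivalence, I would test against an arbitrary $S$-local fibrant $Z$ via the framing adjunction: the lifting condition dualises to the statement that $f^* \co \Map(B,Z) \to \Map(A,Z)$ is an acyclic Kan fibration, which is exactly the $S$-locality of $Z$.

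The core of the argument is the second point. Having the right lifting property with respect to $J$ amounts to $p \co X \to Y$ being a fibration in $\C$, so assume this. By Lemma \ref{lem:fibrant}, such a $p$ is an $S$-fibration if and only if its fibre $F$ is $S$-fibrant; since $F$ is automatically fibrant (as the pullback of a fibration along $* \to Y$), this reduces to showing $F$ is $S$-local. The right lifting property against $\Lambda S$ is preserved under pullback, so if $p$ has it, then $F \to *$ does as well, and by the framing adjunction this amounts to $\Map(B,F) \to \Map(A,F)$ being an acyclic Kan fibration for every $f \in S$, which is precisely $S$-locality of $F$. Conversely, once $F$ is shown to be $S$-local, Lemma \ref{lem:fibrant} delivers that $p$ is an $S$-fibration, and since every element of $\Lambda S$ is an $S$-acyclic cofibration by the first point, $p$ has the required lifting property. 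The main obstacle here is the adjunction bookkeeping that identifies right lifting against horns with the acyclic Kan fibration condition on mapping spaces; right properness of $\C$ enters precisely through Lemma \ref{lem:fibrant}, which lets us replace a homotopy-theoretic condition on homotopy fibres by a point-set condition on strict fibres.

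For cellularity, the set $I$ is unchanged and so inherits compactness of its domains and codomains and the property that cofibrations are effective monomorphisms. The domains of $J$ are small relative to $I$ by hypothesis, and the domains of $\Lambda S$ are pushouts of tensors $A \otimes \Delta[n]_+$ and $B \otimes \partial\Delta[n]_+$ of cofibrant objects with compact pointed simplicial sets; such pushouts are again small relative to $I$, so $L_S\C$ satisfies the cellularity conditions of \cite[Definition 12.1.1]{Hir03}.
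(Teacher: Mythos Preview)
Your proposal is correct and follows essentially the same route as the paper. Both arguments hinge on Lemma~\ref{lem:fibrant}: rather than attacking the lifting problem against an arbitrary $S$-fibration directly, one pulls back to the fibre and reduces to the characterisation of $S$-fibrant \emph{objects}, which is the content of \cite[Proposition~4.2.4]{Hir03}. The paper phrases this as a general claim about any set $T$ of $S$-acyclic cofibrations that detects $S$-fibrant objects, then specialises to $T=\Lambda S$; you go straight to $\Lambda S$, but the substance is identical. Your justification that $\Lambda S$ consists of $S$-equivalences is a bit terse---the paper simply cites \cite[Proposition~4.2.3]{Hir03} for this---and your ``Conversely'' is slightly misplaced (it is still completing the forward implication, not starting the backward one), but the logic is sound.
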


\begin{proof} Note that our assumptions imply that $L_S\C$ exists and is cellular.
Our task is to show that $J \cup \Lambda S$ is a set of generating acyclic cofibrations.
First of all, let us prove the following claim. Assume that $T$ is a set of cofibrations that are also $S$-equivalences Further, assume that $Z \in \C$ is $S$-fibrant if and only if the map $Z \longrightarrow *$ has the right lifting property with respect to $J \cup T$. Then a map $f$ is an $S$-fibration if and only if it has the right lifting property with respect to $J \cup T$.

\medskip
If $f$ is an $S$-fibration, then of course it has the right lifting property with respect to both $J$ and $T$. So let us assume conversely that $f: X \longrightarrow Y$ is a map that has the right lifting property with respect to $J \cup T$. We want to use Lemma \ref{lem:fibrant} and show that $F$, the fibre of $f$, is $S$-fibrant. 
Take some $j \co A \to B$ in $J \cup T$ and consider a lifting square between $j$ and $F \to \ast$. 
We may extend that square to include $f$, as below.
\[
\xymatrix{ A \ar[r]\ar[d]^{j} & F \ar[r]\ar[d] & X\ar[d]_{f} \\
B \ar[r] \ar@{.>}[urr] & \ast \ar[r] & Y
}
\]
Since $f$ is assumed to have the right lifting property with respect to $j$, the lift in the diagram exists. By the universal property of the pullback, there is also a map $B \longrightarrow F$ making the left square commute. Thus, $F$ also has the right lifting property with respect to $J \cup T$.
Hence by our assumptions and Lemma \ref{lem:fibrant}, $f$ is an $S$-fibration. 

\medskip
Now that we have proven our claim, we are ready to prove that $J \cup \Lambda S$ is indeed a set of 
generating acyclic cofibrations of $L_S\C$, we have to show that the assumptions of the 
above claim are verified. This means we have to show that an object $Z$ is $S$-fibrant 
if and only if the map $Z \longrightarrow *$ has the right lifting property with respect to $J \cup \Lambda S$.

The maps of $J \cup \Lambda S$ are cofibrations that are $S$-local equivalences 
by \cite[Proposition 4.2.3]{Hir03}. Hence if $Z$ is $S$-fibrant, then
$Z \longrightarrow *$ has the right lifting property with respect to $J \cup \Lambda S$.
For the converse, we use \cite[Proposition 4.2.4]{Hir03}, noting that our naming conventions
are slightly different to the reference. 

To prove that this model category is cellular we must check the conditions of \cite[Definition 12.1.1]{Hir03}. This amounts to proving that the domains of $S$ are small relative to $I$. The domains of $S$ are cofibrant, hence they are small with respect to $I$ by \cite[Lemma 12.4.2]{Hir03}.
\end{proof}

\begin{rmk}\label{rmk:noncellular}
The work of \cite{Hir03} uses in an essential manner the assumption that $\C$ is cellular
to obtain a set of generating set of acyclic cofibrations for $L_S \C$. The reference then uses this set 
to show that $L_S \C$ exists. We have used stability to find such a set 
and then used the assumption that $\C$ is cellular
to see that this set satisfies the conditions of the small object argument.

Hence we have a partial refinement of the above theorem to the case when $\C$ is not cellular. 
Assume that $\C$ is a stable, proper cofibrantly generated model category and $S$ is a stable set
of cofibrations. If the domains of $J \cup \Lambda S$ are small relative to 
the class of transfinite compositions of pushouts of $J \cup \Lambda S$, 
then $L_S \C$ exists and is cofibrantly generated by the sets 
$I$ and $J \cup \Lambda S$. Furthermore it is stable and proper. 
\end{rmk}

Theorem \ref{thm:stableleftlocal} is a considerable improvement on the general situation where $\C$ has not been assumed to be stable. 
Without stability, the results of \cite{Hir03} only prove the existence of some set
of generating acyclic cofibrations. Indeed, the set $J \cup \Lambda S$ is not always a generating
set of acyclic cofibrations for $L_S \C$, as shown by \cite[Example 2.1.6]{Hir03} which we will spell out below.
The proof that $L_S \C$ exists and is cofibrantly generated 
in the unstable case uses the Bousfield-Smith cardinality argument.
So in general it is all but impossible to obtain a nice
description of the generating acyclic cofibrations from the proof. 

\begin{ex}
Consider the model category of topological spaces with weak equivalences the weak homotopy equivalences. 
Let $n >0$ and let $f \co S^{n} \to D^{n+1}$ be the inclusion. We now look at localisation with respect to $S=\{f\}$.

The path space fibration $$p \co P K(\mathbb{Z}, n) \to K(\mathbb{Z}, n)$$ has the right lifting property
with respect to $J \cup \Lambda \{ f \}$. Hence every 
$J \cup \Lambda \{ f \}$-cofibration has the left lifting property with respect to $p$. 
But the cofibration $\ast \to S^n$ does not have this left lifting property. 
The composite map $\ast \to S^n \to D^{n+1}$ is clearly an $\{f\}$-local equivalence
as is $f$ itself. Hence $\ast \to S^n$ is a cofibration and an 
$\{f\}$-local-equivalence that is not a 
$J \cup \Lambda \{ f \}$-cofibration.
\end{ex}

We can also use Theorem \ref{thm:stableleftlocal} to consider smashing localisations of spectra. Recall that Bousfield localisation of a model category of spectra $\Sp$, such as symmetric spectra or EKMM $\mathbb{S}$-modules 
is called \textbf{smashing} if for every spectrum $X$ the map 
\[
\lambda \wedge^L \id_X: X \longrightarrow X \wedge^L L_E \mathbb{S}
\]
is an $E$-localisation. 

\begin{lemma}\label{lem:smashing}
If localisation with respect to $E$ is smashing, then $L_E \Sp = L_\Gamma\Sp$ for
\[
\Gamma = \{ \Sigma^n \lambda: \mathbb{S}^n \longrightarrow L_E \mathbb{S}^n \,\,|\,\,n \in \mathbb{Z} \}.
\]
\end{lemma}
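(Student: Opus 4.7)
The plan is to show $L_E \Sp = L_\Gamma \Sp$ by verifying that the two model structures have the same local objects. Both are left Bousfield localisations of $\Sp$ with the same cofibrations, so once the local objects agree the weak equivalences (characterised as maps inducing isomorphisms on $[-,Z]^\Sp_*$ for all local $Z$) coincide, and the two model structures are equal. The easy containment is free: the localisation map $\lambda \co \sphspec \to L_E \sphspec$ is an $E$-equivalence and the class of $E$-equivalences is closed under $\Sigma$, so every element of $\Gamma$ is an $E$-equivalence, forcing every $E$-local object to be $\Gamma$-local.

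For the reverse containment, let $Z$ be $\Gamma$-local. By Remark~\ref{rmk:usehomotopyclasses} this says that for every $n$ the map $\Sigma^n \lambda$ induces an isomorphism on $[-, Z]^\Sp_*$. The smashing hypothesis identifies $L_E \sphspec^n$ with $\sphspec^n \smashprod^L L_E \sphspec$ and $\Sigma^n \lambda$ with $\id_{\sphspec^n} \smashprod \lambda$ in $\Ho(\Sp)$. Let $\mathcal{T}_Z \subseteq \Ho(\Sp)$ be the full subcategory of those $X$ for which $\id_X \smashprod \lambda \co X \to X \smashprod^L L_E\sphspec$ induces an isomorphism on $[-, Z]^\Sp_*$. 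Since $- \smashprod^L L_E\sphspec$ is exact on the triangulated category $\Ho(\Sp)$ and $[-, Z]^\Sp_*$ sends coproducts to products, a five-lemma argument shows $\mathcal{T}_Z$ is a localising subcategory. By what we have just observed it contains all spheres, and since the spheres generate $\Ho(\Sp)$ as a localising subcategory we conclude $\mathcal{T}_Z = \Ho(\Sp)$.

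To deduce $E$-locality of $Z$, take any $E$-equivalence $f \co X \to Y$ and consider the naturality square
\[
\xymatrix{
X \ar[r]^{f} \ar[d]_{\id \smashprod \lambda} & Y \ar[d]^{\id \smashprod \lambda} \\
X \smashprod^L L_E\sphspec \ar[r]_{f \smashprod \id} & Y \smashprod^L L_E\sphspec
}
\]
Under smashing the two vertical arrows are the $E$-localisations $X \to L_E X$ and $Y \to L_E Y$, and the bottom arrow is $L_E f$. As an $E$-equivalence between $E$-local objects, $L_E f$ is an isomorphism in $\Ho(\Sp)$, because the $E$-local subcategory embeds fully into $\Ho(\Sp)$. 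Applying $[-, Z]^\Sp_*$, the bottom and both vertical arrows become isomorphisms (the latter two by the previous paragraph applied to $X$ and $Y$), and hence so does $f^*$. Therefore $Z$ is $E$-local, which finishes the argument.

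The main technical obstacle I expect is pinning down the smashing identifications carefully — in particular, checking that $\Sigma^n \lambda$ agrees with $\id_{\sphspec^n} \smashprod \lambda$ in $\Ho(\Sp)$ coherently enough for the localising-subcategory step, and that the bottom arrow of the naturality square really is $L_E f$. Everything else reduces to standard triangulated-category bookkeeping together with the well-known fact that an $E$-equivalence between $E$-local spectra is a weak equivalence in $\Sp$.
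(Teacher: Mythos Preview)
Your proof is correct and follows essentially the same approach as the paper. The only cosmetic difference is packaging: you compare the classes of local objects while the paper compares the classes of weak equivalences directly, but the core argument --- the localising-subcategory step showing $X \to X \smashprod^L L_E\sphspec$ induces an isomorphism on $[-,Z]^\Sp_*$ for $\Gamma$-local $Z$, followed by the naturality square with the smashing identification --- is identical in both.
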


\begin{proof}
Every element in $\Gamma$ is an $E$-equivalence, hence every $\Gamma$-equivalence is an $E$-equivalence. Let us now consider the following commutative diagram, where $f: X \longrightarrow Y$ is a map of spectra and $Z$ is $\Gamma$-local.
\[
\xymatrix@C+1cm{ [Y,Z]_* \ar[r]^{f^*} & [X,Z]_*  \\
[Y \wedge^L L_E \mathbb{S}, Z]_* \ar[r]^{(f \smashprod^L L_E \mathbb{S})^*} \ar[u]^{\cong}& 
[X \wedge^L L_E \mathbb{S}, Z]_* \ar[u]_{\cong}
}
\]
The vertical arrows are isomorphisms because the map 
$X \longrightarrow X \wedge^L L_E \mathbb{S}$ 
is a $\Gamma$-equivalence and $Z$ is $\Gamma$-local. 
To see this, note that the class of objects $X$ for which this is a $\Gamma$-equivalence is closed under coproducts and exact triangles, and contains the sphere.

Now let $f$ be an $E$-equivalence. By assumption this is equivalent to  $f \wedge^L L_E \mathbb{S}$ being a weak equivalence. This implies that the bottom row of the commutative square is an isomorphism. Hence the top row is an isomorphism and thus $f$ is a $\Gamma$-equivalence. 
\end{proof}

\begin{corollary}\label{cor:smashingcofibs}
Let $\Sp$ be the model category of symmetric spectra or EKMM $\mathbb{S}$-modules with generating cofibrations $I$ and acyclic cofibrations $J$. Let $L_E$ be a smashing Bousfield localisation with respect to a homology theory $E$. Then $L_E \Sp$ is proper, stable and cellular with generating cofibrations $I$ and generating acyclic cofibrations $J \cup \Lambda \Gamma$. \qed
\end{corollary}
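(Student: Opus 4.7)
The plan is to deduce the corollary by combining Lemma \ref{lem:smashing}, Theorem \ref{thm:stableleftlocal}, and the general properness-preservation result (Proposition \ref{prop:rightproper} together with \cite[Proposition 3.4.4]{Hir03}). The categories of symmetric spectra and EKMM $\mathbb{S}$-modules are already known to be stable, proper and cellular monoidal model categories with generating (acyclic) cofibrations $I$ and $J$, so the background hypotheses of Theorem \ref{thm:stableleftlocal} are met, and it only remains to verify that the set $\Gamma$ fits the theorem's framework.

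First I would invoke Lemma \ref{lem:smashing} to replace $L_E \Sp$ by $L_\Gamma \Sp$; since $\Gamma$ is a genuine set of maps between cofibrant objects (the spheres $\mathbb{S}^n$ are cofibrant, and after passing to a functorial cofibrant replacement of $L_E \mathbb{S}^n$ and factoring each $\Sigma^n \lambda$ as a cofibration followed by an acyclic fibration, we may assume each map of $\Gamma$ is a cofibration between cofibrant objects without altering $L_\Gamma \Sp$), this puts us into the set-indexed situation required by Theorem \ref{thm:stableleftlocal}.

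Next I would check that $\Gamma$ is a stable set in the sense of Section \ref{sec:stableleftloc}. By Lemma \ref{lem:stableconditions}, since $\Sp$ is stable it suffices to show that the collection of $\Gamma$-equivalences is closed under $\Omega$, or equivalently that $\Gamma$ itself is (up to weak equivalence) closed under $\Omega$. But by definition $\Gamma$ contains the loops of each of its maps: $\Omega(\Sigma^n \lambda) \simeq \Sigma^{n-1}\lambda$, and the indexing runs over all integers $n$. Hence $\Gamma$ is stable.

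Now Theorem \ref{thm:stableleftlocal} applies directly and tells us that $L_\Gamma \Sp$ is a cellular model category with generating cofibrations $I$ and generating acyclic cofibrations $J \cup \Lambda \Gamma$. Proposition \ref{prop:leftstable} (stability of $\Gamma$) gives that $L_\Gamma \Sp$ is stable, and the combination of \cite[Proposition 3.4.4]{Hir03} and Proposition \ref{prop:rightproper} yields that it is both left and right proper. Identifying $L_\Gamma \Sp$ with $L_E \Sp$ via Lemma \ref{lem:smashing} completes the argument. The only real work is the verification that $\Gamma$ may be taken to consist of cofibrations between cofibrant objects and that it is stable; both are essentially by inspection, so I expect no serious obstacle — the corollary is a clean packaging of the preceding results in the smashing case.
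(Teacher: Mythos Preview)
Your proposal is correct and follows exactly the route the paper intends: the corollary is marked with a \qed\ because it is meant to be read off directly from Lemma~\ref{lem:smashing} (identifying $L_E\Sp$ with $L_\Gamma\Sp$), the observation that $\Gamma$ is a stable set by construction, and then Theorem~\ref{thm:stableleftlocal} together with Propositions~\ref{prop:leftstable} and~\ref{prop:rightproper}. Your extra care in arranging $\Gamma$ to consist of cofibrations between cofibrant objects is the standard harmless replacement already noted before Theorem~\ref{thm:stableleftlocal}.
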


A further refinement on the generating sets appears as Corollary \ref{cor:spectragenerating}.

\section{Stable right localisations}\label{sec:stablerightloc}

In this section we are going to introduce the notion of right Bousfield localisation with respect to a \emph{stable} class of objects. We then proceed by showing that in this framework, the right Bousfield localisation of a stable model category remains stable. We will see that if $\C$ is a stable model category and $K$ is a stable class of maps, then $R_K \C$ (provided it exists) is left proper whenever $\C$ is. Furthermore, if $\C$ is cellular and right proper, we can specify a very convenient set of generating cofibrations and acyclic cofibrations for $R_K \C$.

Right Bousfield localisation is the dual notion to left Bousfield localisation as we have mentioned above. 
We defined $K$-coequivalences and $K$-colocal objects in Section \ref{sec:examples}.
Note that our definitions imply that any object of $K$ is $K$-colocal, but the converse is not necessarily true. Also, any weak equivalences of $\C$ is a $K$-coequivalence.

In nice cases it is possible to construct a right localisation of $\C$ with respect to $K$. We state the general result \cite[Theorem 5.1.1]{Hir03} below.

\begin{theorem}[Hirschhorn]
Let $\C$ be a right proper cellular model category and $K$ a set of objects in $\C$. Then there exists a model structure $R_K\C$ on the underlying category $\C$ such that
\begin{itemize}
\item the weak equivalences in $R_K\C$ are the $K$-coequivalences
\item the fibrations in $R_K\C$ are the fibrations of $\C$.
\end{itemize}
\end{theorem}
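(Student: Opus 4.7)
The plan is to apply the recognition theorem for cofibrantly generated model structures, taking the generating trivial cofibrations of $R_K\C$ to be the set $J$ from the model structure on $\C$, and the generating cofibrations to be $J \cup \Lambda K$, where $\Lambda K$ is the set of horns on cofibrant replacements of the objects of $K$ built via framings, in direct analogy with the $\Lambda S$ construction used in the left localisation theorem. The class of fibrations of $R_K\C$ is then characterised as the maps with the right lifting property against $J$, which agrees with the class of fibrations of $\C$ as required.

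The main identification to verify is that a map has the right lifting property against $J \cup \Lambda K$ if and only if it is a fibration of $\C$ that is also a $K$-coequivalence. For a fibration $f$ of $\C$ and a cofibrant $X \in K$, the induced map $\Map_\C(X,f)$ on simplicial sets is a Kan fibration, so the right lifting property of $f$ against horns on $X$ is equivalent to $\Map_\C(X,f)$ being a trivial Kan fibration. This is exactly the condition that $f$ be a $K$-coequivalence. Right lifting against $J$ forces $f$ to be a fibration of $\C$ in the first place, so the candidate generating cofibrations indeed detect the intended trivial fibrations of $R_K\C$. Dually, the trivial cofibrations of the intended model structure (those with left lifting against all fibrations of $\C$) coincide with the trivial cofibrations of $\C$, so $J$ genuinely generates them.

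Having identified the generating sets, I would invoke the cellularity of $\C$ to guarantee smallness of the domains of $J$ and of $\Lambda K$, and run the small object argument to obtain both required functorial factorisations. The two-out-of-three and retract axioms for $K$-coequivalences are inherited coordinatewise from the corresponding properties of simplicial weak equivalences on $\Map_\C(X,-)$ for each $X \in K$. The main obstacle is to show that the two classes of acyclic maps agree, that is, any $(J \cup \Lambda K)$-cofibration which is also a $K$-coequivalence is a retract of a $J$-cell: this is the step where the right properness of $\C$ enters essentially, used to control the interaction of $K$-coequivalences with the pullbacks and transfinite colimits appearing in the small object argument and in the compatibility of the cofibrant replacements defining $\Lambda K$ with the homotopy function complexes $\Map_\C(X,-)$.
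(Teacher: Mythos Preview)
Your proposal contains a genuine gap, and the paper itself provides an explicit counterexample to your key step. You claim that for a fibration $f$ and cofibrant $X\in K$, the right lifting property of $f$ against the horns on $X$ is equivalent to $\Map_\C(X,f)$ being a trivial Kan fibration, and hence to $f$ being a $K$-coequivalence. But the adjunction coming from framings (or even from a simplicial structure) only tells you that the \emph{strict} mapping object $F(X,f)$ is a trivial Kan fibration. The strict mapping space $F(X,B)$ computes the homotopy function complex $\Map_\C(X,B)$ only when $B$ is fibrant; for a general fibration $f\colon A\to B$ with $B$ not fibrant, having the right lifting property against $J\cup\Lambda K$ does \emph{not} force $f$ to be a $K$-coequivalence. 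The paper spells this out explicitly in the example following Theorem~\ref{thm:rightgencofibs}: in pointed simplicial sets with $K=\{A\}$ the one-vertex simplicial circle, the double cover $p\colon X\to Y$ of a simplicial model of $S^1$ has the right lifting property against $J\cup\Lambda\{A\}$ (because the strict $F(A,p)$ is an isomorphism), yet $\Map(A,p)$ is not a weak equivalence, so $p$ is not a $K$-coequivalence. Thus $J\cup\Lambda K$ is simply not a generating set of cofibrations for $R_K\C$ in general.

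The paper does not give its own proof of this theorem; it is quoted from \cite[Theorem 5.1.1]{Hir03}. Hirschhorn's argument does not proceed via the explicit set $J\cup\Lambda K$ and the recognition theorem. The point of the paper's Theorem~\ref{thm:rightgencofibs} is precisely that your proposed generating set \emph{does} work once one adds the hypothesis that $\C$ and $K$ are stable: stability lets one reduce the question of whether $f$ is a $K$-coequivalence to whether its fibre is $K$-coacyclic, and the fibre maps to the terminal object, which \emph{is} fibrant, so Hirschhorn's Proposition~5.2.4 applies there. Without stability that reduction is unavailable, and your argument breaks at exactly the place the counterexample exhibits.
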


One has to distinguish between $K$-cofibrant, cofibrant in $\C$ and $K$-colocal. Note that an object is $K$-cofibrant if and only if it is $K$-colocal and cofibrant in $\C$. The cofibrant replacement functor $Q_K$ of $R_K\C$ provides a colocalisation for an object $X$, that is, a $K$-coequivalence
\[
Q_K(X) \longrightarrow X
\]
with $Q_K(X)$ a $K$-colocal object of $\C$.

\begin{ex}\label{ex:cellular}
Let us again return to the example where $\C=\ch(R)$ and $A$ is a perfect $R$-module. In this special case, the cofibrant replacement $Q_A$ provides the \textbf{$A$-cellular approximation}
\[
\cell_A(M) \longrightarrow M.
\]
This means that $\cell_A(M)$ is ``built'' from $A$ using exact triangles and coproducts \cite[Section 4]{DwyGre02}. In this setting, cellular approximation satisfies
\[
\cell_A(M) \cong \cell_A(R) \otimes^L_{R} M,
\]
giving rise to the cofibrant replacement map
\[
\cell_A(R) \otimes^L_R M \stackrel{\sim}{\longrightarrow} M.
\]
Analogously to the definition of a smashing left localisation we can call this right localisation
\textbf{right smashing}: a right localisation of a monoidal model category $\C$ with unit $S$ is right smashing if
\[
Q_K S \otimes^L X \longrightarrow X
\]
is a $K$-cofibrant approximation for all $X$.
\end{ex}

\medskip
Dually to the local case we see that the class of $K$-coequivalences is closed under $\Omega$. Also, the class of $K$-colocal objects is closed under $\Sigma$.

\begin{definition}
Let $K$ be a class of objects in $\C$. We say that $K$ is \textbf{stable} if the class of $K$-colocal objects is also closed under $\Omega$.
\end{definition}

We also have the dual result to Lemma \ref{lem:stableconditions}:
if $\C$ is a stable model category, 
then a class of objects $K$ is stable if and only if
the collection of $K$-coequivalences objects is closed under $\Sigma$.
In particular if $K$ is closed under $\Omega$, then it is stable.

\begin{rmk}
As with remark \ref{rmk:usehomotopyclasses}, we see that if $K$ is a stable set of objects
then a map $f \co X \to Y$ is a $K$-coequivalence if and only if 
\[
[k,f]_*^{\C} \co [k,X]_*^{\C} \longrightarrow [k,Y]_*^{\C}
\] 
is an isomorphism of graded abelian groups for all 
$k \in K$. 
Similarly, $A$ is $K$-colocal if and only if for all $K$-coequivalences 
$f \co X \to Y$, the map  
\[
[A,f]_*^{\C} \co [A,X]_*^{\C} \longrightarrow [A,Y]_*^{\C}
\] 
is an isomorphism of graded abelian groups.
\end{rmk}

\begin{ex}
The case of $A$-torsion modules for a perfect $R$-module $A$ provides an example of a class of stable colocal objects.
\end{ex}

\begin{proposition}\label{prop:rightstable}
Let $\C$ be a stable model category and $K$ a stable class of objects. Assume that $R_K\C$ exists. Then $R_K\C$ is also stable. \qed
\end{proposition}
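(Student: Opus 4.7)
The plan is to dualise the proof of Proposition \ref{prop:leftstable} almost verbatim, trading $\Omega$ for $\Sigma$, "$S$-local" for "$K$-colocal", and "$\Sigma$ preserves $S$-local" (the content of stability of $S$) for "$\Omega$ preserves $K$-colocal" (the content of stability of $K$).

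More concretely, I would first identify $\Ho(R_K\C)$ with the full subcategory of $\Ho(\C)$ whose objects are the $K$-colocal objects; this is because the cofibrant replacement $Q_K$ of $R_K\C$ produces a $K$-colocal object and every $K$-coequivalence between $K$-colocal objects is a weak equivalence in $\C$. Next, I would observe that the suspension functor $\Sigma \co \Ho(R_K\C) \to \Ho(R_K\C)$ defined via framings in the model category $R_K\C$ agrees with the restriction of $\Sigma \co \Ho(\C)\to \Ho(\C)$ to this full subcategory; the restriction is defined because the class of $K$-colocal objects is automatically closed under $\Sigma$ (this closure does not need stability of $K$, as noted in the discussion preceding the definition of a stable class of objects).

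Since this $\Sigma$ is the restriction of an equivalence of categories to a full subcategory, it is automatically fully faithful on $\Ho(R_K\C)$. The only thing left is essential surjectivity, which is exactly where the stability hypothesis on $K$ is used. Given any $K$-colocal object $X$, stability of $K$ says that $\Omega X$ is again $K$-colocal, so $\Omega X$ lies in $\Ho(R_K\C)$; the counit of the adjunction $(\Sigma,\Omega)$ on $\Ho(\C)$ gives an isomorphism $\Sigma \Omega X \to X$ in $\Ho(\C)$, and because $\Ho(R_K\C)$ is a full subcategory of $\Ho(\C)$, this is an isomorphism in $\Ho(R_K\C)$. Hence $\Sigma$ is essentially surjective, therefore an equivalence, and $R_K\C$ is stable.

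The converse (that if $R_K\C$ is stable then $K$ must be stable) proceeds exactly as in Proposition \ref{prop:leftstable}: if $X$ is $K$-colocal, essential surjectivity of $\Sigma$ on $\Ho(R_K\C)$ produces a $K$-colocal $Y$ with $\Sigma Y \cong X$, and applying $\Omega$ in $\Ho(\C)$ yields $Y \cong \Omega X$, showing $\Omega X$ is $K$-colocal. I do not expect any genuine obstacle here; the proof is purely formal, and the only subtlety to keep straight is the asymmetry in which direction of closure (under $\Sigma$ versus $\Omega$) is automatic and which uses the stability hypothesis.
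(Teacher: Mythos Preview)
Your proposal is correct and is precisely the approach the paper intends: the paper omits the proof entirely, stating only that it is ``very similar to the proof of Proposition \ref{prop:leftstable}'', and your dualisation (swapping $\Sigma$ for $\Omega$, $S$-local for $K$-colocal, unit for counit) is exactly that similar proof. The only minor remark is that the stated proposition asserts just the forward implication, so your converse paragraph is supplementary rather than required.
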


We omit the proof since it is very similar to the proof of Proposition \ref{prop:leftstable}.

\bigskip
We can always make a set of objects stable, but this usually changes the resulting 
model structure and homotopy category drastically. 
\begin{lemma}
Let $K$ be a class of cofibrant objects in a stable model category $\C$.
Define $\Omega^\infty K$ 
to be the collection of objects $Q{\Omega^n X}$ for $X \in K$
and $n \geqslant 0$. 
Then, provided it exists, 
$L_{\Omega^\infty K} \C$ is a stable model category.
Furthermore $K$ is stable if and only if 
$L_{\Omega^\infty K} \C$ is equal to $L_K \C$. \qed
\end{lemma}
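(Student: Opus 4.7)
The plan is to deduce both parts of the lemma from Proposition \ref{prop:rightstable} (used as an iff, as in its left analogue Proposition \ref{prop:leftstable}) after first characterising $\Omega^\infty K$-coequivalences in terms of graded homs.

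First I would translate the defining condition through the dual of Remark \ref{rmk:usehomotopyclasses}. A map $f \co X \to Y$ is an $\Omega^\infty K$-coequivalence if and only if $[Q\Omega^n k, f]_m$ is an isomorphism for every $k \in K$, $n \geqslant 0$ and $m \geqslant 0$. The stability of $\C$ supplies the adjunction $[\Omega^n k, -]_m \cong [k,-]_{m-n}$, so this reduces to the cleaner requirement that $[k,f]_i$ be an isomorphism for every $k \in K$ and every integer $i \in \mathbb{Z}$. The key point is that letting $n$ sweep through the non-negative integers promotes the usual non-negative degree information to all degrees.

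From this characterisation it is immediate that the class of $\Omega^\infty K$-coequivalences is closed under $\Sigma$, so by the dual of Lemma \ref{lem:stableconditions} the class $\Omega^\infty K$ is stable. Proposition \ref{prop:rightstable} then gives that $L_{\Omega^\infty K}\C$ is stable, settling the first claim. For the equivalence, if $K$ is stable then each $\Omega^n k$ is $K$-colocal, so any $K$-coequivalence $f$ induces isomorphisms on $[Q\Omega^n k, f]_m$ for $n,m \geqslant 0$, making $f$ an $\Omega^\infty K$-coequivalence. The reverse containment is immediate because $QK \subset \Omega^\infty K$ as the $n = 0$ piece, with $Qk \simeq k$. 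Since the two localisations share the fibrations of $\C$ and now also share their weak equivalences, they coincide as model structures. Conversely, if $L_{\Omega^\infty K}\C = L_K \C$ then $L_K \C$ is stable by the first part, and the iff version of Proposition \ref{prop:rightstable} returns that $K$ itself is stable.

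The main piece of care is the degree bookkeeping: one has to confirm that the union over $n \geqslant 0$ genuinely upgrades non-negative degree information to all integer degrees, and that this step does not quietly presuppose stability of $K$. Once that is pinned down, everything else is a direct application of the machinery established earlier in Section \ref{sec:stablerightloc}.
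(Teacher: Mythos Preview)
Your argument is correct. The paper supplies no proof for this lemma (it is marked \qed\ immediately after the statement), so there is nothing to compare against; your write-up is exactly the kind of routine verification the authors are leaving to the reader, using only the machinery already in place in Section \ref{sec:stablerightloc}.

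Two small remarks. First, the notation $L_{\Omega^\infty K}\C$ in the statement is evidently a slip for $R_{\Omega^\infty K}\C$, and you have correctly read it that way throughout by working with coequivalences and colocal objects. Second, your appeal to the ``iff version'' of Proposition \ref{prop:rightstable} is legitimate: although the paper only states one implication, the sentence immediately following it (``We omit the proof since it is very similar to the proof of Proposition \ref{prop:leftstable}'') makes clear that the full biconditional holds by dualising the argument of Proposition \ref{prop:leftstable}. You were right to flag this explicitly rather than use it silently.
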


We know that the right localisation of a right proper model category is again right proper 
by \cite[Theorem 5.1.5]{Hir03}. If $K$ is stable, then we also see that $R_K\C$ is left proper whenever $\C$ is. 

\begin{proposition}
Let $\C$ be a stable left proper model category. Let $K$ be a stable class of objects. If $R_K\C$ exists, then it is left proper.
\end{proposition}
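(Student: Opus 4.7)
The plan is to dualise the proof of Proposition \ref{prop:rightproper}. Consider a pushout square
\[
\xymatrix{
X \ar[r]^{u} \ar[d]_{f} & Y \ar[d]^{g} \\
X' \ar[r]_{u'} & Y'
}
\]
where $f$ is a cofibration in $R_K\C$ and $u$ is a $K$-coequivalence, and show that $u'$ is a $K$-coequivalence. A short preliminary observation is needed: since acyclic fibrations of $\C$ are in particular acyclic fibrations of $R_K\C$, every cofibration in $R_K\C$ is a cofibration in $\C$. Hence $f$ is a cofibration in the left proper model category $\C$.

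Next, I would use left properness of $\C$ together with the dual of \cite[Proposition 13.4.6]{Hir03} to see that the cofibres of $f$ and $g$ agree with their homotopy cofibres. Because the square is a pushout in $\C$, the cofibre of $f$ coincides with the cofibre of $g$, so the induced comparison map of homotopy cofibres is a weak equivalence in $\C$, and in particular a $K$-coequivalence. This yields a comparison of cofibre sequences in $\Ho(\C)$
\[
\xymatrix{
X \ar[r]^{f} \ar[d]_{u} & X' \ar[r] \ar[d]^{u'} & Cf \ar[r] \ar[d]^{\cong} & \Sigma X \ar[d]^{\Sigma u} \\
Y \ar[r]_{g} & Y' \ar[r] & Cg \ar[r] & \Sigma Y.
}
\]

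By Proposition \ref{prop:rightstable}, stability of $K$ ensures that $R_K\C$ is itself a stable model category, so $\Ho(R_K\C)$ carries a triangulated structure, and the cofibre sequence coming from a cofibration of $\C$ (and hence from the cofibration $f$ of $R_K\C$) gives a distinguished triangle in $\Ho(R_K\C)$. Thus the diagram above is a morphism of distinguished triangles in $\Ho(R_K\C)$. Moreover, by the dual of Lemma \ref{lem:stableconditions}, the class of $K$-coequivalences is closed under $\Sigma$, so $\Sigma u$ is a $K$-coequivalence, i.e.\ an isomorphism in $\Ho(R_K\C)$. The middle vertical map $Cf \to Cg$ is also an isomorphism in $\Ho(R_K\C)$ since it is already a weak equivalence in $\C$. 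The five lemma for triangulated categories then forces $u'$ to be an isomorphism in $\Ho(R_K\C)$, i.e.\ a $K$-coequivalence, completing the argument.

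The main obstacle is the bookkeeping in the middle paragraph: namely, verifying that the cofibre sequence of $f$ in $\C$ really is a distinguished triangle in the triangulated category $\Ho(R_K\C)$, so that we are entitled to apply the triangulated five lemma there. This rests on the fact that suspension in $\Ho(R_K\C)$ is induced by the same framings as in $\Ho(\C)$ and that cofibre sequences in any stable model category produce the distinguished triangles of its homotopy category, which is exactly the formal content exploited in the proof of Proposition \ref{prop:rightproper}.
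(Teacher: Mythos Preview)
Your proof is correct and follows essentially the same approach as the paper: set up the pushout square, observe that a $K$-cofibration is a $\C$-cofibration so left properness of $\C$ identifies cofibres with homotopy cofibres, note that the two cofibres agree because of the pushout, invoke Proposition \ref{prop:rightstable} to get a morphism of distinguished triangles in $\Ho(R_K\C)$, and finish with the triangulated five lemma. The paper's argument is identical up to relabelling and orientation of the square; your extra remark that $g$ is a pushout of the $K$-cofibration $f$ (hence itself a $K$-cofibration) and your closing discussion about cofibre sequences yielding triangles in $\Ho(R_K\C)$ are exactly the implicit points the paper is relying on.
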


\begin{proof}
Consider a pushout
\[
\xymatrix{ A \ar[d]_{f} \ar[r]^{p} & C \ar[d]^{g} \\
B \ar[r]_{q} & P
}
\]
where $p$ is a $K$-cofibration and $f$ is a $K$-coequivalence. 
We see immediately that $q$ is a $K$-cofibration. 
We would like to show that $g$ is a $K$-coequivalence. 

Since $\C$ is left proper, the cofibre of $p$ (the pushout of $p$ along 
$A \to \ast$) is also the homotopy cofibre $Cp$ of $p$.
Similarly, the cofibre of $q$ agrees with the homotopy cofibre $Cq$ of $q$. 
Since we have a pushout, the two cofibres are isomorphic, 
hence the map $c$ below is a weak equivalence in $R_K \C$. 
By Proposition \ref{prop:rightstable}, $R_K\C$ is stable, so the following is 
a morphism of exact triangles
\[
\xymatrix{ A \ar[r]\ar[d]^{f}_{\sim} & C\ar[r]\ar[d]^{g}  & Cp\ar[r]\ar[d]^{c}_{\sim} & \Sigma A \ar[d]^{\Sigma f}_{\sim}\\
B \ar[r] & P\ar[r] & Cq \ar[r]& \Sigma B.
}
\]
By the five-lemma for triangulated categories $g$ is a $K$-coequivalences.
\end{proof}

We know that $R_K\C$ has the same fibrations (and hence acyclic cofibrations) as $\C$ but fewer cofibrations. Generally, it is very hard to specify a set of generating cofibrations for $R_K\C$. However, if $\C$ and $K$ is stable, we are going to obtain a convenient description.

Following the previous section, a set of \textbf{horns} on $K$ is defined as
\[
\Lambda  K = \{ X \otimes \partial \Delta[n]_+ \longrightarrow 
X  \otimes \Delta[n]_+
\,\,|\,\, n \ge 0, X \in K\}.
\]
Remember that the operation $\otimes$ is defined via 
framings in $\C$ as in Section \ref{sec:modelcats}.
We have assumed that the set $K$ consists of cofibrant objects, 
so $\Lambda  K $ consists of cofibrations of $\C$. 

\begin{theorem}\label{thm:rightgencofibs}
Let $\C$ be a stable, right proper, cellular model category with a set of generating cofibrations $I$ and generating acyclic cofibrations $J$. 
Let $K$ be a stable set of  cofibrant objects.
Then $R_K\C$ is cellular with generating cofibrations $J \cup \Lambda K$ and acyclic cofibrations $J$.
\end{theorem}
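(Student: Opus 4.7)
The strategy is to dualise the proof of Theorem \ref{thm:stableleftlocal}. The existence of $R_K\C$ follows at once from Hirschhorn's Theorem 5.1.1, and since the fibrations of $R_K\C$ coincide with those of $\C$, the set $J$ automatically generates the acyclic cofibrations of $R_K\C$. The real content is to show that $J \cup \Lambda K$ detects the acyclic fibrations of $R_K\C$, which are precisely the fibrations of $\C$ that are also $K$-coequivalences.

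The key tool is the Quillen adjunction $X \otimes (-) \dashv \Map_l(X,-)$ coming from framings, available for each cofibrant $X \in K$. By adjunction, a map $p \co A \to B$ has the right lifting property against every horn $X \otimes \partial \Delta[n]_+ \to X \otimes \Delta[n]_+$ in $\Lambda K$ if and only if $\Map_l(X,p)$ has the right lifting property against every $\partial \Delta[n]_+ \to \Delta[n]_+$, i.e.\ is an acyclic fibration of pointed simplicial sets. With this in hand, the characterisation is immediate. If $p$ is a fibration in $\C$ and a $K$-coequivalence, then for each $X \in K$ the map $\Map_l(X,p)$ is a fibration of simplicial sets (from the Quillen property) and also a weak equivalence (because $\Map_l(X,-) \simeq R\Map(X,-)$), so $p$ has RLP against $\Lambda K$; it also has RLP against $J$ since it is a fibration in $\C$. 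Conversely, if $p$ has RLP against $J \cup \Lambda K$, then RLP against $J$ makes $p$ a fibration in $\C$, whence each $\Map_l(X,p)$ is a fibration; then RLP against $\Lambda K$ upgrades this to an acyclic fibration, so $p$ is a $K$-coequivalence.

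Cellularity of $R_K\C$ is the final piece, checked against \cite[Definition 12.1.1]{Hir03}. Observe that the cofibrations of $R_K\C$ form a subclass of the cofibrations of $\C$, since the acyclic fibrations of $R_K\C$ contain those of $\C$; hence they are automatically effective monomorphisms. Moreover, the horns in $\Lambda K$ really are cofibrations of $R_K\C$, since they lift against any acyclic fibration of $R_K\C$ by the characterisation above. Smallness of the domains of $J$ is inherited from cellularity of $\C$, and for $\Lambda K$ the domains and codomains are of the form $X \otimes \partial \Delta[n]_+$ and $X \otimes \Delta[n]_+$ with $X$ cofibrant; such $X$ are small relative to $I$ by \cite[Lemma 12.4.2]{Hir03}, and this smallness is preserved by tensoring with the finite pointed simplicial sets $\partial \Delta[n]_+$ and $\Delta[n]_+$. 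I expect the main point of care will be keeping the various weakly equivalent mapping space constructions ($\Map_l$, $R\Map$, $\Map_\C$) straight, though this is ultimately routine since they all agree up to weak equivalence.
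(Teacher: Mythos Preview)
Your argument has a genuine gap in the converse direction, and it is precisely the one the paper's proof is designed to avoid. You claim that if $p \co A \to B$ has the right lifting property against $J \cup \Lambda K$, then $\Map_l(X,p)$ is an acyclic fibration and hence $p$ is a $K$-coequivalence. But $K$-coequivalences are defined via the \emph{homotopy} function complex $R\Map(X,-)$, and your parenthetical ``$\Map_l(X,-) \simeq R\Map(X,-)$'' is only valid on \emph{fibrant} targets: $\Map_l(X,-)$ is right Quillen, so it computes the derived functor only after fibrant replacement. For non-fibrant $B$, there is no reason for $\Map_l(X,p)$ being a weak equivalence to imply that $R\Map(X,p)$ is one. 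Your closing remark that keeping the mapping-space variants straight ``is ultimately routine since they all agree up to weak equivalence'' is exactly the false step.

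Notice that your argument never invokes the stability hypothesis on $K$, so if it were correct it would prove the theorem for arbitrary $K$ in any right proper cellular $\C$. The example immediately following the theorem (the double cover of the circle in $\SSet_*$, cf.\ \cite[Example~5.2.7]{Hir03}) shows this is false: there $F(A,p) = \Map_l(A,p)$ is an isomorphism, so $p$ has the right lifting property against $J \cup \Lambda\{A\}$, yet $p$ is not an $\{A\}$-coequivalence because $\Map_\C(A,p)$ is not a weak equivalence. The paper's proof repairs this by using stability: rather than analyse $p$ directly, one observes that $p$ is a $K$-coequivalence if and only if its fibre $F$ is $K$-coacyclic, and then applies the lifting characterisation to $F \to \ast$, where the target \emph{is} fibrant and \cite[Proposition~5.2.4]{Hir03} applies. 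Your forward direction has the same fibrancy issue, though there one can simply cite \cite[Proposition~5.2.5]{Hir03} instead.
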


\begin{proof}
We know that the model structure exists, is stable and is right proper.
We prove that $R_K \C$ is a cellular model category, 
via \cite[Theorem 12.1.9]{Hir03}. The various
smallness and compactness arguments follow from the 
corresponding statements for  $\C$ and the 
fact that $K$ consists of cofibrant objects. 

All that remains is to show that a map $f$ is a trivial $K$-fibration if and only if it has the right lifting property with respect to $J \cup \Lambda K$.
By \cite[Proposition 5.2.5]{Hir03} the maps of $J \cup \Lambda K$ are cofibrations
of $R_K \C$. Hence if $f$ is a fibration and $K$-coequivalence then $f$ has the right lifting property with respect to $J \cup \Lambda K$.
Now assume that $f$ has the right lifting property with respect to $J\cup\Lambda K$. Since $f$ has the right lifting property with respect to $J$, it is a fibration in $\C$ and hence it is a fibration in $R_K\C$. Now we want to show that $f$ is a $K$-coequivalence. 

By \cite[Proposition 5.2.4]{Hir03} a map $g: A \longrightarrow B$ 
with $B$ \emph{fibrant} has the right lifting property 
with respect to $J \cup \Lambda K$ if and only if 
$g$ is a fibration and a $K$-coequivalence. 
However, this is not true for general $B$ and we cannot simply assume $B$ to be fibrant.

However, we are working in a stable setting. Since $R_K\C$ is stable, $f$ being a $K$-coequivalence is equivalent to asking for its fibre (which in our setting is also its homotopy fibre) to be $K$-coacyclic. The fibre $F$ is the pullback of the diagram
\[
\ast \longrightarrow B \stackrel{f}{\longleftarrow} A.
\]
As $f$ has the right lifting property with respect to $J \cup\Lambda K$ and $F$ is a pullback, $F \longrightarrow \ast$ also has this right lifting property. The terminal object $\ast$ is fibrant, so by \cite[Proposition 5.2.4]{Hir03} $F$ is $K$-coacyclic, which is what we needed to prove. 
\end{proof}

\begin{rmk}\label{rmk:rightnoncellular}
Just as with Remark \ref{rmk:noncellular} we can replace the assumption that 
$\C$ is cellular with the assumption that $\C$ is cofibrantly generated
and the domains of $J \cup \Lambda K$ are small with respect
to the class of transfinite compositions of pushouts of $J \cup \Lambda K$. Thus, the theorem also 
provides a refinement of the general existence theorem of right localisations for the stable case.
\end{rmk}

The theorem is again an improvement on the general setting where $\C$ has not been assumed to be stable. 
Without stability, the results of \cite{Hir03} only prove the existence of some set
of generating cofibrations. Indeed, the set $J \cup \Lambda K$ is not always a generating
set of cofibrations for $L_S \C$, as shown by \cite[Example 5.2.7]{Hir03} which we will spell out now.

\begin{ex}
Consider the model category of pointed simplicial sets $\SSet_*$. 
Let $A$ be the quotient of $\Delta[1]$ obtained by identifying the 
the vertices of $\Delta[1]$. The geometric realisation of this simplicial
set is homeomorphic to the circle.  We consider the right localisation of $\SSet_*$ with respect to $K=\{A\}$, having one $0$-simplex and one $1$-simplex.

Let $Y$ be $\partial \Delta[2]$, whose geometric
realisation is also homeomorphic to the circle.  
Let $X$ be the simplicial set built from six $1$-simplices
with vertices identified so that the geometric realisation of $X$ is a circle. 
There is a fibration $p \co X \to Y$, whose geometric realisation
is the double covering of the circle. 

Now let $F(A,X)$ denote the simplicial set of maps from $A$ to $X$. We observe that $F(A,X)$ has only one simplex in each degree. The reason for this is the fact that
the only pointed map from $A$ to $X$ is the constant map to the basepoint. By induction, 
this also holds for maps from $A \smashprod \Delta[n]_+$ to $X$. The same is true for $F(A,Y)$, so 
\[
F(A,p): F(A,X) \longrightarrow F(A,Y)
\]
is an isomorphism.  

The map $p$ is a fibration, so it has the right lifting property with respect to $J$. 
The above argument shows that $p$ also has the right lifting property with respect to $\Lambda(A)$, hence it has the right lifting property with respect to $J \cup \Lambda(A)$. 

But $p$ is not a $K$-coequivalence as we shall show now. 
Consider the map below, which is induced by $p$
\[
\sing \hom(|A|,|p|): \sing \hom(|A|,|X|) \longrightarrow \sing \hom(|A|,|Y|)
\]
where $\hom(|A|,|X|)$ denotes the space of maps between the topological spaces $|A|$ and $|X|$ and $\sing$ the singular complex functor. However, since $|p|$ is a double cover of the circle, the map 
\[
\pi_0(\sing \hom(|A|,|p|)): \pi_0(\sing \hom(|A|,|X|)) \longrightarrow \pi_0 \sing (\hom(|A|,|Y|))
\]
is multiplication by $2$ on the integers. Thus, $\sing \hom(|A|,|p|)$ is not a weak equivalence.
Now we note that for any simplicial sets $P$ and $Q$, 
$\Map(P,Q)$ is naturally weakly equivalent to 
$\sing \hom(|P|,|Q|)$. Thus 
$\Map(A,p)$ is not a weak equivalence as claimed. 
\end{ex}

\section{Monoidal Left Localisations}\label{sec:monoidalleft}

Let $\C$ be a cellular and left proper model category and let
$S$ be a set of maps in $\C$.
Then we can ask the following: if $\C$ is monoidal, when is $L_S \C$ also
monoidal? When $\C$ is stable, we can use our preceding results to examine monoidality in a convenient way.

For this we need to know that $L_S \C$
satisfies the pushout product axiom. Recall that the \textbf{pushout-product} of two maps $f:A \rightarrow B$ and $g: C\rightarrow D$ is defined as
\[
f \square g: A \otimes D \coprod_{A \otimes C} B \otimes C \longrightarrow B \otimes D.
\]
A model category with monoidal product and unit $(\C, \otimes, \mathbb{S})$ is a \textbf{monoidal model category} if the pushout-product of two cofibrations is again a cofibration which is trivial if either $f$ or $g$ is. Further, then unit $\mathbb{S}$ of $\C$ has to satisfy a cofibrancy condition, see \cite[Definition 4.2.6]{Hov99}.

Thus, the usual method to examine monoidality of a model category is to examine its sets of
generating cofibrations and
generating acyclic cofibrations.
By Theorem \ref{thm:stableleftlocal} we know that if $\C$ is stable and 
proper and that $S$ is a stable set of
cofibrations between cofibrant objects, 
then the generating 
acyclic cofibrations of $L_S\C$ have the form $J \cup \Lambda S$.
Since $\C$ is assumed to be monoidal we know that $J \square I$ consists of weak equivalences in $\C$.
Thus if $\Lambda S \square I$ is a set 
of $S$-equivalences, then $L_S \C$ is monoidal.  
Conversely, if $L_S \C$ is monoidal, then 
$\Lambda S \square I$ consists of $S$-equivalences. 

Now we apply \cite[Theorem 5.6.5]{Hov99}, 
which essentially states that framings and monoidal products interact well,
to see that the image of the set 
$\Lambda S \square I$ in $\Ho(\C)$ is isomorphic to the image of 
the set $\Lambda (S \square I)$ in $\Ho(\C)$.
Thus $\Lambda S \square I$ consists of $S$-equivalences
if and only if 
$\Lambda (S \square I)$ consists of $S$-equivalences.
Furthermore $S \square I$ consists of $S$-equivalences if and only if 
$\Lambda (S \square I)$ consists of $S$-equivalences.
Hence we have the following result
and definition. 

\begin{lemma}\label{lem:square}
Let $\C$ be a proper, cellular and monoidal stable model category.
Let $S$ be a stable set of cofibrations between cofibrant objects. 
Then the set $S \square I$ is contained in the class of $S$-equivalences
if and only if $L_S \C$ is a monoidal model category. \qed
\end{lemma}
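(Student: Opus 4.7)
The plan is essentially to unwind the discussion preceding the lemma and verify carefully that nothing is missing. First I would recall that by Theorem \ref{thm:stableleftlocal}, under the assumptions on $\C$ and $S$, the model category $L_S\C$ is cellular with generating cofibrations $I$ (unchanged from $\C$) and generating acyclic cofibrations $J \cup \Lambda S$. Since cofibrations are unchanged and the unit $\mathbb{S}$ is already suitably cofibrant in $\C$, the unit axiom and the ``cofibration $\square$ cofibration is a cofibration'' part of the pushout-product axiom transfer immediately to $L_S\C$. Hence $L_S\C$ is monoidal if and only if the pushout-product of any generating acyclic cofibration with any generating cofibration is an $S$-acyclic cofibration.

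Next I would reduce this to a statement purely about $\Lambda S \square I$. The pushout-product $J \square I$ consists of acyclic cofibrations in $\C$ by monoidality of $\C$, and these are automatically $S$-equivalences. Moreover, $(J \cup \Lambda S) \square I = (J \square I) \cup (\Lambda S \square I)$, and all maps in either set are cofibrations of $\C$. Therefore $L_S\C$ satisfies the pushout-product axiom if and only if every map in $\Lambda S \square I$ is an $S$-equivalence.

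The main obstacle is then the comparison between $\Lambda S \square I$ and $\Lambda (S \square I)$. Here I would invoke \cite[Theorem 5.6.5]{Hov99}, which tells us that framings are compatible with monoidal products up to natural weak equivalence on the level of homotopy categories. Applied to the maps of $S$ and of $I$ (all of which are cofibrations between cofibrant objects, so well-behaved under framings), this identifies the image of $\Lambda S \square I$ in $\Ho(\C)$ with the image of $\Lambda (S \square I)$. Thus the two sets have the same $S$-equivalences.

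Finally, I would observe that $\Lambda T$ consists of $S$-equivalences if and only if $T$ does, for any set $T$ of cofibrations between cofibrant objects: each element $(A \to B) \square (\partial\Delta[n]_+ \to \Delta[n]_+)$ is weakly equivalent (using the framings) to a map built from $A \otimes \Delta[n]_+ \to B \otimes \Delta[n]_+$, which is an $S$-equivalence iff $A \to B$ is, since tensoring with a simplicial set preserves $S$-equivalences between cofibrant objects (the class of $S$-equivalences is closed under suspension by stability of $S$, and then under all framings by a standard argument). Putting the chain together, $L_S\C$ is monoidal iff $\Lambda S \square I$ consists of $S$-equivalences iff $\Lambda (S \square I)$ does iff $S \square I$ does, which is precisely the claim.
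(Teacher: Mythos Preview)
Your proposal is correct and follows essentially the same route as the paper's argument, which is contained in the paragraph immediately preceding the lemma (the lemma itself carries only a \qed). Both reduce monoidality of $L_S\C$ to the condition that $\Lambda S \square I$ consists of $S$-equivalences via Theorem~\ref{thm:stableleftlocal}, then use \cite[Theorem 5.6.5]{Hov99} to pass to $\Lambda(S \square I)$, and finally identify this with $S \square I$ consisting of $S$-equivalences; you simply spell out a few steps (the unit axiom, the $n=0$ case recovering $T$ from $\Lambda T$) that the paper leaves implicit.
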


\begin{definition}
A stable set of cofibrations $S$ in a monoidal model category $\C$ is said to be 
\textbf{monoidal} if 
$S \square I$ is contained in the class of $S$-equivalences. 
\end{definition}

We can use this to restate a well-known fact.
\begin{ex}\label{ex:smodules}
The generating set $\mathcal{J}$ of $E_*$-equivalences in $\MSp$, 
the model category of EKMM $\mathbb{S}$-modules, is monoidal. 
This follows from the fact that if $f$ is an $E_*$-equivalence and $A$ is a cofibrant spectrum, 
then $f \otimes A$ is also an $E_*$-equivalence. 
Hence, by Lemma \ref{lem:square}, $L_E (\MSp)$ is a monoidal model category.
\end{ex}

\begin{lemma}\label{lem:squaremonoidal}
Let $\C$ be a proper, stable, cellular, monoidal model category. 
Assume that $S$ is a stable set of cofibrations between cofibrant objects. 
Then $S \square I$ is a monoidal stable set of maps. 
Hence $L_{S \square I} \C$ is a stable monoidal model category in which 
the maps $S$ are weak equivalences. 
\end{lemma}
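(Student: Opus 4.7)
The plan is to establish three things: (i) $S \square I$ is a stable set of maps, (ii) $(S \square I) \square I$ is contained in the class of $(S \square I)$-equivalences, and (iii) every map of $S$ is an $(S \square I)$-equivalence. Granting these, Proposition \ref{prop:leftstable} gives that $L_{S \square I}\C$ is stable, Lemma \ref{lem:square} gives that $L_{S \square I}\C$ is monoidal, and (iii) is the final assertion. Note that $S \square I$ automatically consists of cofibrations between cofibrant objects, by the pushout-product axiom and the cofibrancy hypotheses on $S$ and on the domains/codomains of $I$.

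For (i), I would use the closed monoidal structure of $\C$. The pushout-product/pullback-hom adjunction, together with the stable identification of the homotopy fibre of $F(i, Z) \co F(D, Z) \to F(C, Z)$ with $F(Ci, Z)$ for the cofibre sequence $C \to D \to Ci$, shows that $Z$ is $(S \square I)$-local if and only if $F(Ci, Z)$ is $S$-local for every $i \co C \to D$ in $I$. Because $\Sigma \mathbb{S}$ is invertible in the stable monoidal category $\Ho(\C)$, one has $F(Ci, \Sigma Z) \simeq \Sigma F(Ci, Z)$, and the hypothesis that $S$ is stable then yields that $\Sigma Z$ is $(S \square I)$-local whenever $Z$ is. This proves (i); by Proposition \ref{prop:leftstable}, $L_{S \square I}\C$ is then stable, and the class of $(S \square I)$-acyclic objects forms a localising subcategory of $\Ho(\C)$.

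Parts (ii) and (iii) then follow from a single key claim: for $s \in S$ and any cofibration $g$ between cofibrant objects of $\C$, the pushout-product $s \square g$ is an $(S \square I)$-equivalence. The cofibre of $s \square g$ is $Cs \otimes^L Cg$, and since $g$ is a cofibration between cofibrant objects, $Cg$ lies in the localising subcategory of $\Ho(\C)$ generated by $\{Ci : i \in I\}$ (every cofibrant object is a retract of an $I$-cell complex, and so is built from the cells $Ci$ via cofibre sequences and coproducts). The functor $Cs \otimes^L -$ is exact and commutes with coproducts, so it sends this subcategory into the localising subcategory generated by $\{C(s \square i) : i \in I\}$, which by (i) is contained in the $(S \square I)$-acyclics. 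Hence $s \square g$ is an $(S \square I)$-equivalence. Applying the claim to $g = i \square j$ for $i, j \in I$, combined with associativity $(s \square i) \square j = s \square (i \square j)$, gives (ii); applying it to the cofibration $\emptyset \to Q\mathbb{S}$ gives $s \square (\emptyset \to Q\mathbb{S}) \simeq s \otimes \mathbb{S} \cong s$ in $\Ho(\C)$, giving (iii).

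The main obstacle is step (i), namely the characterisation of $(S \square I)$-local objects via the internal hom. It relies on the closedness of the monoidal structure, the compatibility of framings with the internal hom, and the identification of fibres with cofibres afforded by stability. Once this characterisation is in hand, (ii) and (iii) reduce to a clean triangulated-category argument in $\Ho(\C)$ using only the exactness of $Cs \otimes^L -$ and the cellular description of cofibrant objects.
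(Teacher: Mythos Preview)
Your proposal is correct, and the overall structure---establish stability of $S \square I$, then monoidality via $(S \square I) \square I \subseteq (S \square I)$-equivalences, then $S \subseteq (S\square I)$-equivalences---matches the paper. However, you reverse the internal logic and use heavier tools than the paper does.

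The paper first proves the key claim directly at the model-category level: any cofibration $a$ is a retract of a transfinite composition of pushouts of elements of $I$, so $s \square a$ is a retract of a transfinite composition of pushouts of elements of $S \square I$; since $L_{S \square I}\C$ exists (Hirschhorn, using only left properness and cellularity), maps in $S \square I$ are acyclic cofibrations there and this class is closed under those colimits, giving the claim with no appeal to stability or to the closed structure. Stability then falls out in one line: $(s \square i) \otimes \mathbb{S}^{-1} \cong s \square (i \otimes \mathbb{S}^{-1})$, and $i \otimes \mathbb{S}^{-1}$ is a cofibration, so the key claim applies. By contrast, you prove stability first via an internal-hom characterisation of $(S\square I)$-local objects, and only then argue the key claim using that the acyclics form a localising subcategory. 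Your route is more conceptual and makes the triangulated structure do the work, but it relies on the closed monoidal structure and on the (correct but not entirely trivial) identification of $(S\square I)$-local $Z$ with those $Z$ for which each $F(Ci,Z)$ is $S$-local. The paper's route is shorter and more elementary: it never needs the internal hom explicitly, and it does not need stability to get the key claim.
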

\begin{proof}

Take any $s \in S$ and any cofibration $a$.  
Then the map
$a$ is a retract of pushouts of transfinite compositions
of maps in $I$. Hence $s \square a$ 
a retract of pushouts of transfinite compositions
of maps in the set $S \square I$. 
Thus $s \square a$ is an $S \square I$-equivalence. 

We need to check that $S \square I$ is still stable, 
so consider some $s \square i$. 
Let $\sphspec^{-1}$ be some cofibrant desuspension of the unit
$\sphspec$ of $\C$. 
We know that $(s \square i) \otimes \sphspec^{-1}$ is
isomorphic to $s \square (i \otimes \sphspec^{-1})$, 
which, by the above, is an $S \square I$-equivalence.
It follows immediately that the $S \square I$-equivalences
are closed under desuspension, so our set is stable. 

Now we must check that $(\Lambda(S \square I)) \square I$
consists of $S \square I$-equivalences. But every element in $\Lambda( S \square (I \square I ))$ is weakly equivalent to an element in $(\Lambda(S \square I) \square I)$. We know that any map in
$S \square (I \square I)$ is an $S \square I$-equivalence and a cofibration. 
Furthermore a horn on such a map is still an $S \square I$-equivalence.

Finally, to see that $S$ consists of $S \square I$-equivalences, 
consider the cofibration $\eta \co \ast \to Q \sphspec$. 
For any $s \in S$, $s \square \eta$ is isomorphic to 
$s \otimes Q \sphspec$, which is weakly equivalent to $s$ 
since the domains and codomains of $S$ are cofibrant. 
\end{proof}

We may also conclude that if $S$ is monoidal, then 
$L_{S \square I} \C$ is equal to $L_S \C$. 
Usually, however, localising at 
${S \square I}$ and $S$ give different model categories. 
While the above result makes more maps into weak equivalences
than we might want, it actually does so in quite a minimal way, as the result
below shows. We can think of this as saying that 
$L_{S \square I} \C$ is the \emph{monoidal} left Bousfield localisation of 
$\C$ at the stable set $S$.

\begin{theorem}\label{thm:monleftlocal}
Let $$F: \C \lradjunction \D: G$$ be a lax monoidal Quillen pair between monoidal model categories $\C$ and $\D$.
Assume that $\C$ is proper, stable and cellular. Let $S$ be a stable set of
cofibrations between cofibrant objects in $\C$. If $F(s)$ is a weak
equivalence in $\D$ for all $s \in S$, then this adjoint pair factors
uniquely over the change of model structures adjunction between $\C$ and
$L_{S \square I} \C$.
That is, we have a commutative diagram of left adjoints of weak monoidal 
Quillen pairs 
\[
\xymatrix{
\C \ar[rr]^{F} \ar[dr]_{\id} && \D \\
& L_{S \square I} \C \ar[ur]_{\bar{F}}
}
\]
\end{theorem}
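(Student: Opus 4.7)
The plan is to take $\bar F$ to be the very same underlying functor as $F$, with right adjoint $\bar G = G$; the condition $\bar F \circ \id = F$ then forces this choice, so uniqueness is automatic and the weak monoidal structure on $\bar F$ is literally the one carried by $F$. The substance of the theorem is to show that $F$, regarded as a functor out of $L_{S \square I}\C$, is still left Quillen. For this I would invoke the universal property of left Bousfield localisation (\cite[Theorem 3.3.20]{Hir03}): a left Quillen functor $F \co \C \to \D$ descends to a left Quillen functor $L_T \C \to \D$ provided each element of $T$ is sent to a weak equivalence in $\D$. Specialising to $T = S \square I$, the entire problem reduces to verifying that $F(s \square i)$ is a weak equivalence in $\D$ for every $s \co A \to B$ in $S$ and every $i \co C \to D$ in $I$.

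For this key step, note first that $F(s)$ is a cofibration (since $F$ is left Quillen and $s$ is a cofibration) and, by hypothesis, also a weak equivalence, hence an acyclic cofibration in $\D$; similarly $F(i)$ is a cofibration in $\D$. The pushout-product axiom in $\D$ then gives that $F(s) \square F(i)$ is an acyclic cofibration. I now use the weak monoidality of $F$: the lax structure maps $F(X) \otimes F(Y) \to F(X \otimes Y)$ are weak equivalences when $X$ and $Y$ are cofibrant, so they are weak equivalences at each of the four corners $(A,C)$, $(A,D)$, $(B,C)$, $(B,D)$. Assembling these into the defining pushout squares of the two pushout-products produces a natural comparison map $F(s) \square F(i) \to F(s \square i)$ in which the codomain map $F(B) \otimes F(D) \to F(B \otimes D)$ is a weak equivalence directly, and the domain map is a weak equivalence of pushouts along cofibrations of cofibrant objects (using left properness of $\D$ and the gluing lemma). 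Therefore $F(s \square i)$ is weakly equivalent to the acyclic cofibration $F(s) \square F(i)$, and hence is itself a weak equivalence, as required.

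The main obstacle is handling this comparison under merely weak, as opposed to strong, monoidality of $F$; this is essentially the Schwede--Shipley observation that a weak monoidal Quillen pair commutes with pushout-products of cofibrations between cofibrant objects up to natural weak equivalence. With that ingredient in place, the factorisation $F = \bar F \circ \id$ is complete; $\bar F$ is weak monoidal because it has the same underlying functor and the same lax structure maps as $F$, and uniqueness was already noted at the outset.
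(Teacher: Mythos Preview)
Your approach is essentially the same as the paper's: reduce to the universal property of left Bousfield localisation, then show $F$ takes each $s \square i$ to a weak equivalence by comparing $F(s \square i)$ with the acyclic cofibration $F(s) \square F(i)$ via the weak monoidal structure. The paper states this comparison more tersely, simply asserting that $F(s \square j)$ is weakly equivalent to $Fs \square Fj$, whereas you spell out the gluing argument.

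There is, however, a technical oversight. You apply the weak monoidal comparison maps $F(X)\otimes F(Y)\to F(X\otimes Y)$ at the four corners $(A,C)$, $(A,D)$, $(B,C)$, $(B,D)$ and claim they are weak equivalences because $X$ and $Y$ are cofibrant. But the domains and codomains of the generating cofibrations $i\co C\to D$ in $I$ are \emph{not} assumed to be cofibrant, so you cannot invoke the weak monoidal property directly. The paper deals with this by first replacing $i$ by a weakly equivalent cofibration $j$ between cofibrant objects; since the universal property of localisation only requires the total left derived functor to invert $s\square i$, it suffices to check $F(s\square j)$ is a weak equivalence. Relatedly, your appeal to left properness of $\D$ is not licensed by the hypotheses (only $\C$ is assumed proper); once you pass to $j$ with cofibrant source and target, all objects in the relevant pushout diagrams are cofibrant and the gluing lemma holds without any properness assumption on $\D$. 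With that replacement step inserted, your argument goes through and coincides with the paper's.
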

\begin{proof}
We must show that the image under $F$ of every element $S \square I$ is an isomorphism in 
$\Ho(\D)$. Take some $i \in I$ and replace it by a weakly equivalent map $j$
that is a cofibration between cofibrant objects. 
We must show that for any $s \in S$,
$F(s \square j)$ is a weak equivalence in $\D$. 
We have a weak monoidal Quillen pair and the domain and 
codomain of $s \square j$ are cofibrant. Thus we see that 
$F(s \square j)$ is weakly equivalent to 
$Fs \square Fj$. Since $\D$ is monoidal, 
this is an acyclic cofibration of $\D$. 

Hence we have the desired factorisation of Quillen functors via the universal 
property of left Bousfield localisations \cite[Definition 3.1.1]{Hir03}. 
Furthermore, the thus obtained $\bar{F}$ and its right adjoint $\bar{G}$ form a lax monoidal 
Quillen pair between the monoidal model categories
$L_{S \square I} \C$ and $\D$. 
\end{proof}

If we restrict ourselves to spectra, then we can use the above to obtain a 
very concise description of the generating sets of a monoidal
stable localisation. For the result below we can use EKMM $\mathbb{S}$-modules, 
symmetric spectra, orthogonal spectra or their equivariant versions for a compact Lie group.

\begin{proposition}
Let $\Sp$ be a monoidal model category of spectra from the list above.
Let $S$ be some set of cofibrations between cofibrant objects in $\C$. Then $L_{S \square I} \Sp$ exists, 
is cellular, proper, stable and monoidal. It has generating sets given by $I$ and 
$J \cup (S \square I)$. 
\end{proposition}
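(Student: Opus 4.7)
The plan is to apply the machinery already developed in the paper to the set $S \square I$, and then refine the resulting description of the generating acyclic cofibrations. First I would verify that $S \square I$ is a set of cofibrations between cofibrant objects in $\Sp$: the pushout-product axiom gives the cofibration property, and for each of the listed models of spectra the generating cofibrations in $I$ have cofibrant source and target, so the same holds for their pushout-products with maps in $S$. A careful reading of the proof of Lemma \ref{lem:squaremonoidal} reveals that the stability hypothesis on $S$ is never actually invoked in that argument, so the conclusion still applies: $S \square I$ is a stable, monoidal set of maps in $\Sp$.

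Theorem \ref{thm:stableleftlocal} applied to $S \square I$ then produces $L_{S \square I}\Sp$ as a cellular, proper and stable model category with generating cofibrations $I$ and generating acyclic cofibrations $J \cup \Lambda(S \square I)$, and Lemma \ref{lem:square} upgrades this to a monoidal model structure.

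The remaining substantive step is to refine the generating acyclic cofibrations from $J \cup \Lambda(S \square I)$ down to $J \cup (S \square I)$, i.e.\ to show that a map $f \co X \to Y$ is a fibration of $L_{S \square I}\Sp$ whenever it has the right lifting property with respect to $J \cup (S \square I)$; the converse is immediate, since both $J$ and $S \square I$ consist of acyclic cofibrations of $L_{S \square I}\Sp$. Such an $f$ is automatically a fibration in $\Sp$, so by Lemma \ref{lem:fibrant} it suffices to show that the fibre $F$ of $f$ is $(S \square I)$-fibrant. A standard pullback argument transfers the lifting property to $F \to \ast$ and shows that $F$ is fibrant in $\Sp$, so the task reduces to proving that $F \to \ast$ has the right lifting property with respect to $\Lambda(S \square I)$. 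By associativity of the pushout-product, a typical element of $\Lambda(S \square I)$ has the form $s \square (i \square i_n)$; here $i \square i_n$ is a cofibration of $\Sp$ by the pushout-product axiom, hence a retract of a transfinite composition of pushouts of maps in $I$. Since $s \square (-)$ preserves retracts and colimits, $s \square (i \square i_n)$ is a retract of a transfinite composition of pushouts of maps in $S \square I$, and closure of right lifting properties under these operations transmits RLP with respect to $S \square I$ to RLP with respect to all of $\Lambda(S \square I)$.

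The main obstacle is this last cell-complex argument, which is the only place where the cofibrantly generated monoidal structure of $\Sp$ does genuine work; all the other steps amount to packaging together results already established in the paper.
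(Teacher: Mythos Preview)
Your argument is correct and follows essentially the same route as the paper: both reduce the refinement of generating acyclic cofibrations to the inclusion $\Lambda(S \square I) \subseteq (S \square I)\text{-cof}$, obtained by rewriting an element of $\Lambda(S \square I)$ as $s \square (i \square i_n)$ and using that $i \square i_n$ lies in $I\text{-cof}$. Your detour through Lemma~\ref{lem:fibrant} and the fibre is unnecessary, however --- once that inclusion is established, any map $f$ with the right lifting property against $J \cup (S \square I)$ already has it against $J \cup \Lambda(S \square I)$ directly, so $f$ is an $(S \square I)$-fibration without passing to its fibre.
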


\begin{proof}
The model category of spectra $\Sp$ comes equipped with a collection of 
evaluation functors 
\[
U_V: \C \longrightarrow \SSet_*
\]
for each $V$ of the indexing category (such as the non-negative integers
or finite dimensional real inner product spaces). Let $F_V$ be the left adjoint to
$U_V$. 

We see that the set of generating cofibrations $I$ of $\Sp$ can be chosen to consist of maps of the form 
$F_V l$, where $l$ is some generating cofibration for 
simplicial sets. It follows immediately that 
$S \square I$ is stable. Hence, by Lemma \ref{lem:squaremonoidal} we know that $L_{S \square I}\Sp$ is monoidal. By the results of Section \ref{sec:stableleftloc} we know that it is also stable, proper and cellular and cofibrantly generated by
 $I$ and $J \cup \Lambda (S \square I)$.
 
We now need to show that a map $f$ has the right lifting property with respect to 
$\Lambda (S \square I)$ if and only if it has the right lifting property with respect to 
$S \square I$. 

Because $\Sp$ is a simplicial model category, we can assume that an element in $\Lambda (S \square I)$ 
is of the form form $(s \square F_V l) \square k$, where $l$ and $k$ are generating cofibrations for 
simplicial sets and $s \in S$.
But this is isomorphic to $S \square F_V(l \square k)$. 
It follows that the sets $\Lambda(S\square I)$ and $S \square (I \square I)$ agree. 

We have $I \subseteq I \square I$ because $\iota \square F_V l =  F_V l$ where 
\[
\iota = (F_0 \partial \Delta[0]_+ \longrightarrow F_0 \Delta[0]_+).
\]
Thus,
\[
S \square I \subseteq S \square (I \square I) = \Lambda( S \square I)
\]
and hence if we define $A\mbox{--cof}$ to be the class of maps with the left lifting property
with respect to all maps with the right lifting property with respect to $A$, 
then we see that 
\[
(S\square I)\mbox{--cof} \subseteq \Lambda (S \square I)\mbox{--cof}.
\]

For the other inclusion, we know that model category $\Sp$ is monoidal, so
$I \square I \subseteq I\mbox{--cof}$. Thus
\[
\Lambda( S \square I) = S \square (I \square I) 
\subseteq S \square(I\mbox{--cof}) \subseteq (S \square I)\mbox{--cof}.
\]
\end{proof}

Recall from Section \ref{sec:monoidalleft} that in the case of a smashing localisation we have $L_E \Sp = L_\Gamma \Sp$ for
\[
\Gamma = \{ \Sigma^n \lambda: \Sigma^n \sphspec \longrightarrow \Sigma^n L_E \sphspec \,\, |\,\,n \in \mathbb{Z}\}.
\]
Together with Corollary \ref{cor:smashingcofibs} we achieve the following.

\begin{corollary}\label{cor:spectragenerating}
Let $\Sp$ be a monoidal model category of spectra with generating cofibrations $I$ and acyclic cofibrations $J$. Let $L_E$ be a smashing Bousfield localisation. Then $L_E\Sp$ is proper, cellular, stable and monoidal with generating cofibrations $I$ and generating acyclic cofibrations $J \cup (\Gamma \square I)$. \qed
\end{corollary}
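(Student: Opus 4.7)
My plan is to identify $L_E \Sp$ with $L_{\Gamma \square I} \Sp$ and then read the claimed properties off the preceding Proposition. By Lemma \ref{lem:smashing}, $L_E \Sp = L_\Gamma \Sp$; since left Bousfield localisation depends on $\Gamma$ only up to weak equivalence, I may assume every element of $\Gamma$ is a cofibration between cofibrant objects. Applying the preceding Proposition with $S = \Gamma$ then gives that $L_{\Gamma \square I} \Sp$ is cellular, proper, stable, monoidal, and cofibrantly generated by $I$ and $J \cup (\Gamma \square I)$. Both $L_\Gamma \Sp$ and $L_{\Gamma \square I} \Sp$ have the cofibrations of $\Sp$, so it suffices to show that they share the same weak equivalences.

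The forward inclusion is formal. Lemma \ref{lem:squaremonoidal} asserts that the elements of $\Gamma$ are themselves $\Gamma \square I$-equivalences, so the identity functor $\Sp \to L_{\Gamma \square I}\Sp$ sends $\Gamma$ into weak equivalences and thus factors as a left Quillen functor through $L_\Gamma \Sp$ by the universal property of left Bousfield localisation. This factorisation is the identity on underlying categories, which forces every $\Gamma$-equivalence to be a $\Gamma \square I$-equivalence.

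For the reverse inclusion I exploit the smashing hypothesis. Since $\Sp$ is stable, the cofibre of $s \square i$ for $s \in \Gamma$ and $i \in I$ is naturally $Cs \wedge^L Ci$. As $s$ is an $E$-equivalence, $Cs$ is $E$-acyclic; and for a smashing $E$, an $E$-acyclic spectrum smashed with any spectrum remains $E$-acyclic, because $L_E(Cs \wedge^L Ci) \simeq Cs \wedge^L Ci \wedge^L L_E \sphspec \simeq Ci \wedge^L (Cs \wedge^L L_E \sphspec) \simeq \ast$. Hence $s \square i$ has $E$-acyclic cofibre and is an $E$-equivalence, so $\Gamma \square I$ is contained in the class of $E$-equivalences. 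The universal property of $L_{\Gamma \square I}\Sp$ then yields a left Quillen factorisation $L_{\Gamma \square I}\Sp \to L_E\Sp = L_\Gamma\Sp$, which is the identity on underlying categories, giving the reverse inclusion. The main subtlety is precisely this use of smashingness to control pushout products; everything else is universal-property bookkeeping.
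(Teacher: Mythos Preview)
Your proof is correct and follows essentially the paper's approach: use Lemma~\ref{lem:smashing} to obtain $L_E \Sp = L_\Gamma \Sp$, verify that $\Gamma$ is monoidal (equivalently that $L_\Gamma \Sp = L_{\Gamma \square I}\Sp$), and then read off the conclusion from the preceding Proposition. One minor remark: in the reverse inclusion you invoke the smashing hypothesis to show that $E$-acyclics are closed under smash products, but this holds for \emph{any} homology theory, since $E \wedge^L (Cs \wedge^L Ci) \simeq (E \wedge^L Cs) \wedge^L Ci \simeq \ast$; the smashing hypothesis is genuinely needed only for the identification $L_E \Sp = L_\Gamma \Sp$.
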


\section{Monoidal Right Localisations}\label{sec:monoidalright}

Let $\C$ be a cellular and right proper model category and let
$K$ be a set of objects in $\C$.
Then we can ask the following: if $\C$ is monoidal, when is $R_K \C$ also
monoidal? We can use our preceding work on stability and generating cofibrations to give a compact and useful answer. We will then apply this to some examples.

We start with an observation. Recall that an object in $\C$ is $K$-cofibrant if and only if it is $K$-colocal and cofibrant in $\C$.
The elements of $K$ are $K$-cofibrant. Thus, if $R_K \C$ is monoidal, then any element of form 
$k \otimes k'$ for $k, k' \in K$ will also be $K$-cofibrant. We show that this
necessary condition is almost sufficient for monoidality of $R_K\C$.

\begin{definition}\label{def:monoidalset}
Let $K$ be a set of cofibrant objects in a right proper, cellular, monoidal model category $\C$. 
We say that $K$ is \textbf{monoidal} if the following two conditions hold.
\begin{itemize}
\item Any object of the form $k \otimes k'$,
for $k, k' \in K$, is $K$-colocal. 
\item For $Q_K \mathbb{S}$ a $K$-cofibrant replacement of the
unit $\mathbb{S}$ of $\C$ and any $k \in K$, the map 
$Q_K \mathbb{S} \otimes k \to k$ is a $K$-coequivalence. 
\end{itemize}
\end{definition}

Note that if the first condition holds, then the
domain and codomain of $Q_K \mathbb{S} \otimes k \to k$ 
are both $K$-cofibrant. Hence, this map is a $K$-coequivalence if and only
if it is a weak equivalence of $\C$. 
Obviously, if the monoidal unit is an element of $K$, then the second condition
holds automatically. 

Recall that a model category satisfies the \textbf{monoid axiom} if all 
transfinite compositions of pushouts of maps of the from 
$j \otimes Z$, for $j$ an acyclic cofibration and 
$Z$ any object of $\C$, are weak equivalences. 
This is a very useful tool for considering the category of modules over a monoid $R$ in $\C$: if $\C$ is cofibrantly generated, monoidal and satisfies the monoid axiom  (and some smallness assumptions hold), then the category of $R$-modules in $\C$ is also a cofibrantly generated model category by \cite[Theorem 4.1]{SchShi00}.

\begin{theorem}
Let $\C$ be a stable, proper, 
cellular and monoidal model category. Let $K$ 
be a stable collection of cofibrant objects. 
Then $R_K \C$ is monoidal if and only if $K$ is monoidal.

Further, if $K$ is monoidal and $\C$ also satisfies the monoid axiom, 
then so does $R_K \C$.
\end{theorem}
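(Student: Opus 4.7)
The plan is to verify the pushout--product axiom, the unit axiom, and (separately) the monoid axiom for $R_K \C$ on the explicit generating sets from Theorem \ref{thm:rightgencofibs}, namely cofibrations generated by $J \cup \Lambda K$ and acyclic cofibrations by $J$. A structural fact used throughout is that the acyclic cofibrations of $R_K \C$ coincide with those of $\C$, so any acyclic cofibration of $\C$ is automatically one in $R_K \C$. The ``only if'' direction is immediate: if $R_K \C$ is monoidal, then for any $k, k' \in K$, which are cofibrant in $R_K \C$, the pushout--product axiom forces $k \otimes k'$ to be $K$-cofibrant and in particular $K$-colocal; the unit axiom of $R_K \C$ applied to the cofibrant object $k$ gives that $Q_K \mathbb{S} \otimes k \to k$ is a weak equivalence of $R_K \C$, that is, a $K$-coequivalence.

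For the ``if'' direction, the pushout--product axiom reduces to three cases on generators. Pushout--products involving at least one element of $J$ are acyclic cofibrations of $\C$ by monoidality of $\C$ (using that elements of $\Lambda K$ are $\C$-cofibrations), and hence of $R_K \C$. For two elements $k \otimes i_n$ and $k' \otimes i_m$ of $\Lambda K$, the compatibility of framings with the monoidal product \cite[Theorem 5.6.5]{Hov99}, combined with associativity and commutativity, identifies the pushout--product up to weak equivalence in $\C$ with $(k \otimes k') \otimes (i_n \square i_m)$, where the latter $\square$ is the pushout--product in $\SSet_*$. Since $(k \otimes k') \otimes (-)$ preserves colimits and $i_n \square i_m$ is a cell complex on boundary inclusions $\partial \Delta[p]_+ \to \Delta[p]_+$, it suffices to verify that each horn $(k \otimes k') \otimes \partial \Delta[p]_+ \to (k \otimes k') \otimes \Delta[p]_+$ is a cofibration of $R_K \C$. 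This holds because $k \otimes k'$ is $K$-cofibrant by the first monoidality hypothesis on $K$, so the same argument used in \cite[Proposition 5.2.5]{Hir03} to show that $\Lambda K$ consists of $R_K \C$-cofibrations applies.

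The main obstacle is the unit axiom. Once the pushout--product axiom is in hand, $Q_K \mathbb{S} \otimes (-)$ becomes a left Quillen endofunctor of $R_K \C$ and hence derives to an exact, coproduct-preserving endofunctor of the triangulated category $\Ho(R_K \C)$ provided by Proposition \ref{prop:rightstable}. The class of $X \in \Ho(R_K \C)$ for which the natural transformation $Q_K \mathbb{S} \otimes^L X \to X$ is an isomorphism is therefore closed under retracts, coproducts, suspensions, and triangles, and by the second monoidality hypothesis on $K$ it contains $K$. Every $K$-cofibrant object is a relative $(J \cup \Lambda K)$-cell complex whose $\Lambda K$-cell attachments produce cofibre sequences with cofibre weakly equivalent to a suspension of an element of $K$, so this class exhausts $\Ho(R_K \C)$ and the unit axiom holds. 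Finally, the monoid axiom for $R_K \C$ follows immediately from the monoid axiom for $\C$, since the acyclic cofibrations of $R_K \C$ coincide with those of $\C$ and the weak equivalences of $\C$ are $K$-coequivalences.
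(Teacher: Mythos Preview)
Your approach matches the paper's: verify the pushout-product axiom on the generating sets $J \cup \Lambda K$ and $J$ from Theorem~\ref{thm:rightgencofibs}, and note that the monoid axiom is immediate since the acyclic cofibrations are unchanged. There is, however, a gap in your treatment of $\Lambda K \square \Lambda K$. You identify $(k \otimes i_n) \square (k' \otimes i_m)$ with $(k \otimes k') \otimes (i_n \square i_m)$ only \emph{up to weak equivalence in $\C$} via \cite[Theorem 5.6.5]{Hov99}, and then argue that the latter map is a $K$-cofibration via its cell structure. But being a $K$-cofibration is not a homotopy-invariant property of a map, so knowing that a weakly equivalent map is a $K$-cofibration does not show the original pushout product is one. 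The paper's argument uses the weak-equivalence identification only to conclude that the \emph{domain and codomain} of the original pushout product are $K$-colocal (this \emph{is} homotopy invariant); since that pushout product is already a cofibration in $\C$ by monoidality of $\C$, \cite[Proposition 3.3.16]{Hir03} then gives that it is a $K$-cofibration. This one-line fix replaces your cell-complex reduction.

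On the unit axiom you are in fact more careful than the paper, which simply records that the condition ``holds by assumption''. Definition~\ref{def:monoidalset} only asserts that $Q_K \mathbb{S} \otimes k \to k$ is a $K$-coequivalence for $k \in K$, not for an arbitrary $K$-cofibrant object; your localising-subcategory argument correctly extends this to all of $\Ho(R_K \C)$, using that $K$ generates this homotopy category (cf.\ the proof of Theorem~\ref{thm:morita}).
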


\begin{proof}
If $R_K \C$ is monoidal, then the pushout product axiom implies that $K$ is monoidal. 
For the converse, assume that $K$ is monoidal.  To show that $R_K\C$ is monoidal, we must verify the two conditions of \cite[Definition 4.2.6]{Hov99}. The second of these, namely that 
$$Q_K \mathbb{S} \otimes k \to k$$ 
is a $K$-coequivalence, holds by assumption.

\medskip

Remember from Theorem \ref{thm:rightgencofibs} that $R_K\C$ has generating cofibrations $\Lambda K \cup J$ and acyclic cofibrations $J$. 
Hence, we must check that $(\Lambda K \cup J) \square (\Lambda K \cup J)$
consists of $K$-cofibrations. 
This amounts to proving that the following three collections $\Lambda K \square \Lambda K$, 
$\Lambda K \square J$ and $J \square J$ consist of $K$-cofibrations.
For the first, consider \[
i = (\partial \Delta[n]_+ \otimes k 
\to \Delta[n]_+ \otimes k) \square 
(\partial \Delta[m]_+ \otimes k' 
\to \Delta[m]_+ \otimes k') \in \Lambda K \square \Lambda K
\]
which is a cofibration in $\C$ since $\C$ was assumed to be monoidal.
We can rewrite $i$, up to weak equivalence, as 
the following map which is a cofibration of $\C$
between $K$-colocal objects.
\[
\left( (\partial \Delta[n]_+ \to \Delta[n]_+) \square
(\partial \Delta[m]_+ \to \Delta[m]_+) \right) \otimes 
(k \otimes k')
\]
Thus the domain and codomain
of $i$ are $K$-colocal, so by 
\cite[Proposition 3.3.16]{Hir03}
$i$ is also $K$-cofibration. 

Let us now look at the second collection, $\Lambda K \square J$.
A map in this set is contained in the class of maps
$I \square J$--cof, which consists of acyclic cofibrations of $\C$.
Any such map is a $K$-cofibration.
The same argument holds for the third collection, $J \square J$. Thus, the pushout-product of two $K$-cofibrations is again a $K$-cofibration which is acyclic if either of the two maps is. 

The monoid axiom holds in $R_K \C$ if it holds in $\C$, 
since the set of generating
acyclic cofibrations has not changed. 
\end{proof}

We can apply this to Dwyer-Greenlees' example of right Bousfield localisation, where $\C=\ch(R)$ and $K=\{A\}$ a perfect $R$-module, see Section \ref{sec:examples}.

\begin{corollary} The model category $R_{\{A\}}(\ch(R))$ of $A$-torsion $R$-modules is a monoidal model category.
\end{corollary}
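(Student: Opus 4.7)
The plan is to apply the preceding theorem to the stable model category $\C = \ch(R)$ and the one-element collection $K = \{A\}$, so the task reduces to checking the hypotheses. The ambient model category $\ch(R)$, with its projective model structure and derived tensor product, is known to be stable, proper, cellular and monoidal. Moreover $A$ is cofibrant (a perfect $R$-module is finite length with finitely generated projective terms), and the fact that $\{A\}$ is a stable collection was recorded in the example following Proposition~\ref{prop:rightstable}; it can also be read off from the identification of $\{A\}$-colocal objects with $A$-cellular objects in Example~\ref{ex:cellular}, since that class is visibly closed under both $\Sigma$ and $\Omega$.

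The core of the argument is to verify the two conditions of Definition~\ref{def:monoidalset}. For the first, I need $A \otimes^L A$ to be $\{A\}$-colocal. Since $A$ is perfect, it lies in the thick subcategory of $\Ho(\ch(R))$ generated by $R$; equivalently, $A$ is obtained from $R$ by finitely many shifts, cofibre sequences and retracts. The functor $(-)\otimes^L A$ preserves all of these operations and sends $R$ to $A$, so $A \otimes^L A$ lies in the thick subcategory generated by $A$. By the description of colocal objects as $A$-cellular modules in Example~\ref{ex:cellular}, $A \otimes^L A$ is therefore $\{A\}$-colocal.

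For the second condition, I must check that $Q_{\{A\}} R \otimes^L_R A \longrightarrow R \otimes^L_R A = A$ is an $\{A\}$-coequivalence. Invoking the cellularisation formula of Example~\ref{ex:cellular}, we have
\[
Q_{\{A\}} R \otimes^L_R A \;=\; \cell_A(R) \otimes^L_R A \;\simeq\; \cell_A(A),
\]
and since $A$ is trivially $A$-cellular, $\cell_A(A) \simeq A$. The comparison map is therefore already a weak equivalence in $\ch(R)$, and in particular a $\{A\}$-coequivalence.

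Having verified both conditions of Definition~\ref{def:monoidalset}, the preceding theorem applies and shows that $R_{\{A\}}(\ch(R))$ is a monoidal model category. I expect the only nontrivial step to be the colocality of $A \otimes^L A$, which ultimately rests on the finite cellularity of perfect modules; the other condition follows essentially by definition from the Dwyer--Greenlees formula for cellularisation.
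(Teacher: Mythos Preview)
Your proof is correct and follows the same overall strategy as the paper: check the two conditions of Definition~\ref{def:monoidalset} and apply the preceding theorem. The unit condition is handled identically, via the cellularisation formula of Example~\ref{ex:cellular}.

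The one substantive difference is how you establish that $A\otimes^L A$ is $\{A\}$-colocal. You argue structurally: since $A$ is perfect it lies in the thick subcategory generated by $R$, hence $A\otimes^L A$ lies in the thick subcategory generated by $A$, which is contained in the class of $A$-cellular (i.e.\ colocal) objects. The paper instead uses the closed monoidal structure directly: it observes that $R\Hom_R(A\otimes A,N)\cong R\Hom_R(A,R\Hom_R(A,N))$, so if $R\Hom_R(A,N)$ is acyclic then so is $R\Hom_R(A\otimes A,N)$, and in the stable setting this is exactly what is needed for colocality. Your argument leans on the finite cellularity of perfect complexes and the identification of colocal with cellular, while the paper's adjunction argument is more direct and would generalise immediately to any $K$-colocal object in place of the second factor. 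Both are short and valid; the paper's route is perhaps the more transparent explanation of \emph{why} the monoidal structure descends.
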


\begin{proof} We consider $\ch(R)$ with the projective model structure. Since $A$ is a perfect chain complex of $R$-modules, it is of finite length and is degreewise projective. Hence $A$ is cofibrant in $\ch(R)$. We are now going to check that $K=\{A\}$ satisfies the two conditions of Definition \ref{def:monoidalset}. 

We remember from Example \ref{ex:cellular} that in this case the cofibrant replacement is the same as cellular approximation and that cellular approximation is given by the weak equivalence
\[
\cell_A(R) \otimes^L_{R} M \longrightarrow \cell_A(M).
\]
For the unit condition we must prove that 
\[
\cell_A(R) \otimes^L_R \cell_A(M) \longrightarrow \cell_A(M)
\] 
is an $\{A\}$-coequivalence for any $M$. 
But this map is simply cellular approximation of a 
cellular object, hence it is a weak equivalence. 

We now have to check that $A \otimes A$ is $\{A\}$-colocal. For this we have to show that
\[
\Map_{\ch(R)}(A \otimes A,N) \simeq * \,\,\,\,\mbox{for any $N$ with}\,\,\,\,\Map_{\ch(R)}(A,N) \simeq *.
\]
But in this case, $\Map_{\ch(R)}(X,Y) \simeq *$ is equivalent to $R\Hom_R(X,Y) = 0$ as
\[
\pi_k(\Map_{\ch(R)}(X,Y)) \cong [S^0, \Map_{\ch(R)}(\Sigma^{-k}X,Y)] \cong R\Hom_R^{-k}(X,Y).
\]
We also have by adjunction
\[
R\Hom_R(A \otimes A, N) \cong R\Hom_R(A, R\Hom_R(A,N)),
\]
so our claim follows.
\end{proof}

\bigskip
Just as we may make any set of objects $K$ stable, we may also make any stable set
into a monoidal stable set. Let $\bar{K}$ denote the collection of objects
$k_1 \otimes k_2 \dots \otimes k_n$ for all $n \geqslant 0$, 
with the zero-fold product being the cofibrant replacement of the unit. This set is clearly monoidal so 
$R_{\bar{K}} \C$ is a monoidal model category. 
However, $R_{\bar{K}} \C$ has \emph{fewer}
weak equivalences, so in general a $K$-coequivalence
is not a $\bar{K}$-coequivalence. So this notion of replacing $K$ by 
$\bar{K}$ is perhaps less useful than 
the version for left localisations. 

Dually to Theorem \ref{thm:monleftlocal} we can show that $R_{\bar{K}}\C$ is the best we can achieve. The following result essentially says that $R_{\bar{K}}$ is the ``closest'' right localisation to $R_K\C$ for arbitrary stable $K$ that is also monoidal.

\begin{proposition}
Let $\C$ be a right proper, stable, cellular 
monoidal model category. Then the identity adjunction gives Quillen pairs as below
where the right hand adjunction is a monoidal Quillen pair.
\[
R_K \C \lradjunction 
R_{\bar{K}} \C \lradjunction
\C
\]
\end{proposition}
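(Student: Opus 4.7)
The plan is to verify the Quillen adjunction conditions for each identity functor, and then verify the unit axiom for the monoidal Quillen pair on the right. Throughout I will exploit two standard observations about right Bousfield localisation: first, since $K \subseteq \bar{K}$, every $\bar{K}$-coequivalence is a $K$-coequivalence, so the class of weak equivalences shrinks as we pass from $R_K\C$ to $R_{\bar{K}}\C$ to $\C$; second, in any right Bousfield localisation $R_L\C$ the class of acyclic cofibrations coincides with that of $\C$, because in both model structures the acyclic cofibrations are exactly the maps with the left lifting property against the common class of fibrations.

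For the adjunction $R_K\C \lradjunction R_{\bar{K}}\C$, I would observe that the acyclic fibrations of $R_{\bar{K}}\C$ (fibrations of $\C$ that are $\bar{K}$-coequivalences) form a sub-class of the acyclic fibrations of $R_K\C$, so by characterising cofibrations via left lifting, every $K$-cofibration is a $\bar{K}$-cofibration. The acyclic cofibrations agree with those of $\C$ in both categories by the second observation above, so the identity is a left Quillen functor in the stated direction. For the adjunction $R_{\bar{K}}\C \lradjunction \C$, the same two arguments, applied now with weak equivalences of $\C$ in place of $\bar{K}$-coequivalences, yield that every $\bar{K}$-cofibration is a cofibration of $\C$ and that the acyclic cofibrations in the two categories already coincide.

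For the monoidal Quillen pair claim, the identity functor is strong monoidal, and $R_{\bar{K}}\C$ is itself a monoidal model category by the preceding theorem (since $\bar{K}$ is monoidal by construction), so the only non-trivial check is the unit axiom of \cite[Definition 4.2.16]{Hov99}: the $\bar{K}$-cofibrant replacement of the monoidal unit $\mathbb{S}$, viewed in $\C$, must be a weak equivalence. Here the construction of $\bar{K}$ plays its role: by definition, the zero-fold product in $\bar{K}$ is a cofibrant replacement $Q\mathbb{S}$ of $\mathbb{S}$ in $\C$. Hence $Q\mathbb{S}$ is both cofibrant in $\C$ and $\bar{K}$-colocal, so it is $\bar{K}$-cofibrant, and the map $Q\mathbb{S} \to \mathbb{S}$ is a weak equivalence of $\C$ and hence a $\bar{K}$-coequivalence. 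It therefore serves as a $\bar{K}$-cofibrant replacement of $\mathbb{S}$ that is already a weak equivalence in $\C$, as required.

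No substantial obstacle is anticipated; the content is essentially bookkeeping about how the three classes of weak equivalences and the three classes of cofibrations sit inside one another. The one point I would double-check carefully is the direction of the containments: enlarging the colocal test set from $K$ to $\bar{K}$ shrinks the class of weak equivalences, shrinks the class of acyclic fibrations, and therefore enlarges the class of cofibrations, which is exactly what is needed for the identity to be a left Quillen functor in the stated directions.
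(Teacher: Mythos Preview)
Your argument is correct and follows essentially the same route as the paper's proof, though you are considerably more explicit: the paper simply notes that each object of $\bar{K}$ is cofibrant in $\C$ (since $\C$ is monoidal and $K$ consists of cofibrant objects) and then asserts that the factorisation $R_K\C \lradjunction R_{\bar{K}}\C \lradjunction \C$ follows, with the right-hand pair monoidal. Your verification of the lifting properties via the containment $K \subseteq \bar{K}$ and your explicit check of the unit axiom using $Q\mathbb{S} \in \bar{K}$ fill in details the paper leaves to the reader.
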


\begin{proof}
Every object of $K$ is cofibrant in $\C$. Since $\C$ is monoidal, every object of the form $k_1 \otimes k_2 \otimes \cdots \otimes k_n$ for $k_i \in K$ and $n \ge 0$ is also cofibrant in $\C$. It follows that $R_K \C \lradjunction \C$ factors over $R_{\bar{K}}\C$ as required, giving a monoidal Quillen pair $R_{\bar{K}}\C \lradjunction \C$.
\end{proof}

\section{Replacing stable model categories by spectral ones}\label{sec:spectral}

Model categories are fundamentally linked to simplicial sets
via framings. 
But framings are only well behaved on the homotopy category.
For many tasks it is preferable to have a simplicial model category. 
Hence the question of when is a model category Quillen equivalent to a simplicial one?
The paper \cite{Dug01} provides an answer to this question. 

Stable model categories are fundamentally linked to 
spectra via stable framings, see \cite{Len12}. 
Stable framings are even more poorly behaved on the model category level
than framings. Hence we would like an answer to the question:  
when is a model category Quillen equivalent to a spectral one?

\begin{definition}
A \textbf{spectral model category} is a model category that is enriched, tensored and cotensored over symmetric spectra. Further, it
 satisfies the analogue of 
Quillen's SM7 with simplicial sets replaced by symmetric spectra $\Sigma \Sp$. 
In the language of \cite[Definition 4.2.18]{Hov99} it is a $\Sigma \Sp$-model category.
\end{definition}

We can now use our work on left localisations to weaken the known assumptions that a model category has to satisfy in order to be Quillen equivalent to a spectral one. Because of Proposition \ref{prop:rightproper} we can now combine results from Dugger and Schwede-Shipley to acquire the following result.

\begin{theorem}\label{thm:specreplace}
If $\C$ is a model category that is stable, proper and cellular,
then it is Quillen equivalent to a 
spectral model category that is also
stable, proper and cellular.
\end{theorem}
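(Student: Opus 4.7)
The strategy is to build the spectral replacement in two stages, each of which is a known replacement result in the literature, and then to use our Proposition \ref{prop:rightproper} to glue them together while preserving all the hypotheses.

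\smallskip

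\emph{Stage one: simplicial replacement.} First I would invoke Dugger's theorem from \cite{Dug01}, which shows that any left proper, cellular model category $\C$ is Quillen equivalent to a simplicial model category $U\C/S$, obtained as a left Bousfield localisation of the universal simplicial model structure $U\C$ on simplicial presheaves over $\C$ at an explicit set $S$ of maps. The resulting $U\C/S$ is automatically left proper, cellular and simplicial. What we need in addition is that it be stable and right proper. Here Dugger's construction gives us no immediate control, but this is precisely where our framework intervenes.

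\smallskip

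\emph{Stage two: forcing stability of the localisation.} To fix stability I would enlarge Dugger's set $S$ to $\Omega^\infty S$ in the sense analogous to the lemma preceding Example \ref{ex:cellular} (or apply $S \cup \Omega S \cup \Omega^2 S \cup \cdots$, using cofibrant desuspensions in the already-simplicial/stable model category $U\C$). Since $U\C$ is stable (as a presheaf category on a stable $\C$, with the universal model structure suitably adjusted, or alternatively by first stabilising $U\C$), and the enlarged set is stable, our Proposition \ref{prop:leftstable} makes the localisation stable, and our Proposition \ref{prop:rightproper} then guarantees that the localisation remains right proper. Left properness and cellularity are preserved by any left Bousfield localisation of a left proper, cellular model category by \cite{Hir03}. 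One must then verify that this enlargement does not destroy the Quillen equivalence to $\C$ itself; here one uses that the maps added are already $S$-equivalences up to suspension/desuspension, so their formal inversion is forced by stability and does not change the homotopy category compared to the original localisation.

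\smallskip

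\emph{Stage three: passing from simplicial to spectral.} Once one has a stable, proper, cellular, simplicial model category $\D$ Quillen equivalent to $\C$, I would apply the result of Schwede--Shipley (as used for instance in the proof of their uniqueness theorem for stable model categories) that any stable, proper, cellular, simplicial model category is Quillen equivalent to a spectral model category. Concretely one may realise this by passing to symmetric spectrum objects or by a further (monoidal) left Bousfield localisation at maps of the form $X \wedge (\mathbb{S}^1 \to \ast)$ etc.; stability of $\D$ makes this last localisation a Quillen equivalence, and by our Proposition \ref{prop:rightproper} together with Theorem \ref{thm:stableleftlocal} the resulting spectral model category is again stable, proper, and cellular.

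\smallskip

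The main obstacle is Stage two: checking that Dugger's specific localising set can be enlarged to a stable set without losing the Quillen equivalence to $\C$. Everything else is a bookkeeping application of already established replacement theorems together with our preservation results for properness, stability, and cellularity under stable left Bousfield localisation.
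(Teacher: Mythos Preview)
Your overall architecture (Dugger simplicial replacement, then Schwede--Shipley spectral replacement, with Proposition~\ref{prop:rightproper} supplying right properness at each localisation step) matches the paper's. But your Stage two is both unnecessary and the source of a real gap, and the gap comes from your choice in Stage one.

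You invoke Dugger's $U\C$, the universal model structure on simplicial presheaves over $\C$. This category is \emph{not} stable, and your parenthetical ``as a presheaf category on a stable $\C$'' is incorrect: simplicial presheaves on anything form an unstable homotopy theory. This is exactly why you are then forced into the awkward business of enlarging $S$ to $\Omega^\infty S$ and worrying whether the enlargement preserves the Quillen equivalence---a worry you rightly flag as the main obstacle, and which you do not resolve.

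The paper sidesteps all of this by using the \emph{other} construction from \cite{Dug01}: the category $s\C$ of simplicial objects in $\C$ with the Reedy model structure, then left-localised at Dugger's set $S$ to obtain $s\C_{hc}$. Because Reedy weak equivalences are levelwise and $\C$ is stable, $s\C$ is already stable, proper and cellular. Now the key move: since $s\C_{hc}$ is Quillen equivalent to the stable $\C$, it is automatically stable; hence by the converse direction of Proposition~\ref{prop:leftstable} the set $S$ is \emph{already} a stable set, with no enlargement needed. Proposition~\ref{prop:rightproper} then gives right properness for free. The same logic applies verbatim at the Schwede--Shipley stage: the stable model structure on $\Sigma\Sp(\D,S^1)$ is Quillen equivalent to the stable $\D$, hence stable, hence its localising set is stable, hence it is right proper. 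So the insight you are missing is that stability of the localising set need not be arranged by hand---it is forced a posteriori by the Quillen equivalence to something stable, via Proposition~\ref{prop:leftstable}.
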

\begin{proof}
Because $\C$ is cellular and left proper, 
\cite[Theorem 1.2]{Dug01} states that $\C$ is 
Quillen equivalent to a simplicial model category. 
Specifically, $\C$ is Quillen equivalent to a
non-standard model structure on the category of simplicial objects
in $\C$, which we write as $s \C_{hc}$. 

In more detail, one starts by equipping the category of simplicial objects
in $\C$ with the Reedy model structure. 
A Reedy weak equivalence is a map of simplicial objects $f \co A \to B$ 
such that on each level $f_n$ is a weak equivalence of $\C$. 
Every Reedy cofibration is a levelwise cofibration and 
every Reedy fibration is a levelwise fibration, see  \cite[15.3.11]{Hir03}.
It follows immediately that $s \C$ is still stable. 
Since $\C$ is cellular and proper, so is $s \C$
by \cite[Theorems 15.7.6 and 15.3.4]{Hir03}.

The model category $s \C_{hc}$ is defined as a left Bousfield
localisation of $s \C$ at a set $S$ of maps defined 
just above Theorem 5.2 in \cite{Dug01}. Since 
$s \C_{hc}$ is Quillen equivalent to $\C$, it must also be stable. 
Hence by Proposition \ref{prop:rightproper}, $s \C_{hc}$ is right proper. 
Thus we now know that $s \C_{hc}$ is a proper, cellular, stable model category. 

We now use the results of \cite{SchShi03} to replace this by a Quillen
equivalent spectral model category. 
We rename $s \C_{hc}$ as $\D$ and denote 
the category of symmetric spectra in $\D$, 
by $\Sigma \Sp (\D, S^1)$.
We can equip this category with the levelwise (or projective) model structure,  
where fibrations
and weak equivalences are defined levelwise. 
This model structure is cellular, proper and stable. 

We then left localise the model structure at a set of cofibrations
to obtain the `stable' model structure on $\Sigma \Sp (\D, S^1)$. 
By \cite[Theorem 3.8.2]{SchShi03} this model structure is spectral and 
there is a Quillen equivalence between $\D$ and $\Sigma \Sp(\D, S^1)$
equipped with the stable model structure.
Our previous results also show that this stable model structure
on $\Sigma \Sp (\D, S^1)$ is proper.
\end{proof}

Results along this line have been proven in 
\cite{Dug06}. In that paper it is shown that a 
stable, presentable model category is Quillen equivalent
to a spectral model category. 
We replace the notion of presentable (which essentially means 
Quillen equivalent to a combinatorial model category)
with the more familiar notion of cellular. While we have to add
proper to our list of assumptions, our method of replacing a model 
category by a spectral one involves no choices and requires much less
technical work to understand the resulting category and model structure.

\section{Right localisation and Morita theory}\label{sec:morita}

In \cite[Theorem 2.1]{DwyGre02}, Dwyer and Greenlees show that the category of $A$-torsion $R$-modules (with $A$ a perfect $R$-module) is equivalent to the derived category of the ring $\End_R(A)$. In this section we are going to prove a more general version of this, namely that for a set of well-behaved objects $K$, the model category $R_K\C$ is Quillen equivalent to the category of modules over the endomorphism ring spectrum with several objects $\rightmod \End(K)$.

We say that an object $X$ in a stable model category $\C$
is \textbf{homotopically compact} if for any family of object $\{ Y_a \}_{a \in A}$
the canonical map below is an isomorphism. 
\[
\bigoplus_{a \in A} [X, Y_a]^\C \to [X, \coprod_{a \in A} Y_a]^\C
\] 
Homotopically compact objects have obvious technical advantages over general ones,
so it is natural to ask what happens if one right localises at a set of 
homotopically compact objects. 
We show that, with some minor assumptions, such right localisation are well understood, 
and we identify their homotopy categories.  

Let $\C$ a stable, cellular right proper spectral model category and 
let $K$ be a stable set of homotopically compact cofibrant-fibrant objects
of $\C$. The assumption that 
$\C$ be spectral is less demanding than it appears, by Theorem \ref{thm:specreplace}. 

Define $\End(K)$ to be the category enriched over symmetric spectra
with objects set given by $K$ and morphism spectra
given by $\hom(k,k')$ defined using the enrichment of $\C$ in 
symmetric spectra. Consider the category of 
contravariant enriched functors from $\End(K)$ to symmetric spectra, 
with morphisms the enriched natural transformations. 
We call this category $\rightmod \End(K)$.
It has a model structure with weak equivalences and 
fibrations defined termwise, see
\cite[Theorem A.1.1]{SchShi03}.

There is a Quillen pair 
\[
\rightmod\End(K)  \lradjunction \C \]
whose right adjoint takes
$X \in \C$ to $\hom(-,X)$ in $\mbox{mod-}\End(K)$. 
We call this right adjoint $\hom(K, -)$ and 
we write $- \smashprod_{\End(K)} K$  for its left adjoint. 

We are almost ready to start relating $\rightmod \End(K)$ and $R_K \C$, 
but we first need a technical result.

\begin{lemma}
Let $\C$ a stable, cellular right proper spectral model category 
and let $K$ be a stable set of cofibrant objects in $\C$. 
Then $R_K \C$ is a spectral model category. 
\end{lemma}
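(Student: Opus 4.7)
My plan is to verify the spectral pushout-product axiom (the analogue of SM7 with $\Sigma \Sp$ in place of $\SSet_*$) for the new model structure on $R_K \C$. Since $R_K \C$ has the same underlying category, enrichment, tensoring and cotensoring over $\Sigma \Sp$ as $\C$, only the interaction with the new model structure is at issue. By Theorem \ref{thm:rightgencofibs}, $R_K \C$ is cofibrantly generated with generating cofibrations $J \cup \Lambda K$ and generating acyclic cofibrations $J$; in particular the classes of acyclic cofibrations of $\C$ and of $R_K \C$ coincide. Writing $I_\Sigma$ and $J_\Sigma$ for generating (acyclic) cofibrations of $\Sigma \Sp$, it is enough to check the axiom on pairs of generators.

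Three of the four combinations are essentially free: if $f \in J$ or $g \in J_\Sigma$, then $f \square g$ is already an acyclic cofibration of $\C$ by the spectral axiom for $\C$, and hence an acyclic cofibration of $R_K \C$. The only substantive case is $f \in \Lambda K$ and $g \in I_\Sigma$, where we must show $f \square g$ is a cofibration of $R_K \C$. Writing $f = X \otimes i_n$ with $X \in K$ and $i_n \co \partial\Delta[n]_+ \to \Delta[n]_+$, and $g = F_k i_m$, the compatibility of framings with the spectral tensoring, $X \otimes L \cong X \wedge F_0 L$, together with the identity $F_0 L \wedge F_k M \cong F_k(L \wedge M)$, let us rewrite
\[
f \square g \;\cong\; (X \wedge F_k S^0) \otimes (i_n \square i_m).
\]

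Set $Y := X \wedge F_k S^0$. Then $Y$ is cofibrant in $\C$ (as a smash of cofibrant objects, via spectral SM7 in $\C$) and represents $\Omega^k X$ in $\Ho(\C)$; because $K$ is stable, the class of $K$-colocal objects is closed under $\Omega$, so $Y$ is $K$-colocal. The crucial step is then that, since $Y$ is $K$-colocal, the $K$-coequivalences and the $(K \cup \{Y\})$-coequivalences agree, so $R_{K \cup \{Y\}} \C = R_K \C$ as model structures, and therefore $\Lambda \{Y\}$ is contained in the generating cofibrations of $R_K \C$ via Theorem \ref{thm:rightgencofibs} applied to $K \cup \{Y\}$. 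A general cofibration $s$ of pointed simplicial sets is a retract of a transfinite composition of pushouts of the standard inclusions $i_l$, and $Y \otimes (-)$ preserves these colimits, so $Y \otimes s$ is a cofibration of $R_K \C$. Applying this to $s = i_n \square i_m$ finishes the remaining case.

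The main obstacle is precisely this last case: it requires combining (a) the framing-versus-spectral bookkeeping needed to rewrite the spectral pushout-product as a simplicial framing tensor on a $K$-colocal object, with (b) the observation that $R_K \C$ depends only on the class of $K$-colocal objects, which lets us enlarge $K$ to include any cofibrant $K$-colocal object without altering the model structure and thereby import fresh generating cofibrations.
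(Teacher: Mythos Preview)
Your proof is correct and follows the same overall strategy as the paper: reduce to generators, note that the only nontrivial case is $\Lambda K$ against a generating cofibration of $\Sigma\Sp$, and handle that case by recognising a desuspension of an element of $K$, which is $K$-colocal by stability.

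The two arguments diverge only at the last step. The paper observes directly that the domain and codomain of $a \square i$ are $K$-colocal (since $X \otimes F_n A \simeq (\Sigma^{-n}X)\otimes A$) and then invokes \cite[Proposition~3.3.16]{Hir03}: a cofibration of $\C$ between $K$-colocal objects is automatically a $K$-cofibration. You instead rewrite $f \square g$ as $Y \otimes (i_n \square i_m)$ with $Y \simeq \Omega^k X$, and then argue that because $Y$ is $K$-colocal one may enlarge $K$ to $K\cup\{Y\}$ without changing the model structure, so $\Lambda\{Y\}$ lies among the generating cofibrations of $R_K\C$ and hence $Y\otimes s$ is a $K$-cofibration for any simplicial cofibration $s$. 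Your route is a little longer but self-contained, avoiding the citation to Hirschhorn; the paper's is shorter but leans on that external lemma. Both rest on the same idea, and indeed your enlargement trick is essentially a repackaging of the content of \cite[Proposition~3.3.16]{Hir03}.
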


\begin{proof}
Since $\C$ is spectral, all we must show is the spectral analogue of (SM7), namely
that if $a$ is a cofibration of $R_K \C$
and $i$ is a cofibration of $\Sigma \Sp$, then 
$a \square i$ is a cofibration of $R_K \C$. 
It suffices to prove this for $a \in \Lambda K$ and $i$ a generating
cofibration of $\Sigma \Sp$. We know that $a \square i$
is a cofibration of $\C$. We must show that it is in fact a $K$-cofibration. 

Consider a generating cofibration $i$. It is of the form $F_n A \to F_n B$ for $A$ and $B$ simplicial sets
and $F_n$ the left adjoint to evaluation at level $n$. 
If $X \in \C$ is $K$-colocal then $X \otimes F_n A$
is weakly equivalent to $(\Sigma^{-n} X) \otimes A$. 
Since $K$ is stable, $\Sigma^{-n} X$ is $K$-colocal and
hence so is $(\Sigma^{-n} X) \otimes A$. 
It follows that the domain and codomain of $a \square i$
are both $K$-colocal. 
By \cite[Proposition 3.3.16]{Hir03}
a cofibration between $K$-colocal
objects is a $K$-cofibration. 
Hence $a \square i$ is a $K$-cofibration, which is what we wanted to prove.
\end{proof}

We need some new terms in order to state the main result of this section.
\begin{definition}
Let $\C$ be a stable model category. A full triangulated subcategory
of $\Ho(\C)$ with shift and triangles induced from $\Ho(\C)$ is called 
\textbf{localising} if it is closed under
coproducts in $\Ho(\C)$. A set $P$ of objects of $\Ho(\C)$ 
is called a set of \textbf{generators} if the only localising
subcategory which contains the objects of $P$ is $\Ho(\C)$ itself.
\end{definition}

\begin{theorem}\label{thm:morita}
Let $\C$ a stable, cellular right proper spectral model category and 
let $K$ be a stable set of cofibrant-fibrant objects
of $\C$. Then the Quillen pair 
\[
- \smashprod_{\End(K)} K : \rightmod \End{(K)} 
\lradjunction 
\C : \hom(K, -) 
\]
factors over 
$R_K \C$. 
Hence one has a diagram of Quillen pairs as below.
\[
\xymatrix@C+1.5cm@R+0.5cm{
\rightmod \End{(K)} 
\ar@<+1ex>[r]^(0.6){- \smashprod_{\End(K)} K} 
& R_K \C 
\ar@<+0ex>[l]^(0.4){\hom(K,-)} 
\ar@<+0.5ex>[r]^{\id} 
& \C
\ar@<+0.5ex>[l]^{\id} 
}
\]

If the set $K$ consists of homotopically compact 
objects, then the left hand Quillen pair in this diagram is a Quillen equivalence. 
Furthermore, the homotopy category of $R_K \C$ is 
triangulated equivalent to the 
localising subcategory of $\Ho(\C)$ generated by $K$. 

\end{theorem}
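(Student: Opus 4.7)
I would begin by verifying that the Quillen pair factors through $R_K\C$; this amounts to showing the right adjoint $\hom(K,-)\co\C\to\rightmod\End(K)$ descends to a right Quillen functor out of $R_K\C$. Evaluated at any $k \in K$, it is just $\hom(k,-)$, which is right Quillen because $\C$ is spectral and $k$ is cofibrant; in particular it preserves fibrations levelwise. For the weak equivalence condition, a map $f \co X \to Y$ is a $K$-coequivalence precisely when $\Map_\C(k,f)$ is a weak equivalence for each $k \in K$, and in the spectral setting this is equivalent to $\hom(k,f)$ being a stable equivalence of symmetric spectra---exactly the levelwise weak equivalence condition in $\rightmod\End(K)$.

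For the Quillen equivalence under the compactness hypothesis, I would follow the Morita-theoretic strategy of Schwede and Shipley. For each $k \in K$, let $F_k = \End(K)(-,k)$ denote the free right $\End(K)$-module on $k$. Then $F_k$ is cofibrant, $F_k \smashprod_{\End(K)} K \cong k$, and $\hom(K,k) \cong F_k$ by the enriched Yoneda lemma, so the derived unit $\eta_M \co M \to \hom(K, M \smashprod_{\End(K)}^L K)$ is an isomorphism at $M = F_k$. The class of $M$ for which $\eta_M$ is an isomorphism is closed under exact triangles trivially, and closed under arbitrary coproducts because $F_k$ is homotopically compact in $\Ho(\rightmod\End(K))$ and $k$ is homotopically compact in $\Ho(R_K\C)$---the latter inheriting compactness from $\Ho(\C)$ by hypothesis, since $\Ho(R_K\C)$ sits as a full triangulated subcategory of $\Ho(\C)$ closed under coproducts. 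Since $\{F_k\}_{k\in K}$ generates $\Ho(\rightmod\End(K))$ as a localising subcategory, $\eta$ is a natural isomorphism.

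The derived counit $\varepsilon_X \co \hom(K, X) \smashprod_{\End(K)}^L K \to X$ is handled dually: it is an isomorphism at $X = k$ by Yoneda, the class on which it is an isomorphism is a localising subcategory of $\Ho(R_K\C)$ by the same compactness argument, and $\Ho(R_K\C)$ is generated by $K$ as a localising subcategory---by Theorem~\ref{thm:rightgencofibs}, every cofibrant object of $R_K\C$ arises as a transfinite composition of pushouts of maps in $\Lambda K \cup J$, hence sits in the localising subcategory generated by $K$. This completes the proof of the Quillen equivalence.

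For the final assertion, the identity Quillen pair $R_K\C \lradjunction \C$ induces a fully faithful triangulated functor $\Ho(R_K\C) \hookrightarrow \Ho(\C)$ whose essential image is the full subcategory of $K$-colocal objects; composing with the equivalence $\Ho(\rightmod \End(K)) \simeq \Ho(R_K\C)$ just established shows this image coincides with the localising subcategory of $\Ho(\C)$ generated by $K$. The main obstacle is the closure-under-coproducts step in the Morita argument: without homotopical compactness of $K$, the functor $\hom(K,-)$ need not commute with coproducts up to weak equivalence, and one could not extend the verification of the derived unit and counit from the generators $\{F_k\}$ and $K$ respectively to arbitrary objects of the two homotopy categories.
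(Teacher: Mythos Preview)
Your argument is correct and arrives at the same conclusions, but the organisation differs from the paper's in two respects worth noting.

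For the factorisation through $R_K\C$, the paper works with the \emph{left} adjoint: a generating cofibration of $\rightmod\End(K)$ has the form $\hom(-,k)\smashprod i$ with $i$ a generating cofibration of symmetric spectra, and $-\smashprod_{\End(K)}K$ sends this to $k\smashprod i$, which is a cofibration of $R_K\C$ by the immediately preceding lemma that $R_K\C$ is itself spectral. You instead verify that the right adjoint $\hom(K,-)$ preserves fibrations and acyclic fibrations. Both routes are valid; the paper's makes more direct use of the explicit generating set $J\cup\Lambda K$ from Theorem~\ref{thm:rightgencofibs} and of the spectral structure on $R_K\C$, which is the thematic point of this section. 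One small gloss in your version: the assertion that ``$\Map_\C(k,f)$ a weak equivalence'' is equivalent to ``$\hom(k,f)$ a stable equivalence'' identifies the point-set spectral hom with the derived one, which needs the targets to be fibrant. For a general acyclic fibration $f$ in $R_K\C$ one should either restrict to fibrant targets (enough by Ken Brown) or pass to the fibrant fibre $F$, observe that $\hom(k,F)$ is contractible, and conclude via the resulting fibre sequence of spectra.

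For the Quillen equivalence, the paper simply checks the hypotheses of \cite[Theorem~3.9.3]{SchShi03}---namely that $K$ is a set of homotopically compact generators for $\Ho(R_K\C)$, the generation statement coming from the cofibres of $\Lambda K\cup J$---and then invokes that theorem as a black box. You unpack that black box, verifying the derived unit and counit on the representables $F_k$ and on the objects of $K$, and then extending via a localising-subcategory argument powered by compactness. This is precisely the internal mechanism of the Schwede--Shipley result, so the two proofs coincide in substance; yours is more self-contained, the paper's more economical.
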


\begin{proof}
A generating cofibration of $\mbox{mod-} \End{K}$ 
takes form $\hom(-,k) \smashprod i$ where $i$ is a generating cofibration in symmetric spectra, $\smashprod$ is the smash product in symmetric spectra and $\hom(-,k) \in \mbox{mod-} \End{K}$. 
The functor ${- \smashprod_{\End{K}} K} $ sends this to
$k \smashprod i$, which is a cofibration of the spectral model category $R_K \C$.
Hence we have a factorisation of the Quillen functors as above. 

It is easy to check that if $k$ is compact in $\C$, 
then it is also compact in $R_K \C$. 
The set of cofibres of $\Lambda K \cup J$  (the generating cofibrations for 
$R_K \C$) is a generating set for the homotopy category of $R_K \C$. 
Since the cofibres of $J$ are contractible, we may ignore these. 
The cofibres of the sets $\Lambda K $ are simply 
suspensions of $K$ up to weak equivalence, hence it follows that 
$K$ is a generating set for the homotopy category of $R_K \C$.
We now apply 
\cite[Theorem 3.9.3]{SchShi03} to see that 
we have a Quillen equivalence and that 
the statement on homotopy categories holds. 
\end{proof}

Thus we have shown that in good circumstances a right localisation
is Quillen equivalent to the simpler notion of modules over an
endomorphism ringoid. In this setting we can identify 
$\Ho R_K \C$ as the smallest localising 
subcategory of $\C$ containing $K$.
Hence it is perfectly correct to think of $R_K \C$ as modelling the homotopy
theory of objects of $\C$ built from $K$ via coproducts, shifts and triangles. 
Thus right localisation in these circumstances simply alters which objects we think 
of as generators for the homotopy category. 
We also obtain an explicit description of $K$-colocalisation. If $X$ is fibrant in $\C$, 
then $K$-colocalisation is given by
\[
\hom(K,X) \smashprod_{\End{K}}^L K \longrightarrow X.
\]

This leads to questions for future research: if the set $K$ is not homotopically compact, 
how well does $R_K \C$ model $\rightmod \End(K)$? Similarly, 
if $\C$ is spectral but not cellular or right proper, 
and $K$ is a stable set of homotopically compact objects, 
how well does $\rightmod \End(K)$ model $R_K \C$, which may not exist?

\begin{ex}\label{ex:dgendomorphisms}
One half of \cite[Theorem 2.1]{DwyGre02} 
is the statement that the category of $A$-torsion $R$-modules
is equivalent to the derived category of modules over 
$\End_R(A)$, for $A$ a perfect complex. 
We are now able to give a model category level version of that result:
the right localisation of $\ch(R)$ at the perfect complex $A$ is 
Quillen equivalent to $\rightmod \End_R(A)$. 
\end{ex}

\bigskip
We are now going to use a duality argument to show that in some special cases, $R_K\C$ is Quillen equivalent to a left localisation of $\C$ at a set of maps $S$. In particular this applies to the case of $A$-torsion $R$-modules. For the rest of this section assume that $\C$ is a stable model category whose homotopy category $\Ho(\C)$ is monoidal with product $\wedge$ and unit $\mathbb{S}$.
Further, we ask for $\mathbb{S}$ being a homotopically compact generator. We also assume that $\Ho(\C)$ is closed in the sense that it possesses function objects $F(-,-)$. 
For example, $\C$ can be any smashing localisation of EKMM $\mathbb{S}$-modules. 

Remember that $X \in \Ho(\C)$ is said to be \textbf{strongly dualisable} if the natural map
\[
F(X,\mathbb{S}) \wedge Y \longrightarrow F(X,Y)
\]
is an isomorphism for all $Y$, see \cite[Definition 1.1.2]{HovPalStr97}. 
In our setting the class of homotopically compact objects is equal to 
the class of strongly dualisable objects by \cite[Theorem 2.1.3]{HovPalStr97}. 

Let $K$ be a set of objects in $\C$. By $DX := F(X,\mathbb{S})$ we denote the dual of an object $X$. Further, we define
\[
DK := \coprod_{k \in K} Dk
\]
\begin{definition}
We say that a morphism $f: X \longrightarrow Y$ in $\C$ is a $DK_*$-equivalence if 
\[
DK \wedge f: DK \wedge X \longrightarrow DK \wedge Y
\]
is an isomorphism in $\Ho(\C)$. 
\end{definition}

It is now easy to prove the proposition below, which we 
combine with Theorem \ref{thm:morita} to obtain the subsequent corollary.

\begin{proposition}
Let $K$ be a set of homotopically compact cofibrant objects in $\C$. Then the class of $K$-coequivalences is precisely the class of $DK_*$-equivalences. Furthermore, if $L_{DK_*}\C$ exists, then the identity functors provide a Quillen equivalence
\[
R_K\C \lradjunction L_{DK_*}\C. 
\] {\qed}
\end{proposition}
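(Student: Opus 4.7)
The plan is to prove that the two model structures share the same class of weak equivalences, after which the Quillen equivalence via the identity functors is essentially formal.

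For the first statement, I would exploit strong dualisability. Since $\mathbb{S}$ is a homotopically compact generator of $\Ho(\C)$, by \cite[Theorem 2.1.3]{HovPalStr97} every homotopically compact object of $\Ho(\C)$ is strongly dualisable. In particular, each $k \in K$ is strongly dualisable, so there is a natural isomorphism $F(k, X) \cong Dk \wedge X$ in $\Ho(\C)$. Using the stability of $\C$ and the upgrade from mapping spaces to mapping spectra (so that the weak equivalence class of $\Map_\C(k, f)$ is detected by $[k, f]^\C_n$ for all $n \in \mathbb{Z}$), the map $f \colon X \to Y$ is a $K$-coequivalence if and only if $[k, f]^\C_n$ is an isomorphism for every $k \in K$ and every $n \in \mathbb{Z}$. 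The adjunction then identifies
\[
[k, f]^\C_n \cong [\Sigma^n \mathbb{S}, Dk \wedge f]^\C = \pi_n(Dk \wedge f),
\]
so the condition becomes that $Dk \wedge f$ is a weak equivalence for every $k \in K$, equivalently that $DK \wedge f = \coprod_{k \in K} Dk \wedge f$ is a weak equivalence, equivalently that $f$ is a $DK_*$-equivalence.

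For the Quillen equivalence, both $R_K \C$ and $L_{DK_*} \C$ share the underlying category $\C$, and by the previous step they share the same class of weak equivalences. The cofibrations of $R_K \C$ form a subclass of the cofibrations of $\C$, which are exactly the cofibrations of $L_{DK_*} \C$. Hence the identity $\id \colon R_K \C \to L_{DK_*} \C$ preserves cofibrations, and by the same token it preserves acyclic cofibrations, so it is left Quillen with right adjoint the identity $L_{DK_*} \C \to R_K \C$. Since the classes of weak equivalences coincide, the derived functors are mutually inverse equivalences of homotopy categories, and the Quillen pair is a Quillen equivalence.

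The substantive content is the first step: the hypothesis of homotopic compactness, combined with the standing assumption that $\mathbb{S}$ is a compact generator, is exactly what is needed to pass from the mapping-space definition of $K$-coequivalences to the wedge-smash condition defining $DK_*$-equivalences. Everything else is formal: the identity-identity Quillen equivalence requires no further ingredient once the weak equivalences have been matched.
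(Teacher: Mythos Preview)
Your proposal is correct and follows exactly the approach the paper intends: the proposition carries a bare \qed, and the preceding paragraph sets up precisely the strong-dualisability identification $F(k,-)\cong Dk\wedge -$ (via \cite[Theorem 2.1.3]{HovPalStr97}) that you invoke, after which the Quillen-equivalence part is the formal argument you give.

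One small point is worth tightening. Your passage from ``$\Map_\C(k,f)$ is a weak equivalence'' to ``$[k,f]^\C_n$ is an isomorphism for every $n\in\mathbb{Z}$'' is not automatic from stability of $\C$ alone: the mapping \emph{space} only sees $n\geqslant 0$. What makes the negative degrees available is the standing assumption in this section that $K$ is a stable set (cf.\ the remark following the definition of stable $K$ in Section~\ref{sec:stablerightloc}), so that $\Omega^m k$ is again $K$-colocal and the graded criterion applies. Without this, a map could be a $K$-coequivalence while $Dk\wedge f$ fails to be an equivalence in negative degrees. This is implicit in the paper's context rather than in the proposition's stated hypotheses, so it is worth making explicit in your write-up.
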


\begin{corollary}
Let $\C$ be a monoidal, stable, cellular, proper, spectral model category
with unit $\sphspec$ a homotopically compact generator. 
If $K$ is a set of homotopically compact cofibrant-fibrant objects in $\C$ such that $L_{DK_*}\C$ exists. Then the model categories $R_K\C$, $L_{DK_*}\C$ and $\rightmod \End(K)$ are Quillen equivalent.
\end{corollary}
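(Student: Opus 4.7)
My plan is to deduce the corollary by composing two Quillen equivalences already established earlier in this section. The preceding proposition supplies the identity Quillen equivalence
\[
R_K\C \lradjunction L_{DK_*}\C,
\]
and Theorem \ref{thm:morita} supplies the Quillen equivalence
\[
\rightmod \End(K) \lradjunction R_K\C.
\]
Concatenating these gives the desired chain of Quillen equivalences between the three model categories, so all the work is concentrated in verifying the hypotheses of both results under the assumptions of the corollary.

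For the preceding proposition, the required hypotheses are that $K$ consists of homotopically compact cofibrant objects (given, since cofibrant--fibrant is stronger than cofibrant) and that $L_{DK_*}\C$ exists (explicitly assumed). So that half of the chain is immediate. For Theorem \ref{thm:morita}, the structure on $\C$ (stable, cellular, right proper, spectral) and the conditions that $K$ consist of homotopically compact cofibrant--fibrant objects are all directly given. The only hypothesis of the theorem not on the list is that $K$ be a \emph{stable} set, and this is the one substantive step of the proof.

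To verify stability of $K$ I would invoke the preceding proposition once more: it identifies the class of $K$-coequivalences with the class of $DK_*$-equivalences, that is, maps $f$ such that $DK \smashprod f$ is a weak equivalence. In the stable monoidal homotopy category $\Ho(\C)$ one has $\Sigma(DK \smashprod f) \cong DK \smashprod \Sigma f$, so since $\Sigma$ is an equivalence of $\Ho(\C)$, the class of $DK_*$-equivalences is visibly closed under $\Sigma$. Hence the class of $K$-coequivalences is closed under $\Sigma$, and by the dual of Lemma \ref{lem:stableconditions} recorded at the start of Section \ref{sec:stablerightloc}, this is precisely the condition for $K$ to be stable.

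The one thing that could conceivably have been an obstacle is the stability of $K$, because $K$ was not assumed stable in the statement of the corollary. However the identification of $K$-coequivalences with $DK_*$-equivalences reduces this to the trivial observation about $\Sigma$ commuting with smashing by a fixed object. With $K$ now known to be stable, both the preceding proposition and Theorem \ref{thm:morita} apply, and composing the two Quillen equivalences gives
\[
\rightmod \End(K) \lradjunction R_K\C \lradjunction L_{DK_*}\C,
\]
proving the corollary.
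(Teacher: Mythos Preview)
Your proof is correct and follows the paper's intended route: the paper simply states that the corollary is obtained by combining the preceding proposition with Theorem \ref{thm:morita}, and you have carried this out in detail. Your explicit verification that $K$ is a stable set---via the identification of $K$-coequivalences with $DK_*$-equivalences and the evident closure of the latter under $\Sigma$---is a point the paper leaves implicit, and it is a genuine hypothesis of Theorem \ref{thm:morita} that is not among the stated assumptions of the corollary, so your care here is warranted.
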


This can be applied to the special case of $A$-torsion and $A$-complete $R$-modules for a perfect $R$-module $A$, obtaining Theorem 2.1 of \cite{DwyGre02}. In this case, we consider $A$-torsion modules $R_A\ch(R)$ and $A$-complete $R$-modules $L_{DA_*}\ch(R)$. Hence we recover Dwyer and Greenlees' result that $A$-torsion and $A$-complete $R$-modules are Quillen equivalent.

We can further specify to the case of $R=\mathbb{Z}$ and $A=(\mathbb{Z}\xrightarrow{\cdot p} \mathbb{Z}) \cong \mathbb{Z}/p$. In this case we obtain that $DA \cong A[1]$. Since $DA_*$-equivalences form a stable set, we recover Dwyer and Greenlees' ``paradoxical'' result that left and right localisation at $\mathbb{Z}/p$ agree.

\section{A correspondence between left and right localisations}\label{sec:correspondence}

We now turn to comparing left and right localisations. We show how given any left localisation, there is a 
corresponding right localisation governing the information of this left localisation and vice versa. 

\begin{lemma}
Let $\C$ be a cellular, proper, stable model category and $S$ be a stable set of maps in $\C$. Now let $T$ be the set of maps
$\ast \to Cs$, where $s \in S$ and $Cs$ is the cofibre of $s$. 
Then $T$ is a stable set of maps and $L_S \C = L_T \C$. 
\end{lemma}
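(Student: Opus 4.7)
My strategy is to prove that the classes of $S$-local and $T$-local objects coincide; this suffices because a left Bousfield localisation is determined by its local objects, so equality of the local classes gives equality of the $S$- and $T$-equivalences and hence $L_S \C = L_T \C$. Once the local classes agree, stability of $T$ is immediate from the hypothesis that $S$ is stable, since $T$-local $=$ $S$-local is then closed under $\Sigma$.

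The inclusion $S$-local $\Rightarrow$ $T$-local is straightforward. For $s \co A \to B$ in $S$, the cofibre triangle $A \to B \to Cs$ in $\Ho(\C)$ induces a fibre sequence of homotopy function complexes
\[
\Map(Cs, Z) \longrightarrow \Map(B, Z) \longrightarrow \Map(A, Z).
\]
If $Z$ is $S$-local, the right-hand map is a weak equivalence, so its homotopy fibre $\Map(Cs, Z)$ is contractible, which is exactly the $T$-local condition.

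The reverse direction, $T$-local $\Rightarrow$ $S$-local, is the main obstacle. A priori the $T$-local condition only supplies $[Cs, Z]^{\C}_n = 0$ for $n \geq 0$, and the LES of the cofibre triangle does not directly yield surjectivity of $[B, Z]^{\C}_0 \to [A, Z]^{\C}_0$ without also knowing $[Cs, Z]^{\C}_{-1} = 0$. The key is that the class of $T$-local objects is automatically closed under $\Omega$: since
\[
\Map(Cs, \Omega Z) \simeq \Omega \Map(Cs, Z) \simeq *,
\]
$\Omega Z$ is $T$-local whenever $Z$ is. Iterating and using the identification $\pi_0 \Map(Cs, \Omega^k Z) \cong [Cs, Z]^{\C}_{-k}$ from Remark \ref{rmk:usehomotopyclasses}, we conclude $[Cs, Z]^{\C}_n = 0$ for every integer $n$. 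The LES of the cofibre triangle then forces $[s, Z]^{\C}_n$ to be an isomorphism for all $n$. Since mapping spaces in a stable model category are infinite loop spaces, an iso on $\pi_n$ for $n \geq 0$ together with bijectivity on $\pi_0$ upgrades to a genuine weak equivalence $\Map(s, Z) \simeq \Map(A,Z) \to \Map(B,Z)$, so $Z$ is $S$-local. Combining both inclusions gives $L_S \C = L_T \C$ and, as noted in the first paragraph, the stability of $T$.
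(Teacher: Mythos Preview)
There is a genuine gap in the reverse implication $T$-local $\Rightarrow$ $S$-local, stemming from an index error. You claim
\[
\pi_0 \Map(Cs, \Omega^k Z) \cong [Cs, Z]^{\C}_{-k},
\]
but the adjunction $\Sigma \dashv \Omega$ gives $[Cs, \Omega^k Z] \cong [\Sigma^k Cs, Z] = [Cs, Z]^{\C}_{k}$, not $[Cs, Z]^{\C}_{-k}$. (Remark~\ref{rmk:usehomotopyclasses} applies $\Omega^k$ to the \emph{first} variable; you have applied it to the second.) Consequently, closure of $T$-local objects under $\Omega$ yields only $[Cs,Z]_m = 0$ for $m \geq 0$, which is exactly what $T$-locality already says, and you never obtain the vanishing of $[Cs,Z]_{-1}$ needed to make $[s,Z]_0$ surjective.

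More tellingly, your argument for this direction never invokes the hypothesis that $S$ is stable. That hypothesis is essential: without it the implication $T$-local $\Rightarrow$ $S$-local can fail. Closure of local objects under $\Omega$ is automatic for \emph{any} set of maps, so it cannot be where stability of $S$ enters. The correct route is to observe that if $Z$ is $T$-local then $\Omega Z$ is in fact $S$-local (not merely $T$-local): the long exact sequence and $[Cs,Z]_m = 0$ for $m \geq 0$ give $[s,Z]_m$ an isomorphism for $m \geq 1$, hence $[s,\Omega Z]_n = [s,Z]_{n+1}$ is an isomorphism for $n \geq 0$. Now the stability of $S$ (closure of $S$-local objects under $\Sigma$) yields that $Z \simeq \Sigma\Omega Z$ is $S$-local. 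This is the content behind the paper's appeal to the long exact sequence together with Remark~\ref{rmk:usehomotopyclasses}, where the graded characterisation of $S$-locality in \emph{all} integer degrees already encodes the stability hypothesis.
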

\begin{proof}
Consider the exact triangle in $\Ho(\C)$
\[
X \stackrel{s}{\longrightarrow} Y \longrightarrow \Sigma Cs \longrightarrow \Sigma X
\]
for $s \in S$. Applying the graded homotopy classes of maps functor 
$[-,Z]^{\C}_*$ gives a long exact sequence. 
Remark \ref{rmk:usehomotopyclasses} now proves the claim.
\end{proof}

One advantage of replacing $S$ by the set $T$ is that we can 
see that the generating cofibrations for $L_S \C$ can be taken to be the set 
$\Lambda T \cup J$ where 
\[
\Lambda T = 
\{ Cs \otimes \partial \Delta[n]_+ \longrightarrow 
Cs  \otimes \Delta[n]_+
\,\,|\,\, n \ge 0, s \in S \}.
\]
We also see that $S$ is monoidal if and only if $T$ is monoidal, 
which might be easier to check in practice. Thus localising at 
$S$ is the same as making the set of objects $Cs$ acyclic. 
This is why left localisations are sometimes known as 
{acyclicisations}. 

Another advantage is that this description of left localisation
illuminates the relation between left and right localisations. 
Let $\C$ be a cellular, proper, stable model category with generating sets $I$ and $J$ 
and let $K$ be a stable set of cofibrant objects of $\C$. Then we can see that the difference
between left and right localising is whether to take $\Lambda K \cup J$ as the 
set of generating acyclic cofibrations or the set of generating cofibrations. 
This is the model category version of choosing to declare a set of objects to be trivial,
or declaring a set of objects to be generators.

\begin{definition}
For a set of maps $S$, define a set of objects 
$K_S = \{Cs \ | \ s \in S \}$. Conversely, given a set of objects $K$
define a set of maps $S_K : = \{ \ast \to k \  | \ k \in K  \}$. 
\end{definition}

Clearly, if $S$ is stable, then so is $K_S$. Similarly, 
if $K$ is stable, so is $S_K$. 
We immediately see that right localising at the set $K_{S_K}$ is the same as 
right localising at the set $K$. Similarly, left localising at $S_{K_S}$
gives the same model category as left localising at $S$.

\begin{proposition}\label{prop:orthogonal}
Choose some stable set of cofibrations $S$ and let $K=K_S$ 
or choose a set of cofibrant objects $K$ and let $S=S_K$.
Assume that $\C$ is stable, proper and cellular. Then there is a 
diagram of Quillen pairs
\[
R_K \C
\lradjunction
\C
\lradjunction
L_S \C
\]
such that the composite adjunction 
$\Ho(R_K \C) \lradjunction \Ho(L_S \C)$
is trivial in the sense that both functors send every object
to $\ast$. 
\end{proposition}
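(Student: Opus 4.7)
The plan is first to verify that both adjunctions are given by the identity functor and are Quillen, then to reduce the triviality of the composite to a single identification linking $S$-locality and $K$-colocality, which follows from the long exact sequence associated to the cofibre triangle of each $s \in S$.

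For the Quillen pairs, cofibrations of $R_K\C$ are defined via a left lifting property, so they form a subclass of cofibrations of $\C$, while the acyclic cofibrations of $R_K\C$ and of $\C$ coincide (both are maps with the left lifting property against fibrations of $\C$). Hence the identity $R_K\C \to \C$ is left Quillen. Dually, the identity $\C \to L_S\C$ is left Quillen because cofibrations agree and weak equivalences of $\C$ are $S$-equivalences. The composite left adjoint is the identity on underlying categories; its total left derived functor sends an object $X$ to a cofibrant replacement in $R_K\C$, namely a $K$-colocal cofibrant-in-$\C$ object, viewed in $L_S\C$. Hence it suffices to show every $K$-colocal object is $S$-acyclic. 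Dually, the composite right adjoint takes $Y$ to an $S$-local fibrant-in-$\C$ object viewed in $R_K\C$, so it suffices to show every $S$-local object $Y$ admits $Y \to \ast$ as a $K$-coequivalence.

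The key observation is that in both cases of the Proposition,
\[
Z \text{ is } S\text{-local} \iff [k,Z]^{\C}_{*} = 0 \text{ for all } k \in K.
\]
When $S = S_K$ this is immediate from the definition of $S_K$-locality, since $\Map_\C(\ast, Z) \simeq \ast$. When $K = K_S$, apply $[-,Z]^{\C}_{*}$ to the cofibre triangle $X \xrightarrow{s} Y \to Cs \to \Sigma X$ in $\Ho(\C)$ (which makes sense by stability of $\C$) to obtain a long exact sequence of graded abelian groups; by Remark \ref{rmk:usehomotopyclasses}, $Z$ is $S$-local iff $[s,Z]^{\C}_{*}$ is an isomorphism for every $s \in S$, which by exactness is equivalent to $[Cs,Z]^{\C}_{*} = 0$ for every $s$, i.e.\ $[k,Z]^{\C}_{*} = 0$ for every $k \in K$. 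An analogous graded-Hom description of $K$-coequivalences and $K$-colocal objects holds, as already noted in the excerpt.

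The two triviality claims now follow. If $X$ is $K$-colocal and $Z$ is any $S$-local object, the key observation yields $[k,Z]^{\C}_{*} = 0 = [k,\ast]^{\C}_{*}$ for all $k \in K$, so $Z \to \ast$ is a $K$-coequivalence. Colocality of $X$ then forces $\Map_\C(X,Z) \to \Map_\C(X,\ast) \simeq \ast$ to be a weak equivalence, so $X$ is $S$-acyclic and $X \cong \ast$ in $\Ho(L_S\C)$. Conversely, if $Y$ is $S$-local then $[k,Y]^{\C}_{*} = 0$ for all $k \in K$, so by the graded-Hom description of $K$-coequivalences the map $Y \to \ast$ is a $K$-coequivalence, giving $Y \cong \ast$ in $\Ho(R_K\C)$. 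Once the graded-Hom descriptions of Remark \ref{rmk:usehomotopyclasses} and its dual are in hand, the argument is essentially formal; the only real content is the exact-triangle characterisation of $S$-locality in the case $K = K_S$, which is a direct consequence of stability.
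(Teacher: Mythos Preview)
Your proof is correct and takes essentially the same approach as the paper, though spelled out in considerably more detail. The paper's proof simply asserts that $K_S$-colocal coincides with $S$-acyclic and $K$-colocal with $S_K$-acyclic ``by construction'', whereas you explicitly establish the key equivalence ($Z$ is $S$-local iff $[k,Z]_*^{\C} = 0$ for all $k \in K$) via the cofibre triangle and then verify both adjoint directions separately.
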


\begin{proof}
Every object in $\Ho(R_K\C)$ is isomorphic to a $K$-colocal object while every object in $\Ho(L_S\C)$ is isomorphic to an $S$-local one. By construction, being $K_S$-colocal is equivalent to being $S$-acyclic and being $K$-colocal is equivalent to being $S_K$-acyclic.
\end{proof}

The above adjunctions give a decomposition of the homotopy category of $\C$ into
two pieces which are \textbf{orthogonal}, in the sense that 
if $A$ is $K$-colocal and $Z$ is $S$-local, 
then $[A,Z]^\C =0$.
More clearly, the $K$-colocal objects
are precisely the $S$-acyclic objects. Similarly, 
the $K$-acyclic objects are exactly the $S$-local objects.

\bigskip
Let us now turn to the subject of chromatic homotopy theory.
A left localisation at a spectrum $E$ is said to be \textbf{finite}
if the class of $E$-acyclic objects is generated, in the sense of triangulated categories, by a set
of finite spectra. 

This is especially interesting in the case of the Johnson-Wilson theories $E(n)$. The Johnson-Wilson theories are Landweber exact modules over $BP$ with
\[
E(n)_* \cong \mathbb{Z}_{(p)}[v_1,...,v_n, v_n^{-1}], \,\,\, |v_i|=2p^i-2,\,\,\,p \,\,\mbox{prime}.
\]
Localisation with respect to $E(n)$ is smashing and is usually denoted by $L_n$ instead of $L_{E(n)}$. These localisations are of great importance to stable homotopy theory as they play a role in major structural results concerning the stable homotopy category such as the Nilpotency Theorem, Periodicity Theorem, Chromatic Convergence Theorem and Thick Subcategory Theorem. Further, $L_1$ equals localisation with respect to $p$-local complex topological $K$-theory whereas $L_2$ is related to elliptic homology theories.
One of the great open conjectures in stable homotopy theory, the \textbf{telescope conjecture} claims that localisation with respect to $E(n)$ is finite in the above sense. 

\begin{rmk}
This conjecture can be put into an even more concrete setting. Ravenel showed in \cite{Rav93} that the only finite localisations of spectra are of the form $L_{L_n^f \mathbb{S}}\Sp$ where $L_n^f \mathbb{S}$ is a finite localisation of the sphere. This is also a smashing localisation.
\end{rmk}

We can restate this in the language right localisations. By Lemma \ref{lem:smashing} we have that 
\[
L_n \Sp = L_\Gamma \Sp \,\,\,\mbox{for} \,\,\,\Gamma = \{ \Sigma^{k} \lambda: \mathbb{S}^k \longrightarrow L_n \mathbb{S}^k \,|\, k \in \mathbb{Z} \}.
\]
By Proposition \ref{prop:orthogonal} the question whether $L_n\Sp$ is finite is now equivalent to the question whether $R_{K_\Gamma}$ is finite. Hence we can now use the tools of right localisation to study the telescope conjecture in future research.


\begin{thebibliography}{EKMM97}

\bibitem[Bou75]{Bou75}
A.~K. Bousfield.
\newblock The localization of spaces with respect to homology.
\newblock {\em Topology}, 14:133--150, 1975.  

\bibitem[Bou79]{Bou79}
A.~K. Bousfield.
\newblock The localization of spectra with respect to homology.
\newblock {\em Topology}, 18(4):257--281, 1979.  

\bibitem[BR11]{BarRoi11b}
D.~Barnes and C.~Roitzheim.
\newblock Local framings.
\newblock {\em New York J. Math.}, 17:513--552, 2011.  

\bibitem[DG02]{DwyGre02}
W.~G. Dwyer and J.~P.~C. Greenlees.
\newblock Complete modules and torsion modules.
\newblock {\em Amer. J. Math.}, 124(1):199--220, 2002.  

\bibitem[Dug01]{Dug01}
D.~Dugger.
\newblock Replacing model categories with simplicial ones.
\newblock {\em Trans. Amer. Math. Soc.}, 353(12):5003--5027 (electronic), 2001.  

\bibitem[Dug06]{Dug06}
D.~Dugger.
\newblock Spectral enrichments of model categories.
\newblock {\em Homology, Homotopy Appl.}, 8(1):1--30 (electronic), 2006.  

\bibitem[EKMM97]{EKMM}
A.~D. Elmendorf, I.~Kriz, M.~A. Mandell, and J.~P. May.
\newblock {\em Rings, modules, and algebras in stable homotopy theory},
  volume~47 of {\em Mathematical Surveys and Monographs}.
\newblock American Mathematical Society, Providence, RI, 1997.
\newblock With an appendix by M. Cole.  

\bibitem[GJ98]{GJ98}
P.~G. Goerss and J.~F. Jardine.
\newblock Localization theories for simplicial presheaves.
\newblock {\em Canad. J. Math.}, 50(5):1048--1089, 1998.  

\bibitem[Hir03]{Hir03}
P.S. Hirschhorn.
\newblock {\em Model categories and their localizations}, volume~99 of {\em
  Mathematical Surveys and Monographs}.
\newblock American Mathematical Society, Providence, RI, 2003.  

\bibitem[Hov99]{Hov99}
M.~Hovey.
\newblock {\em Model categories}, volume~63 of {\em Mathematical Surveys and
  Monographs}.
\newblock American Mathematical Society, Providence, RI, 1999.  

\bibitem[HPS97]{HovPalStr97}
M.~Hovey, J.H. Palmieri, and N.P. Strickland.
\newblock Axiomatic stable homotopy theory.
\newblock {\em Mem. Amer. Math. Soc.}, 128(610):x+114, 1997.  

\bibitem[Len11]{Len12}
F.~Lenhardt.
\newblock Stable frames in model categories.
\newblock {\em J. Pure Appl. Algebra}, 216(5):1080--1091, 2012.  

\bibitem[Rav93]{Rav93}
D.C. Ravenel.
\newblock Life after the telescope conjecture.
\newblock In {\em Algebraic {$K$}-theory and algebraic topology ({L}ake
  {L}ouise, {AB}, 1991)}, volume 407 of {\em NATO Adv. Sci. Inst. Ser. C Math.
  Phys. Sci.}, pages 205--222. Kluwer Acad. Publ., Dordrecht, 1993.  

\bibitem[SS00]{SchShi00}
S.~Schwede and B.~Shipley.
\newblock Algebras and modules in monoidal model categories.
\newblock {\em Proc. London Math. Soc. (3)}, 80(2):491--511, 2000.  

\bibitem[SS03]{SchShi03}
S.~Schwede and B.~Shipley.
\newblock Stable model categories are categories of modules.
\newblock {\em Topology}, 42(1):103--153, 2003.  

\end{thebibliography}
\end{document}